\setlist[itemize]{itemsep=-1mm}
\newtheorem{theorem}{Theorem}
\newtheorem{lemma}[theorem]{Lemma}
\newtheorem{lem}[theorem]{Lemma}
\newtheorem{prop}[theorem]{Proposition}
\newtheorem{proposition}[theorem]{Proposition}
\newtheorem{definition}[theorem]{Definition}
\newtheorem{remark}[theorem]{Remark}
\newtheorem{hypothesis}[theorem]{Hypothesis}
\newcommand{\1}{\mathds{1}}
\setlist{nolistsep}
\newcommand{\Id}{\mathrm{Id}}
\newcommand{\R}{\mathbb{R}}
\newcommand{\N}{\mathbb{N}}
\newcommand{\e}{\varepsilon}
\renewcommand{\epsilon}{\varepsilon}
\newcommand{\eps}{\varepsilon}
\newcommand {\Chi} {{\bf \raise 2pt \hbox{$\chi$}} }
\DeclareMathOperator{\argmin}{argmin}
\DeclareMathOperator{\Real}{Re}
\DeclareMathOperator{\Vol}{Vol}
\newcommand{\supp}{\mathrm{supp}\,}
\Crefname{assumption}{Assumption}{Assumptions}
\Crefname{theorem}{Theorem}{Theorems}
\Crefname{lemma}{Lemma}{Lemmas}
\Crefname{corollary}{Corollary}{Corollaries}
\Crefname{proposition}{Proposition}{Propositions}
\Crefname{theorem}{Theorem}{Theorems}
\Crefname{conjecture}{Conjecture}{Conjectures}
\Crefname{hypothesis}{Hypothesis}{Hypotheses}
\title{Influence of a mortality trade-off on the spreading rate of cane toads fronts}
\date\today                                           
\author{Emeric Bouin\,
\footnote{CEREMADE - Universit\'e Paris-Dauphine, UMR CNRS 7534, PSL Research University, Place du Mar\'echal de Lattre de Tassigny, 75775 Paris Cedex 16, France. E-mail: \texttt{bouin@ceremade.dauphine.fr}}\; 
\and
Matthew H. Chan\footnote{School of Mathematics and Statistics, 
University of Sydney, NSW 2006, Australia. E-mail:
\texttt{M.Chan@maths.usyd.edu.au}}\;
\and
Christopher Henderson\footnote{Corresponding author} \footnote{Department of Mathematics, the University of Chicago, Chicago, IL 60637, E-mail: \texttt{henderson@math.uchicago.edu}}\;
\and
Peter S. Kim \footnote{School of Mathematics and Statistics, University of Sydney, Australia. E-mail:
\texttt{peter.kim@sydney.edu.au}}\;
}
\begin{document}
\maketitle

\begin{abstract}
\noindent We study the influence of a mortality trade-off in a nonlocal reaction-diffusion-mutation equation that we introduce to model the invasion of cane toads in Australia. This model is built off of one that has attracted attention recently, in which the population of toads is structured by a phenotypical trait that governs the spatial diffusion. We are concerned with the case when the diffusivity can take unbounded values and the mortality trade-off depends only on the trait variable. Depending on the rate of increase of the penalization term, we obtain the rate of spreading of the population. We identify two regimes, an acceleration regime when the penalization is weak and a linear spreading regime when the penalization is strong. While the development of the model comes from biological principles, the bulk of the article is dedicated to the mathematical analysis of the model, which is very technical.  The upper and lower bounds are proved via the Li-Yau estimates of the fundamental solution of the heat equation with potential on Riemannian manifolds and a moving ball technique, respectively, and the travelling waves by a Leray-Schauder fixed point argument.  We also present a simple method for {\em a priori} $L^\infty$ bounds.
\end{abstract}
\noindent{\bf Key-Words:} {Structured populations, reaction-diffusion equations, front acceleration}\\
\noindent{\bf AMS Class. No:} {35Q92, 45K05, 35C07}

%
\section{Introduction}
\subsection{Model and biological background}

The invasion of cane toads in Australia has singular features that are much different from standard spreading observed in most other
species. Data from field biologists \cite{phillips2006invasion, Shine} show that the invasion speed has steadily increased 
during the eighty years since the toads were introduced in Australia.  Moreover, they found that toads at the edge of the front have much longer legs.  This is just one example of a non-uniform space-trait 
distribution (see also the expansion of bush crickets in Britain~\cite{Thomas}). The current biological literature now states that this is due to "spatial sorting": the offspring produced at the edge of the front appear to have higher mean dispersal rate. 

It has been demonstrated by biologists that increased dispersal is often associated with reduced investment in reproduction, for example in populations of the peckled wood butterfly, \textit{Pararge aegeria} \cite{Hughes}. Some physiological description of this trade-off between dispersal and fecundity has been reported in \cite{Mole}. There, two morphs of the cricket \textit{Gryllus rubens} (Orthoptera, Gryllidae) are studied: a fully-winged (flight-capable morph) and a short-winged morph (that cannot fly).  It turns out that the short-winged morph is substantially more fecund than the fully-winged one. This widespread occurrence of dispersal polymorphisms among insects is consistent with the fact that fitness costs are associated with flight capability. It is now well documented that for both males and females (for example for the planthopper \textit{Prokelisia dolus}) there is a strong trade-off between flight capability and reproduction \cite{Langellotto}. See also \cite{Guerra} where the physiological differences between the male and female of two morphs of crickets that may lead to such a trade-off are discussed. We refer to \cite{Bonte} for an extensive review on the different cost types that occur during dispersal in a wide array of organisms, ranging from micro-organisms to plants and invertebrates to vertebrates.


In view of these biological issues, we are interested in the influence of a mortality trade-off on the rate of spreading of a structured population.  Namely, our goal is to estimate of the effect of this penalization; that is, depending on the strength of the trade-off, does the population go extinct or still propagate, and in the latter case, what is the effect on the acceleration seen in~\cite{BerestyckiMouhotRaoul,BHR_Acceleration}?

To answer these questions, we focus on a cane toads equation with mortality trade-off. This is a nonlocal reaction-diffusion-mutation equation that is a refinement of the now standard cane-toad equation proposed in~\cite{BenichouEtAl} and investigated in~\cite{BerestyckiMouhotRaoul,BouinCalvez,BHR_LogDelay,BHR_Acceleration,Turanova} (see also \cite{ArnoldDesvillettesPrevost, BouinMirrahimi, ChampagnatMeleard, LamLou, 
PerthameSouganidis} for similar studies).  We now introduce this model. 
The population density $n$ is structured by a spatial variable, $x\in \R$, and a 
motility variable $\theta\in \overline\Theta$, where $\Theta\stackrel{\rm def}{=}(\underline\theta, \infty)$, with a  fixed   $\underline\theta>0$. The spatial diffusivity is  exactly $\theta$, representing the effect of the variable motility on the spreading rates of the species.
This population may reproduce, with a free growth rate $r$, in such a way that a parent gives its trait to its offspring up to phenotypical mutations (that is, mutations on the motility variable), that are modelled here with a diffusion in the trait variable $\theta$, with a variance $\sigma^2$. In addition, each toad competes locally in space with all other individuals for resources. This introduces the nonlocality in the model. Finally, and this is the specificity of this paper, we take into account a mortality trade-off, denoted by $m$, that penalizes high traits for reproduction. We are thus led to considering the following problem:
\begin{equation*}
\begin{cases}
n_t = \theta n_{xx} + \alpha n_{\theta\theta} + r n \left( 1 - m(\theta) - \rho\right), &\qquad (t,x,\theta) \in \R^+ \times \R \times \Theta,\\
n_\theta(t,x,\underline\theta) = 0, & \qquad (t,x) \in \R^+ \times \R. 
\end{cases}
\end{equation*}
where the total population at time $t$ and position $x$ is
\begin{equation*}\label{eq:rho}
\rho(t,x)=\int_{\underline\theta}^\infty n(t,x,\theta)d\theta.
\end{equation*} 
The trait diffusivity is $\alpha = r \sigma^2$. The equation is complemented with Neumann boundary conditions at $\theta = \underline\theta$, since lower traits should not be reachable. After a suitable rescaling, one can reduce the problem to studying the equation with $\alpha = 1$ and $r=1$.  The resulting equations are
\begin{equation}\label{eq:main}
\begin{cases}
n_t = \theta n_{xx} + n_{\theta\theta} +  n \left( 1 - m(\theta) - \rho\right), &\qquad (t,x,\theta) \in \R^+ \times \R \times \Theta,\\
n_\theta(t,x,\underline\theta) = 0, & \qquad (t,x) \in \R^+ \times \R. 
\end{cases}
\end{equation}
We now describe the class of trade-off terms $m$ that we consider in this paper.
\begin{hypothesis}\label{hyp:m} We assume that $m$ depends only on $\theta$, that $m(\underline\theta) = 0$ and that $m \in \mathcal{C}^2(\overline \Theta)$ increases to $+\infty$ as $\theta$ tends to $\infty$. Moreover, we suppose that $\lim_{\theta\to\infty} m(\theta)/\theta$ exists and is an element of $\R^+\cup\{+\infty\}$ and that if $m(\theta)/\theta$ tends to zero as $\theta$ tends to $+\infty$, then $m'' \in L^\infty\left(\overline\Theta\right)$ and there exists $\theta_d > \underline\theta$ such that $m(\theta)/\theta$ is decreasing for all $\theta \geq \theta_d$.
\end{hypothesis}
Importantly, we point out that the fact that $m$ tends to infinity gives a positive growth rate to only small values of $\theta$. For the entirety of this work, we assume that $m$ always satisfies~\Cref{hyp:m}.  An important class of examples of $m$ satisfying~\Cref{hyp:m} are $m(\theta) = C(\theta^p - \underline\theta^p)$ for $C, p > 0$.

Our main question for the rest of this work is, if $n(0,\cdot) \equiv n_0$ is a nonzero, nonnegative initial condition such that there exists $C_0 > 0$ such that 
\begin{equation}\label{eq:n_0}
		n_0 \leq C_0 \1_{[\underline\theta, \underline\theta+C_0]\times[-\infty,C_0]},
\end{equation}
then at what speed does the population $n$ propagate?

A related model for a host-parasite system with mortality trade-off has been discussed numerically and formally by Chan \textit{et al.}~in \cite{Chan}. The results there confirm  observations made in empirical and agent-based studies that spatial sorting can still occur with a disadvantage in reproductivity and/or survival in more motile individuals. Moreover, we find that such a disadvantage in reproductivity and/or survival is unlikely to be large if spatial sorting is to have a noticeable effect on the rate of range expansion, as it has been observed to have over the last 60 years in northern Australia. The results of the present paper prove these findings and enlighten them quantitatively.

\subsection{Heuristics and main results}

\subsubsection*{A condition for non extinction} 

In order to expect propagation at any speed, the average growth rate $\gamma_\infty$ in time should be positive. This is necessary to expect any positive steady state at the back of a traveling front. In other words, letting $Q$ and $\gamma_\infty$ be the principal eigenfunction and eigenvalue of the linearized equation
\begin{equation}\label{eq:specQ}
\begin{cases}
Q'' +  (1 - m) \, Q  = \gamma_\infty Q, \qquad \text{on } \Theta,\\
Q'\left(\underline \theta \right) = 0,~~~ Q > 0,
\end{cases}
\end{equation}
we expect propagation {\em only} in the case that $\gamma_\infty>0$.  Notice that, since $m \geq 0$ and $m\not\equiv 0$, $\gamma_\infty < 1$.  The sign of $\gamma_\infty$ does not depend strongly on the growth of $m$ at $\infty$, but on having sufficiently many traits with sufficiently large growth rates\footnote{Indeed, consider the following simple example.  When $m(\theta) = \sigma^2 (\theta-\underline\theta)^2$ for any $\sigma>0$, one can find the principal eigenfunction and eigenvalue explicitly: $Q(\theta) = \exp\{- \sigma(\theta-\underline\theta)^2/4\}$ and $\gamma_\infty = 1-\sigma$.  Though, for any value of $\sigma>0$, $m$ has quadratic growth, $\gamma_\infty$ can be positive or negative depending on $\sigma$.}.  Our expectation above is confirmed by the following:

\begin{prop}\label{prop:extinction}
Suppose that $\gamma_\infty \leq 0$ and $\supp(n_0)$ is compact.  Then
\begin{equation*}
\lim_{t \to \infty} \sup_{(x,\theta)\in\R\times \Theta} n(t,x,\theta) = 0.
\end{equation*}
\end{prop}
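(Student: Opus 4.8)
My plan is to compare $n$ with an explicit supersolution built from the principal eigenfunction $Q$ of~\eqref{eq:specQ}. Since $n\ge 0$, the competition term only helps, so $n$ is a subsolution of the \emph{linear} equation $u_t=\theta u_{xx}+u_{\theta\theta}+(1-m(\theta))u$ with the same Neumann condition at $\underline\theta$. Because $m\to+\infty$, the operator $-\partial_{\theta\theta}+(m-1)$ on $\Theta$ is confining, so $Q$ is positive, bounded, and decays to $0$ as $\theta\to\infty$; in particular $\|Q\|_\infty<\infty$ and $Q$ is bounded below by a positive constant on the compact set $\supp n_0$. Taking $C:=\|n_0\|_\infty/\min_{\supp n_0}Q$, the function $\bar n(t,x,\theta):=Ce^{\gamma_\infty t}Q(\theta)$ satisfies $\bar n(0,\cdot)\ge n_0$, solves the linear equation exactly (by~\eqref{eq:specQ}), and respects the Neumann condition since $Q'(\underline\theta)=0$. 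The comparison principle then gives $0\le n\le Ce^{\gamma_\infty t}Q(\theta)$; when $\gamma_\infty<0$ this already yields uniform exponential decay, and for all $\gamma_\infty\le 0$ it provides the a priori bound $n(t,x,\theta)\le CQ(\theta)$, hence $\rho\le C\|Q\|_{L^1}$ and Gaussian-type decay of $n$ in $x$ by comparison with the linear equation.

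\textbf{The critical case $\gamma_\infty=0$.} Here the previous bound no longer decays and I would exploit diffusion in $x$ via the ground-state transform. Setting $v:=n/Q$ (licit since $Q>0$) and using~\eqref{eq:specQ} to cancel the zeroth-order term, $v$ satisfies
\begin{equation*}
v_t \;=\; \theta v_{xx}+\frac{1}{Q^2}\bigl(Q^2v_\theta\bigr)_\theta+(\gamma_\infty-\rho)\,v \;\le\; \theta v_{xx}+\frac{1}{Q^2}\bigl(Q^2v_\theta\bigr)_\theta,
\end{equation*}
with Neumann condition $v_\theta(t,x,\underline\theta)=0$ and $v(0,\cdot)=n_0/Q$ bounded and compactly supported in $x$; thus $v$ is a bounded nonnegative subsolution of a \emph{reaction-free} drift–diffusion equation whose natural reference measure is $d\mu:=Q^2\,d\theta\,dx$, for which the $\theta$-boundary terms vanish (Neumann at $\underline\theta$, super-exponential decay of $Q^2$ at $+\infty$). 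An energy estimate in $L^2(d\mu)$, using only the Gaussian-in-$x$ decay of $v$ to justify the integrations by parts, then gives $\tfrac{d}{dt}\|v\|_{L^2(d\mu)}^2\le -c\,\|\nabla_{x,\theta}v\|_{L^2(d\mu)}^2$ while $\|v\|_{L^1(d\mu)}$ is non-increasing. Since $\mu$ has finite mass in $\theta$ and the $x$-diffusion is uniformly elliptic ($\theta\ge\underline\theta>0$), the dispersion is effectively one-dimensional: a Nash-type inequality adapted to $(\R_x\times\Theta,d\mu)$ combined with the dissipation gives algebraic decay of $\|v(t)\|_{L^2(d\mu)}$, which I would bootstrap to $\|v(t)\|_{L^\infty}\to 0$ by a Moser/Nash iteration. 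Then $\|n(t,\cdot)\|_\infty\le\|Q\|_\infty\|v(t)\|_\infty\to 0$, as desired.

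\textbf{Main obstacle.} The delicate point is the last decay estimate for $v$: the coefficient $\theta$ in front of $v_{xx}$ is unbounded and the drift $2(\log Q)'(\theta)$ is singular, tending to $-\infty$ at $\theta=\infty$. Both features are, however, favorable — $\theta$ is bounded below by $\underline\theta$, and the drift points toward $\underline\theta$, so no mass leaks to $\theta=\infty$ — so I expect the required Nash inequality for this specific operator, and Nash's iteration, to go through; this is where the real work lies. If one prefers to avoid the iteration, the a priori bound $n\le CQ(\theta)$ gives $\sup_{x,\,\theta\ge R}n(t,\cdot)\le C\sup_{\theta\ge R}Q\to 0$ uniformly in $t$ as $R\to\infty$, reducing matters to proving decay on each strip $\R\times(\underline\theta,R)$, where all coefficients are bounded and classical parabolic estimates (local boundedness plus heat-kernel spreading in $x$) apply, at the price of tracking the small flux through $\theta=R$ and then letting $t\to\infty$ followed by $R\to\infty$.
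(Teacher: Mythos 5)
Your treatment of $\gamma_\infty<0$ coincides with the paper's: compare $n$ with $A\,Q(\theta)e^{\gamma_\infty t}$ and invoke the comparison principle for the linearized equation. The divergence, and the gap, is in the critical case $\gamma_\infty=0$. There you discard the nonlocal absorption term ($-\rho v\le 0$) and try to conclude from \emph{linear} dispersion alone, via a Nash-type inequality and Moser iteration in the weighted spaces $L^p(Q^2\,dx\,d\theta)$. That single step carries essentially all of the difficulty and is not a routine citation: after the ground-state transform, the fiber generator $\tfrac{1}{Q^2}(Q^2 v_\theta)_\theta$ has finite invariant measure but its semigroup is in general \emph{not} ultracontractive (already for $m$ of quadratic growth it is of Ornstein--Uhlenbeck type, and \Cref{hyp:m} allows much slower growth), so no weighted Nash inequality can yield unweighted $L^\infty$ control uniformly in $\theta$, and the iteration as you state it does not close. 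Your fallback — prove decay on strips $\R\times(\underline\theta,R)$ and use $n\le CQ$ for $\theta\ge R$ — is the right reduction (the paper also observes that uniform smallness of $v$ for bounded $\theta$ suffices), but ``tracking the small flux through $\theta=R$'' is itself delicate: the zeroth-order coefficient $1-m$ is positive near $\underline\theta$, so an $O(Q(R))$ influx at $\theta=R$ can be amplified, and one needs a quantitative comparison with the principal eigenvalue of the truncated problem, which tends to $\gamma_\infty=0$ as $R\to\infty$. Tellingly, the authors remark right after their proof that deriving a dedicated Nash-type inequality for this operator would give decay rates but is ``out of the scope of the present paper'' — your route is precisely the one they chose to avoid.

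The paper closes the critical case much more softly by keeping the nonlinear term you threw away. Writing $v=n/Q$, they obtain the energy identity~\eqref{eq:energy}, in which the dissipation contains the term $\int Q^2v^2\rho\,dx\,d\theta$, and argue by contradiction: if $v(t_n,x_n,\theta_n)\ge 2\epsilon$ along a sequence $t_n\to\infty$ with $\theta_n$ bounded, parabolic regularity together with the uniform bound of \Cref{prop:uniform_upper_bound} forces $v\ge\epsilon$ on space--time boxes of a fixed size $\gamma$, hence $\rho\gtrsim\gamma\epsilon$ there, so the time-integrated quantity $\int_0^\infty\!\int Q^2v^2\rho\,dx\,d\theta\,dt$ is infinite, contradicting the finiteness of the initial energy $\int Q^2v_0^2$. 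No dispersion estimate, spectral gap, hypocoercivity, or Nash inequality is needed. To salvage your argument you would have to actually prove the decay estimate for the linear operator $\theta\partial_{xx}+\tfrac{1}{Q^2}\partial_\theta(Q^2\partial_\theta\,\cdot\,)$ on bounded-$\theta$ regions (or the corresponding weighted Nash inequality); as it stands, that key step is asserted rather than proven, so the critical case is not yet established.
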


We also note that, when $Q$ is normalized with $\int_{\Theta} Q(\theta) \, d\theta = \gamma_\infty$ and $\gamma_\infty > 0$, this eigenvector is expected to be the limit of the population density behind the front.

\subsubsection*{The linear propagation regime}

In the case of non-extinction, i.e.~when $\gamma_\infty > 0$, one may expect propagation of the initial population.  A first attempt in this direction is to look for travelling waves. Since the problem is of Fisher-KPP type, we may expect at first glance that any travelling front is a pulled front. As a consequence, in order to compute the possible speed of propagation of such a front, we follow the classical strategy by linearizing \eqref{eq:main} around $0$ and looking for solutions of the form $e^{-\lambda(x-c_\lambda t)} Q_\lambda(\theta)$.  For any given $\lambda > 0$,  we apply the Krein-Rutman theorem to solve the spectral problem
\begin{equation}\label{eq:specQlambda}
\begin{cases}
		Q_\lambda'' + \left[  \lambda^2 \theta - \lambda c_\lambda + (1 -  m(\theta)) \right] Q_\lambda  = 0, &\qquad \theta \in \Theta, \\
Q_\lambda' \left( \underline \theta \right) = 0, \qquad Q_\lambda > 0&
\end{cases}
\end{equation}
when $m$ increases sufficiently quickly. 
%
%
%
This leads to the following theorem. 
\begin{theorem}\label{prop:tw}
Suppose that $m$ satisfies \Cref{hyp:m}, that $\gamma_\infty > 0$ and that $\lim_{\theta \to \infty} m(\theta)/\theta$ is positive. Then \eqref{eq:main} admits a travelling wave solution $(c^*,\mu)$, with $c^*$ defined in \Cref{sec:tw}.  In other words $n(t,x,\theta) = \mu(x-c^*t,\theta)$ solves~\eqref{eq:main}, with $c^* >0$, and
	\begin{equation}\label{eq:limits_of_wave}
		\liminf_{\xi\to-\infty} \mu(\xi,\underline\theta)
			> 0
			\qquad\text{ and }\qquad
			\lim_{\xi\to\infty} \sup_{\theta\in\Theta} \mu(\xi,\theta) = 0.
	\end{equation}
\end{theorem}

As with the standard Fisher-KPP equation, we expect this speed $c^*$ to be the minimal speed in the sense that if $c \geq c^*$, then there is a travelling wave of speed $c$.  Since this is not our main interest, we do not address it here.

Our main interest is in a spreading result for the Cauchy problem \eqref{eq:main}. We thus ask whether the travelling wave constructed in \Cref{prop:tw} is stable.  This is answered by the following theorem.
\begin{theorem}\label{thm:cauchy_finite}
	Suppose the conditions of \Cref{prop:tw} hold. Suppose that $n$ solves~\eqref{eq:main} with initial conditions satisfying~\eqref{eq:n_0}.  Then there exists $\underline n>0$ such that for every $\epsilon >0$, we have
	\[
		\liminf_{t\to\infty} \inf_{|x|\leq (c^*-\epsilon)t} n(t,x,\underline\theta) \geq \underline n,
			\qquad\text{ and }\qquad
			\lim_{t\to\infty}\sup_{x\geq (c^* + \epsilon)t} \sup_{\theta \in\Theta} n(t,x,\theta) = 0.
	\]
\end{theorem}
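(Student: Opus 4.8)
The plan is to prove Theorem~\ref{thm:cauchy_finite} in two halves: the upper bound (no propagation faster than $c^*$) and the lower bound (spreading at least at speed $c^* - \epsilon$), treating the upper bound first since it is softer.

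\textbf{The upper bound.} Since the nonlinearity satisfies $n(1 - m(\theta) - \rho) \le n(1 - m(\theta))$, any solution of~\eqref{eq:main} is a subsolution of the linearized equation $\bar n_t = \theta \bar n_{xx} + \bar n_{\theta\theta} + (1 - m(\theta))\bar n$. I would build an explicit supersolution of this linear equation of the form $\bar n(t,x,\theta) = A e^{-\mu(x - c^* t)} Q_\mu(\theta)$, where $(\mu, Q_\mu)$ is the minimizing pair from the travelling-wave construction in \Cref{sec:tw}, i.e.\ $c^* = c_\mu = \inf_{\lambda>0} c_\lambda$, so that $\bar n$ is an exact solution of the linear equation. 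Choosing $A$ large enough (using the compact-support assumption~\eqref{eq:n_0} and a lower bound $Q_\mu \ge c_0 > 0$ on the relevant trait range, which one gets since $Q_\mu$ solves a Sturm--Liouville problem and is positive) ensures $n_0 \le \bar n(0,\cdot,\cdot)$. The comparison principle for~\eqref{eq:main} — which must be established, or cited from the earlier parts of the paper — then gives $n \le \bar n$. For $x \ge (c^*+\epsilon)t$ one has $\bar n(t,x,\theta) \le A e^{-\mu \epsilon t} Q_\mu(\theta)$, and since $\sup_\theta Q_\mu(\theta) < \infty$ (because $m \to \infty$ forces $Q_\mu$ to decay in $\theta$), this tends to zero uniformly, which is exactly the second limit in the theorem.

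\textbf{The lower bound.} This is the genuinely hard part. The strategy is the standard one for KPP-type spreading à la Aronson--Weinberger or Fife--McLeod, but adapted to the nonlocal, trait-structured setting. First I would establish a uniform-in-time a priori bound $\rho(t,x) \le M$ for some $M$ (so the competition term does not blow up); this follows by comparison with the spatially homogeneous ODE/PDE obtained by dropping the $x$-diffusion and using $m \ge 0$. Then, on the region well behind the front, one wants to show $n(t,x,\underline\theta)$ stays bounded below. The key idea is to use the travelling wave $\mu$ itself as a moving subsolution: for the nonlocal equation, $\mu(x - c^* t, \theta)$ is only a supersolution of the linearized problem, so instead I would construct a subsolution by truncating and shifting — take something like $\max\{0,\ \delta(\mu(x - (c^*-\epsilon)t + \text{(shift)}, \theta) - \eta e^{-\kappa(x-\cdots)})\}$ — and verify it is a subsolution of~\eqref{eq:main} once $\delta$ is small enough that the competition term $\rho \le M$ times the constant is dominated. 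One then propagates positivity from the initial data (which is bounded below near $\theta = \underline\theta$ on a ball, up to replacing $n_0$ by a smaller compactly supported bump and using the strong maximum principle / parabolic regularization to make it positive and smooth after a short time) and slides the subsolution to cover $|x| \le (c^*-\epsilon)t$.

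\textbf{Main obstacle.} The delicate point is that the trait variable $\theta$ is unbounded and the diffusivity $\theta n_{xx}$ is unbounded in $\theta$, so the usual compactness and comparison arguments need care: one must control the mass at large $\theta$ (where $m(\theta)$ is large and kills reproduction but the spatial spreading is fast) and ensure the subsolution has the right behaviour as $\theta \to \infty$, matching the decay of $Q_\mu$. Concretely, the subsolution must be built from $Q_\mu$-type profiles with the correct exponential-in-$x$ and super-exponential-in-$\theta$ decay, and checking the differential inequality near the Neumann boundary $\theta = \underline\theta$ and in the tail $\theta \to \infty$ simultaneously is where the technical work concentrates. A secondary obstacle is justifying the comparison principle itself for the nonlocal operator — standard but requiring the uniform bound on $\rho$ and a Gronwall-type argument on the difference of two solutions.
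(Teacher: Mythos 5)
Your upper bound is essentially workable, but your lower bound has a genuine gap: you never control the nonlocal term $\rho$ by $n$ itself, and without that no comparison argument can close. Equation \eqref{eq:main} has no comparison principle, and at a putative touching point between your truncated travelling-wave subsolution and $n$, the inequality you must verify involves $\rho(t,x)=\int n(t,x,\theta)\,d\theta$ — the mass of the \emph{actual solution} — not the mass of your subsolution; taking $\delta$ small shrinks the latter but says nothing about the former. The uniform bound $\rho\le M$ is not sufficient either: the constant $M$ of \Cref{prop:uniform_upper_bound} need not be less than $1$, so the only local inequality it yields, $n_t\ge\theta n_{xx}+n_{\theta\theta}+n(1-m-M)$, may have strictly negative growth rate and admits no propagating subsolution. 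The missing ingredient is precisely \Cref{lem:harnack}: for bounded traits, $\rho(t,x)\le C\, n(t,x,\theta)+\epsilon$ (combined with the tail estimate in $\theta$), which converts \eqref{eq:main} into a \emph{local} KPP-type inequality $n_t\ge\theta n_{xx}+n_{\theta\theta}+n(1-\epsilon_b-m-C_b n)$; only then can a front-like subsolution be slid under $n$. This is how the paper proceeds: it takes the slab travelling wave for the problem with reduced growth rate $\alpha<1$ (whose speed can be pushed into $(c,c^*]$), rescales it by $A^{-1}$ so the quadratic saturation is absorbed, compares with the local inequality above, and finishes with a ``waiting'' step to make $\underline n$ independent of the slab parameters. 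Relatedly, your proposed derivation of $\rho\le M$ (dropping the $x$-diffusion and comparing) fails for the same reason — no maximum principle for the nonlocal equation — and because the linearization grows like $e^{\gamma_\infty t}$; the bound is exactly \Cref{prop:uniform_upper_bound}, proved in \Cref{sec:apriori} by parabolic regularity and interpolation, and you should simply invoke it.

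On the upper bound: your exponential supersolution $Ae^{-\lambda(x-c_\lambda t)}Q_\lambda(\theta)$ is a legitimate and in fact more elementary route than the paper's (which builds a travelling wave for the problem with growth rate $\alpha>1$ and uses it, again via \Cref{lem:harnack}, as a supersolution), but as written it relies on the minimizing pair $(\lambda^*,Q^*)$ existing. In the critical case $\lim_{\theta\to\infty}m(\theta)/\theta$ equal to a positive constant — which is included in \Cref{prop:tw} and hence in the theorem — the spectral problem is solvable only for $\lambda$ below a threshold and $c^*=\inf_\lambda c_\lambda$ need not be attained (see \Cref{sec:tw}). The fix is easy: given $\epsilon>0$, pick $\lambda_0$ with $c_{\lambda_0}<c^*+\epsilon/2$ and use $Ae^{-\lambda_0(x-c_{\lambda_0}t)}Q_{\lambda_0}(\theta)$, which is an exact solution of the linearized equation and, with $\|Q_{\lambda_0}\|_\infty=1$ and \eqref{eq:n_0} (note the support is $\{x\le C_0\}$, not compact in $x$, which is harmless since $e^{-\lambda_0 x}$ blows up as $x\to-\infty$), dominates $n_0$ for $A$ large; you should also say a word about why the comparison principle for the linear problem holds despite the unbounded diffusivity and unbounded $m$ (the tail bounds of \Cref{sec:apriori} provide the needed growth control).
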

This type of result is standard going back to~\cite{AronsonWeinberger2} in the local Fisher-KPP setting. Since the dynamics of the solution are so complicated, it would be interesting to obtain more precise estimates on the propagation speed.  We expect a logarithmic delay {\em a la} Bramson, see~\cite{BHR_LogDelay} for the delay in the cane toads equation and references therein for more general settings.

We briefly outline the proofs of \Cref{prop:tw} and \Cref{thm:cauchy_finite} and the main difficulties that we have to face.  The construction of a travelling wave solution with minimal speed of \Cref{prop:tw} is done by building a solution to an approximate problem on a finite ``slab'' by a degree theory fixed point argument. This construction appears to be a non-trivial extension of the one for the cane toads equation with bounded traits \cite{BouinCalvez}. In particular, our proof differs from the usual procedure in \cite{BouinCalvez} because we have both unbounded diffusivity and unbounded growth rates. In this direction, we also point out connections to \cite{AlfaroBerestyckiRaoul,AlfaroCovilleRaoul,BerestyckiJinSilvestre},  where travelling waves for structured models were constructed.  The proof of the spreading result in \Cref{thm:cauchy_finite} proceeds as follows. We directly construct a super-solution of $n$ using~\eqref{eq:specQlambda}, which provides the upper bound. 
The lower bound follows by building a solution to a related problem on a moving ball to using the intermediate steps of the construction of the travelling wave and applying a local-in-time Harnack inequality to compare this to $n$. This is a simpler version of the kind of procedure described in greater length in the next section.

The proof of \Cref{prop:tw} is in \Cref{sec:tw} and the proof of \Cref{thm:cauchy_finite} is in \Cref{sec:cauchy_finite}.

\subsubsection*{The acceleration regime}

The condition for the existence of travelling waves may be roughly re-written as $m$ is at least linear. On the other hand, when $m$ is sub-linear and $\gamma_\infty>0$, we still expect propagation. Since the spectral problem \eqref{eq:specQlambda} is not solvable, we may expect an acceleration phenomenon exactly as for the cane toads equation \cite{BCMetal,BHR_Acceleration}.  Before stating our main result, we give some heuristics that make it appear naturally.  Returning to the linearized equation  
\begin{equation}\label{eq:linearized}
	\overline n_t = \theta \overline n_{xx} + \overline n_{\theta\theta} + \overline n(1 - m(\theta))
\end{equation}
and recalling the definition of the pair $(\gamma_\infty,Q)$, we see that a function of the form $\overline{n} \propto Q(\theta) e^{\gamma_\infty t}$ solves \eqref{eq:linearized}. This gives a first bound of spreading in the direction $\theta$. Indeed, by writing $\psi = -\log(Q)$, one may check that $\psi$ satisfies $-\psi'' + |\psi'|^2 + (1-m) = \gamma_\infty$ and hence%
\footnote{To see the lower bound: let $R = \psi'$. Then $R$ satisfies $R' = R^2 + \left( 1 - \gamma_\infty - m \right)$ for $\theta > \underline{\theta}$ and $R(\underline{\theta}) = 0$. For $\theta \leq m^{-1}(1-\gamma_\infty)$, $R$ is increasing, and thus positive. For $\theta > m^{-1}(1-\gamma_\infty)$, $R$ satisfies $R' = R^2 - R_0^2$, with $R_0 = \left(m - \left(1 - \gamma_\infty\right)\right)^\frac12$. One can see that $R$ can not cross the curve $R_0$. Indeed, if it did, $R$ would become decreasing with $R'$ tending to $-\infty$, and thus negative for sufficiently large $\theta$, which is impossible since $Q$ is integrable. We thus get that $\psi(\theta) \geq \int_{\underline{\theta}}^\theta \left[\max\left( 0 , m(s) - \left(1 - \gamma_\infty\right) \right) \right]^\frac12 \, ds$. %
To see the upper bound: fix $\epsilon>0$, write $R = \mu\sqrt{m}$, and observe $\mu' \sqrt m = (\mu^2 -1)m + 1- \gamma_\infty - \mu m'/(2\sqrt m)\geq (\mu^2-1)m + 1-\gamma_\infty - \epsilon \mu^2 - C_\epsilon(m'/\sqrt m)^2$.  Choose $\theta_\epsilon$ such that, if $\theta \geq \theta_\e$, then $C_\epsilon (m'/\sqrt m)^2 < 1-\gamma_\infty$ and $m > 2(1+\epsilon)$.  Then if, for any $\theta \geq \theta_\epsilon$, $\mu(\theta) \geq \sqrt{1+\epsilon}$, after some computation we see that $\mu' \geq \epsilon \mu^2 \sqrt{m} / (2(1+\epsilon))$ and $\mu$ blows up at a some $\theta_b> \theta_\epsilon$.  This implies that $Q$ has an interior zero at $\theta_b$.  This is a contradiction, implying that $\mu \leq \sqrt{1+\epsilon}$ for all $\theta \geq \theta_\e$.  The estimate follows.},
as $\theta\to\infty$,

\begin{equation}\label{eq:Q_asymptotics}
	\psi(\theta) = \int_{\underline{\theta}}^\theta \sqrt{m(s)}~ds + o(\theta \sqrt{m(\theta)}).
\end{equation}
%
It is thus natural to define what corresponds to a spreading rate in the direction $\theta$ as follows.

\begin{definition}
For $\theta \in \overline\Theta$, define $\Phi(\theta) = \int_{\underline{\theta}}^\theta \sqrt{m(s)}ds$.  Fix any time $t>0$, any constant $a>0$, and define $\eta_a(t) \in \overline{\Theta}$ to be the unique solution of $\Phi(\eta_a(t)) = at$. When $a=1$, we denote $\eta(t) = \eta_1(t)$.
\end{definition}

Returning to $\overline n$, above, we see that, heuristically, the spreading in $\theta$ should be at least $O(\eta(t))$ since this is where the decay in $Q$ balances the growth of $e^{\gamma_\infty t}$.

Importantly, when $m(\theta)/\theta$ tends to $0$, there exists $D_m$ such that if $\theta$ is sufficiently large then
\begin{equation}\label{eq:phi_to_m}
	\Phi(\theta)
		\leq  \theta \sqrt{m(\theta)}
		\leq D_m \Phi(\theta).
\end{equation}		
Due to~\eqref{eq:phi_to_m}, we find a constant $C_a$ such that when $t$ is sufficiently large then
\begin{equation}\label{eq:etaa_to_eta1}
		C_a^{-1} \eta(t)
			\leq \eta_a(t)
			\leq C_a \eta(t).
\end{equation}
We note that, in the proof of \Cref{thm:acceleration}, it is more convenient to deal with the family $\eta_a(t)$ than to look at the scalar multiples of $\eta(t)$.  Due to~\eqref{eq:etaa_to_eta1}, these approaches are equivalent.

Now, heuristically, knowing the natural scaling between space and trait variables for the standard cane toads equation, we expect a propagation in space to be $O(\eta(t)^\frac{3}{2})$ since we expect propagation in trait to be $O(\eta(t))$. Our next result, and the main focus of this work, confirms these heuristics.

\begin{theorem}\label{thm:acceleration}
	Suppose that $m$ satisfies \Cref{hyp:m}, $\gamma_\infty > 0$,  $m(\theta)/\theta$ tends to zero as $\theta$ tends to $+\infty$, and $n$ satisfies~\eqref{eq:main}-\eqref{eq:n_0}.  Then there exist positive constants $\underline n$, $\underline c$, and $\overline c$ such that
	\[
		\liminf_{t \to \infty} \inf_{|x| \leq \underline c\eta(t)^{3/2}} n(t,x,\underline\theta) \geq \underline n
			\qquad\text{ and }\qquad
			\lim_{t\to \infty} \sup_{x\geq \overline{c} \eta(t)^{3/2}} \sup_{\theta \in \Theta} \ n(t,x,\theta) = 0.
	\]
\end{theorem}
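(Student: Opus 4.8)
The plan is to prove the two halves of \Cref{thm:acceleration} separately, constructing a super-solution for the upper bound and a sub-solution on a moving ball for the lower bound, following the heuristic that space scales like $\theta$ raised to the $3/2$ power and trait scales like $\eta(t)$.

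For the upper bound, I would first control the spreading in the trait direction. Using the pair $(\gamma_\infty, Q)$ from \eqref{eq:specQ} and the estimate \eqref{eq:Q_asymptotics}, one sees that $\overline n(t,x,\theta) = C Q(\theta) e^{\gamma_\infty t}$ is a super-solution to the linearized equation \eqref{eq:linearized}, so (after checking the initial condition \eqref{eq:n_0} fits under a multiple of $Q$) the population is bounded by $CQ(\theta)e^{\gamma_\infty t}$, which already confines most of the mass to traits $\theta \lesssim \eta_a(t)$ for suitable $a$. To get the spatial confinement I would build a more refined super-solution of WKB type, roughly $\exp(\gamma_\infty t - \phi(t,x,\theta))$ where $\phi$ is a carefully chosen function behaving like the sum of a trait-penalization term $\Phi(\theta)$ and a spatial term that, when optimized over $\theta \le \eta_a(t)$, produces the rate $\eta(t)^{3/2}$; since $\theta n_{xx}$ is the spatial operator, the natural self-similar ansatz equates $x^2/(\theta t)$ with $\theta$, i.e. $x \sim \theta^{3/2}$, and substituting $\theta \sim \eta(t)$ gives the claimed scaling. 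The key point is that on the region $\theta \le \eta_a(t)$ the growth term $1 - m(\theta)$ is controlled, while for $\theta > \eta_a(t)$ the first super-solution $CQe^{\gamma_\infty t}$ already forces the solution to be small. One then patches these two super-solutions together, taking a minimum or gluing across $\theta = \eta_a(t)$, and uses the comparison principle. A local-in-time Harnack inequality (as in the sketch for \Cref{thm:cauchy_finite}) converts the pointwise bound away from the front into the stated uniform-in-$\theta$ decay for $x \ge \overline c \,\eta(t)^{3/2}$.

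For the lower bound I would work with the principal Dirichlet eigenvalue of the linearized operator $\partial_{\theta\theta} + \theta\partial_{xx} + (1 - m(\theta))$ on a moving, growing domain of the form $\{|x - x_0 - (\text{displacement})| \le R_x(t)\} \times \{\underline\theta \le \theta \le R_\theta(t)\}$, with $R_\theta(t) \sim \eta_a(t)$ and $R_x(t)$ chosen so that the box grows like $\eta(t)^{3/2}$. On such a box the principal Dirichlet eigenvalue of $\partial_{\theta\theta} + (1-m(\theta))$ converges to $\gamma_\infty > 0$ as $R_\theta \to \infty$, while the cost of confining $x$ to a ball of radius $R_x$ with diffusivity at most $R_\theta$ is of order $R_\theta / R_x^2$; balancing $R_\theta/R_x^2$ against a small fraction of $\gamma_\infty$ forces $R_x \sim \sqrt{R_\theta}\cdot(\text{length scale})$, and to cover a displacement of order $\eta(t)^{3/2}$ in time $t$ the box must move at speed $\sim \eta(t)^{3/2}/t$. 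One then checks that a small multiple of the corresponding Dirichlet eigenfunction, suitably translated and time-dilated, is a genuine sub-solution of the full nonlinear equation \eqref{eq:main} once the population $\rho$ is shown to remain below a threshold like $\gamma_\infty/2$ on the relevant region — this is where the nonlocal competition term is handled, using the already-established upper bound to bound $\rho$. Iterating this moving-box construction (restarting the sub-solution at successive times, each time using the previous step's conclusion that $n(\cdot,\cdot,\underline\theta)$ is bounded below somewhere inside the box) propagates a positive lower bound out to $|x| \le \underline c\, \eta(t)^{3/2}$.

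The main obstacle I expect is making the super-solution argument for the spatial upper bound rigorous when the diffusivity $\theta$ and the growth penalization $m(\theta)$ are both unbounded: the WKB ansatz has to simultaneously capture the correct trait cutoff $\eta_a(t)$ and the correct spatial rate, and the error terms from $\Phi(\theta) \approx \psi(\theta)$ in \eqref{eq:Q_asymptotics}, from the curvature $m''$ (controlled by \Cref{hyp:m} precisely in the sublinear regime), and from the time-dependence of the moving front all have to be absorbed into the super-solution inequality. Getting the interplay between the two regions $\theta \lessgtr \eta_a(t)$ to glue into a genuine global super-solution — rather than just a formal one — is the technical heart of the proof, and is presumably why the authors flag the analysis as "very technical." The substitution of $\eta_a(t)$ for scalar multiples of $\eta(t)$, justified by \eqref{eq:etaa_to_eta1}, is what keeps the bookkeeping tractable throughout.
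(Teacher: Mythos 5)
Both halves of your plan run into the specific obstructions that the paper's proof is designed to avoid, so as written there are genuine gaps. For the upper bound, your ``WKB super-solution of the form $\exp(\gamma_\infty t-\phi)$, glued across $\theta=\eta_a(t)$'' is exactly the strategy the paper discusses and abandons: to verify the super-solution inequality \eqref{eq:supersoln_condition} one needs a sign on $\theta\phi_{xx}+\phi_{\theta\theta}$, and for the natural phase (the action $\zeta$) this produces a term $\int m''(Z_2)\left[\theta|\partial_xZ_2|^2+|\partial_\theta Z_2|^2\right]ds$ along the optimal trajectories, which cannot be controlled because $m''<0$ in the sublinear regime and the Euler--Lagrange trajectories are not explicit; the same problem persists for $e^{at-b\zeta}$ and for any gluing of such ans\"atze. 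You flag this as ``the technical heart'' but do not supply the missing idea, which in the paper is to bypass explicit super-solutions entirely: view $\R\times\Theta$ as a Riemannian manifold with metric \eqref{eq:metric}, bound the fundamental solution of \eqref{eq:linearized} by the Li--Yau estimate (\Cref{lem:Li_Yau}), and then prove a separate lower bound on the action $\zeta$ (\Cref{lem:rho_bound}) by a case analysis on whether the minimizing path spends its time at high or low traits. Without some substitute for that step, your upper bound does not close.

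For the lower bound, the single moving (or expanding) box with $R_\theta\sim\eta_a(t)$, eigenvalue ``a small fraction of $\gamma_\infty$'' and speed $\sim\eta(t)^{3/2}/t$ cannot exist: translating at that speed costs roughly $\dot X^2/(4\theta)+m(\theta)$ at trait level $\theta$, and with $\dot X\sim\eta(t)^{3/2}/t$ this penalty diverges as $t\to\infty$ at \emph{every} trait level (at low $\theta$ the kinetic cost $\sim\eta^3/t^2\sim\eta/m(\eta)\to\infty$, at $\theta\sim\eta$ the mortality $m(\eta)\to\infty$, and the optimal intermediate trait still gives an unbounded penalty). So no quasi-stationary structure moving with the front keeps a growth rate bounded below, and your iteration, which presupposes an order-one bound propagating with the box, cannot start. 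This is precisely why the paper's proof of \Cref{prop:lowerboundacc} accepts exponentially large losses: it runs the moving-ellipse sub-solution of \Cref{lem:movement} along a three-stage trajectory (up to $\theta\approx\eta_{\underline a}(T)+H$, across a distance $\eta_{\underline a}(T)^{3/2}$ at high trait, back down), each stage taking time $T_1\ll T$ and costing only a factor $e^{-\gamma_\infty T/10}$, and then recovers order one in the remaining time $\approx T$ by a stationary spectral sub-solution near $\underline\theta$ (Step 3bis), handling the nonlocal term with \Cref{lem:harnack}. The ``absorb losses, then regrow'' bookkeeping, and the fact that the population driving the front lives at a different scale than the bulk, are the essential ingredients missing from your balancing heuristic; your treatment of $\rho$ via the upper bound and Harnack is in the right spirit, but it does not substitute for them.
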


This result might be surprising at first glance, since populations with very high traits have a negative growth rate. It turns out that the spatial sorting still gives a strong propagation force to population with high traits at the edge of the invasion. 
%
%


To illustrate the result, we discuss two concrete choices of $m$.  First, if $m(\theta) \sim \theta^p$ for $p\in(0,1)$, one can check that $\eta(t) \sim t^{2/(2+p)}$.  Hence the front is at $\eta^{3/2}(t) \sim t^{3/(2+p)}$.  We point out that this is an  interpolation between the cases $p=1$, when no acceleration occurs (see~\Cref{thm:cauchy_finite}), and $p=0$, when acceleration is of order $t^{3/2}$ (see~\cite{BerestyckiMouhotRaoul,BHR_Acceleration})\footnote{Strictly speaking~\cite{BerestyckiMouhotRaoul,BHR_Acceleration} does not deal with the general case when $m(\theta)$ converges to a constant, but instead with the case $m(\theta) = 0$.  This particular case corresponds to the growth rate at infinity with $p = 0$.}.  Second, if $m(\theta) \sim \log(\theta)^p$ then it is easy to check that $\eta(t) \sim t\log(t)^{-p/2}$.  Hence the front is at $\eta(t)^{3/2} = t^{3/2}\log(t)^{-3p/4}$.  Notice that with this weaker trade-off term, the acceleration is almost at the same order as with no trade-off.

Let us now comment on the difficulties of the proof of \Cref{thm:acceleration}. In order to obtain the lower bound, we follow a similar strategy as in~\cite{BouinHenderson,BHR_Acceleration}.  We build a sub-solution on a moving ball using the principal Dirichlet eigenvalue.  There are three main difficulties here.  First, the problem is nonlocal and thus does not have a comparison principle.  To overcome this, we relate it to a local problem by estimating the nonlocal term $\rho$ using two ingredients: when $\theta$ is small, we may use a local-in-time Harnack inequality and when $\theta$ is large, we may obtain a priori estimates on the tails in trait of the solution $n$.  Second, in contrast to~\cite{BHR_Acceleration}, the path of this moving ``bump'' sub-solution cannot be found explicitly since the ODE system for the optimal path given by the Euler-Lagrange equation is not explicitly solvable.  Instead, we must optimize over rectangular paths.  Thirdly, the trade-off term $m$ is large when $\theta$ is large.  Hence, when $\theta$ is large, we add a multiplicative factor to our super-solution, which exponentially decays in time at a large rate, in order to absorb the high mortality rate.  We make up for this at the end of the trajectory by letting our moving sub-solution remain unmoving in a favorable area for a long period of time.  This strategy, absorbing losses and then re-growing later while keeping a careful accounting of them, is new in the reaction-diffusion literature to our knowledge.  In contrast, classical results come from settings where an eigenvalue can be well-posed and $O(1)$ solutions can be built using it.

The strategy of the proof of the upper bound is related in spirit to~\cite{BHR_Acceleration} but is technically completely different. 
In order to avoid complications with the nonlocal term, we notice that solutions to the linearized equation \eqref{eq:linearized}
are super-solutions to $n$.  As such we seek bounds on $\overline n$.  Historically, there are two ways to obtain super-solutions to reaction-diffusion equations: with a travelling wave solution or with heat kernel estimates.  Since a travelling wave moves at a constant speed, it cannot be used to bound an accelerating front from above.  On the other hand, classical heat kernel estimates on $\R^2$ require the diffusion operator to be comparable to the Laplacian and require any zeroth order terms in the operator to be bounded.  This is not our setting as $\theta \partial_x^2 + \partial_\theta^2$ is not comparable to the Laplacian uniformly in $\theta$ and $m(\theta)$ is unbounded by assumption.  In any case, our goal is understanding the precise balance between these two terms, so, even if it were possible to bound these by constants, this would provide too rough of an estimate for our purposes.  As such, we proceed by considering $\R\times\Theta$ as a two dimensional Riemannian manifold with boundary with the appropriate metric $g$.  After removing an integrating factor, we may view the linearized operator~\eqref{eq:linearized} as the Laplace-Beltrami operator $\Delta_g$ with a potential $-m(\theta)$.  We may then appeal to the methods of~\cite{LiYau} to obtain bounds on the fundamental solution of~\eqref{eq:linearized}.  After some careful modification of this fundamental solution and after reinserting the integrating factor, this provides a super-solution to $n$. We note that these heat kernel estimates do not provide the propagation result immediately.  Indeed, the results in~\cite{LiYau} give heat kernel estimates in terms of a Lagrangian, and this Lagrangian is itself difficult to estimate precisely. 
The use of the estimates coming from~\cite{LiYau} is not common in works investigating propagation, and we believe that this is an important addition to the toolbox for these types of problems.

Finally, we should mention why two other more well-known methods do not work.  First, one might ask if we can construct an explicit super-solution.  For example, one might expect functions of the same form as the upper estimate of the heat kernel in~\cite{LiYau} to be super-solutions. We outline in~\Cref{sec:acceleration} why this is not necessarily the case. 
Second, one may ask why large deviations methods, e.g.~the probabilistic methods of Friedlin~\cite{Freidlin1, Freidlin2} or the thin front limit approach of Evans and Souganidis~\cite{EvansSouganidis}, do not work.  In the construction of the sub-solution, we see that there are two scales.  Indeed, in the case where $m(\theta) \sim \theta^p$, the population that {\em drives} the acceleration is at $t^{3/(2-p)}$ while the bulk of the population is at $t^{3/(2+p)}$.  Hence scaling $(x,\theta)$ in the appropriate way in order to see the front loses the information about the population which drives the acceleration as it is lost in the scaling.  We should point out that this is an interesting feature of the model: the population that drives the front is different from the population that is sustained behind the front.

The proof of \Cref{thm:acceleration} is in \Cref{sec:acceleration}.

%
%

\subsubsection*{A Harnack estimate and a uniform upper bound}

Two key tools in our analysis are a uniform-in-time upper bound of $n$ and a local-in-time Harnack inequality that we can deduce from it.  We state both now.

\begin{prop}\label{prop:uniform_upper_bound}
	Suppose that $m$ satisfies~\Cref{hyp:m}.  Suppose that $n$ satisfies~\eqref{eq:main} with initial condition satisfying~\eqref{eq:n_0}.  Then there exists a constant $M$, depending only on $m$, such that
	\[
		\|\rho\|_{L^\infty(\R^+\times \R)}, \|n \|_{L^\infty(\R^+\times \R \times \Theta)} \leq M.
	\]
\end{prop}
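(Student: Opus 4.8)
The strategy is to compare $n$ with solutions of the purely local ODE problem by integrating out the spatial variable, and to handle the trait variable $\theta$ via a suitable weighted comparison. First I would establish a crude local-in-time bound: by the maximum principle applied to $\overline n_t = \theta \overline n_{xx} + \overline n_{\theta\theta} + \overline n(1-m(\theta))$ with initial data $C_0\1_{\cdots}$, together with the Neumann condition at $\underline\theta$ and the fact that $1-m(\theta)\le 1$, one gets $n(t,x,\theta)\le C_0 e^{t}$ for all $t$; this is not uniform but ensures everything is finite on compact time intervals and lets us work with genuine (classical) solutions. The real content is to upgrade this to a uniform-in-time bound. For the bound on $\rho$, I would integrate~\eqref{eq:main} in $\theta$ over $\Theta$: using the Neumann condition, $\int n_{\theta\theta}\,d\theta = -n_\theta(t,x,\underline\theta) = 0$, which gives
\[
	\rho_t = \left(\int_{\underline\theta}^\infty \theta\, n\, d\theta\right)_{xx} + \rho - \rho^2 - \int_{\underline\theta}^\infty m(\theta)\, n(t,x,\theta)\, d\theta.
\]
The diffusion term is problematic because it carries the weight $\theta$; this is exactly the mechanism that could, a priori, allow $\rho$ to grow. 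The key observation, which I expect to be the main obstacle, is that the mortality term $-\int m(\theta) n\, d\theta$ must be used to control the mass sitting at high $\theta$. Since $m(\theta)\to\infty$, for any $K>0$ there is $\theta_K$ with $m(\theta)\ge K$ for $\theta\ge\theta_K$, so $\int_{\theta_K}^\infty m\, n\,d\theta \ge K \int_{\theta_K}^\infty n\, d\theta$; thus high-trait mass is damped at an arbitrarily fast rate, while low-trait mass ($\theta\le\theta_K$) contributes a spatial diffusion term with bounded coefficient $\theta\le\theta_K$.

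Concretely, I would split $\rho = \rho_{\mathrm{lo}} + \rho_{\mathrm{hi}}$ with $\rho_{\mathrm{lo}}(t,x)=\int_{\underline\theta}^{\theta_K} n\, d\theta$ and $\rho_{\mathrm{hi}}(t,x)=\int_{\theta_K}^\infty n\, d\theta$, choosing $K$ large (e.g. $K\ge 10$). The equation for $\rho_{\mathrm{hi}}$ has a reaction term bounded above by $(1-K)\rho_{\mathrm{hi}} + n_\theta(t,x,\theta_K)$-type boundary flux; the flux term is the only subtlety and can be absorbed into the $\rho_{\mathrm{lo}}$ analysis (or, alternatively, one uses a smooth weight $w(\theta)$ increasing with $m$ rather than a sharp cutoff, so $\int w\, n\, d\theta$ satisfies a clean inequality with no boundary terms and with a strongly negative reaction contribution $-\int (m - \text{const}) w\, n$). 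Granting the weighted mass $\int w(\theta) n\, d\theta$ stays bounded for a well-chosen $w$ — proved by a maximum-principle/Grönwall argument on that integrated quantity, where the $x$-diffusion has bounded coefficient after the $w$-weighting is arranged to dominate $\theta$ on the relevant range — one controls $\rho$. Indeed $\rho \le C\int w\, n\, d\theta$ once $w$ is bounded below, so $\|\rho\|_{L^\infty}\le M$ follows, the constant depending only on $m$ (through the choice of $w$ and $\theta_K$) and on $C_0$; since $C_0$ is itself normalized away or absorbed, one states it as depending only on $m$.

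Finally, to get $\|n\|_{L^\infty}\le M$ from $\|\rho\|_{L^\infty}\le M$, I would feed the bound on $\rho$ back into~\eqref{eq:main}: $n$ is then a subsolution of the linear equation $\overline n_t = \theta\overline n_{xx} + \overline n_{\theta\theta} + \overline n(1 - m(\theta))$, and one constructs a stationary (or slowly varying) supersolution of the form $A Q(\theta)$, where $Q$ solves~\eqref{eq:specQ}; since $\gamma_\infty$ may be positive, this is not immediately a supersolution, so instead I would use the bound on $\rho$ more carefully: the genuine equation for $n$ reads $n_t = \theta n_{xx} + n_{\theta\theta} + n(1 - m(\theta) - \rho)$, and wherever $n$ is large we have $\rho$ not too small only if mass is concentrated — the cleaner route is to note that $n(t,x,\theta)\le \rho(t,x)$ is false pointwise, so instead one invokes parabolic regularity: on the time interval $[t-1,t]$, the coefficient $1 - m(\theta) - \rho$ is bounded above by $1$, and a standard local parabolic estimate (or the representation via the heat semigroup of $\theta\partial_x^2+\partial_\theta^2$ with Neumann condition) gives $\|n(t,\cdot)\|_{L^\infty} \lesssim \|n\|_{L^\infty_t L^1_\theta}(t-1,t) = \|\rho\|_{L^\infty} \le M$, using that the semigroup maps $L^1_\theta L^\infty_x$ to $L^\infty$ after unit time with a constant depending only on $\underline\theta$. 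The dependence of all constants is tracked through $m$ only, completing the proof. The main obstacle, as noted, is the weighted-mass estimate: choosing $w$ so that simultaneously (i) $w$ controls $\theta$ on the support of the low-trait part, (ii) $\int (m(\theta)-c) w(\theta) n\, d\theta$ dominates the bad diffusion contribution, and (iii) the resulting scalar inequality for $\int w\, n\, d\theta$ closes via Grönwall — this balancing is where Hypothesis~\ref{hyp:m} on the growth of $m$ enters.
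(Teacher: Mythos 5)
The crux of your argument --- the uniform bound on $\rho$ obtained from a Gr\"onwall/maximum-principle estimate for a weighted trait-mass $\int w(\theta)\, n\,d\theta$ --- is precisely the step that does not close, and you have not supplied the missing mechanism. Integrating \eqref{eq:main} in $\theta$ against a weight $w$ produces the spatial flux term $\partial_{xx}\int \theta\, w(\theta)\, n\,d\theta$, which involves a strictly higher moment than the quantity you are trying to control: since $\Theta$ is unbounded there is no weight with $\theta w(\theta)\le C w(\theta)$, so the moment hierarchy never closes, and the mortality term $-\int m\, w\, n\,d\theta$ cannot absorb the flux pointwise in $x$ because the two act on different moments; moreover, at a spatial maximum of $\int w\, n\,d\theta$ the quantity $\partial_{xx}\int \theta\, w\, n\,d\theta$ has no sign, so the maximum principle gives nothing. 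Rewriting the flux as $\partial_{xx}\bigl(c(t,x)\int w\, n\,d\theta\bigr)$ with $c=\int\theta w n\,d\theta / \int w n\,d\theta$ does not help either: $c$ is solution-dependent, possibly unbounded, and enters through $\partial_{xx}(cu)=c\,u_{xx}+2c_x u_x+c_{xx}u$, so neither a comparison argument nor Gr\"onwall applies. Your conditions (i)--(iii) therefore cannot be met simultaneously, and the whole chain ($\rho$ bounded first, then $n$ via semigroup smoothing) rests on an unproved estimate. Your last step also quietly assumes an $L^1_\theta L^\infty_x\to L^\infty$ smoothing bound for the semigroup of $\theta\partial_x^2+\partial_\theta^2$, which is not classical because of the unbounded diffusion coefficient; the paper is careful to work only at bounded $\theta$, via a tail bound, exactly to avoid this kind of issue.

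For comparison, the paper closes the estimate in the opposite order. First, a tail lemma: comparison with a multiple of $Q_\infty^\delta$ (the eigenfunction of the $\delta$-perturbed trait operator) gives $n\le C M_T Q_\infty^\delta$ with $M_T=\sup_{t\le T}\|n(t)\|_{L^\infty}$, hence $\rho\le C M_T$ and, in particular, the supremum of $n$ is approached at bounded $\theta$. Next, at the interior maximum point of $n$ the equation itself forces $\rho\le 1$ \emph{at that point}. Local parabolic regularity ($W^{1,2}_p$ estimates and Sobolev embedding, legitimate there because $\theta$ is bounded near the maximum) gives a H\"older bound $[n]_{C^{(1+\delta)/2,1+\delta}}\le C(M_T+M_T^2)$, and the Gagliardo--Nirenberg interpolation in $\theta$ at the maximum point, between $\|n(t_T,x_T,\cdot)\|_{L^1_\theta}\le\rho(t_T,x_T)\le 1$ and this H\"older seminorm, yields $M_T\le C(M_T+M_T^2)^{1/(2+\delta)}$, which is sublinear and hence bounds $M_T$ uniformly in $T$; the bound on $\rho$ then follows from the tail lemma. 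The insight you are missing is that one never needs a global a priori bound on $\rho$: the pointwise inequality $\rho\le 1$ at the maximum of $n$, which the PDE gives for free, combined with regularity and interpolation, suffices.
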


\begin{lemma}\label{lem:harnack}
Let $\epsilon > 0$, $t_0 > 0$, $R>0$ and any point $x_0 \in \R$.  There exists $C_{R,\epsilon,t_0}$, depending only on $\epsilon$, $t_0$, and $R$, such that a solution $n$ of~\eqref{eq:main}-\eqref{eq:n_0} satisfies
 \[
	\rho(t,x_0)
		\leq \epsilon + C_{R,\epsilon,t_0} \inf_{|x-x_0| <R , \theta-\underline\theta < R} n(t,x,\theta),
		\qquad \text{ for all } t \geq t_0.
 \]
\end{lemma}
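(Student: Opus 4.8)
The plan is to combine the uniform upper bound from Proposition \ref{prop:uniform_upper_bound} with a standard parabolic Harnack inequality, applied carefully so as to handle the nonlocal term and the boundary at $\theta=\underline\theta$. First I would dispose of the large-$\theta$ part of $\rho$. Fix $\epsilon>0$. Using the a priori estimates on the tails in the trait variable (the same estimates that underlie the proof of Proposition \ref{prop:uniform_upper_bound}, coming from comparison with $Q(\theta)e^{\gamma_\infty t}$-type supersolutions and the fact that $m(\theta)\to\infty$), there is $\Theta_\epsilon>\underline\theta$, depending only on $\epsilon$ (and $m$), such that $\int_{\Theta_\epsilon}^\infty n(t,x,\theta)\,d\theta \leq \epsilon$ for all $t\geq t_0$ and all $x$. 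Thus it suffices to bound $\int_{\underline\theta}^{\Theta_\epsilon} n(t,x_0,\theta)\,d\theta$ by $C\,\inf_{|x-x_0|<R,\ \theta-\underline\theta<R} n(t,x,\theta)$, and since the interval $[\underline\theta,\Theta_\epsilon]$ has finite length, this reduces to a pointwise lower bound: it is enough to show $n(t,x_0,\theta_1) \leq C\, n(t,x,\theta)$ for all $\theta_1\in[\underline\theta,\Theta_\epsilon]$, all $|x-x_0|<R$, and all $\theta\in[\underline\theta,\underline\theta+R]$, with $C$ depending only on $R,\epsilon,t_0$.

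For that pointwise comparison I would apply the parabolic Harnack inequality on the equation satisfied by $n$ on a fixed cylinder. Because $\rho$ and $n$ are bounded by $M$ (Proposition \ref{prop:uniform_upper_bound}), the zeroth-order coefficient $1-m(\theta)-\rho(t,x)$ is bounded on the region $\theta \leq \max(\Theta_\epsilon,\underline\theta+R)$, and the diffusion matrix $\mathrm{diag}(\theta,1)$ is uniformly elliptic there (since $\theta\geq\underline\theta>0$). Hence on the box $Q_{t_0,R,\epsilon} := (t-t_0/2,\,t]\times(x_0-2R,\,x_0+2R)\times(\underline\theta,\,\Theta_\epsilon+R+1)$ the function $n$ solves a uniformly parabolic equation with bounded coefficients, and the classical Harnack inequality (interior in $x$ and $\theta$ away from $\theta=\underline\theta$; up to the boundary near $\theta=\underline\theta$ using the Neumann condition $n_\theta=0$, e.g. by even reflection across $\theta=\underline\theta$) gives $\sup_{Q^-} n(t-\tau,\cdot) \leq C\,\inf_{Q^+} n(t,\cdot)$ on suitable sub-cylinders, with $C$ depending only on the ellipticity constants, the $L^\infty$ bound on the coefficients, and the geometry — i.e. only on $R$, $\epsilon$, and $t_0$ (the condition $t\geq t_0$ is what guarantees the backward-in-time cylinder of length $t_0/2$ stays in $\mathbb{R}^+$). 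Combining the two pieces yields the claim with $C_{R,\epsilon,t_0}$.

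The main obstacle is the first step: getting the tail bound $\int_{\Theta_\epsilon}^\infty n\,d\theta\leq\epsilon$ uniformly in $t\geq t_0$ and $x$. This is where the structure of the problem really enters — one needs that high traits are strongly penalized, so that even though the initial data is only controlled in a bounded trait range by \eqref{eq:n_0}, the mass cannot escape to $\theta=\infty$. Concretely I would build a supersolution of the form $A\,Q(\theta)$ (or $A\,\phi(\theta)$ for a suitable convex $\phi$ with $-\phi''+m\phi$ large) for a fixed constant $A$: since $m\to\infty$, $1-m(\theta)-\rho \leq 1-m(\theta)$ can be made very negative for large $\theta$, so such a stationary supersolution exists and its integral tail over $[\Theta_\epsilon,\infty)$ can be made $\leq\epsilon$ by choosing $\Theta_\epsilon$ large; comparison then transfers this to $n$ once $t\geq t_0$ (for $t\geq t_0$ the initial layer has been absorbed). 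The remaining steps — the finite-interval reduction and the Harnack inequality on a uniformly parabolic cylinder with the Neumann boundary handled by reflection — are routine, so I would keep them brief in the write-up and spend the detail on the tail estimate.
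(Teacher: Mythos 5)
Your overall decomposition is the same as the paper's (split $\rho$ into a $\theta$-tail, killed by a supersolution argument plus the uniform bound of \Cref{prop:uniform_upper_bound}, and a bounded-$\theta$ part handled by a Harnack-type comparison), but the Harnack step has a genuine gap. The lemma requires a \emph{same-time} comparison: $\rho(t,x_0)$, hence $n(t,x_0,\theta_1)$, must be controlled by $\inf n(t,\cdot)$ at the \emph{same} time $t$. The classical parabolic Harnack inequality you invoke only gives $\sup n(t-\tau,\cdot)\leq C\inf n(t,\cdot)$, i.e.\ it bounds the solution at an \emph{earlier} time, and you never explain how to pass from $t-\tau$ back to $t$. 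This is not a presentational issue: a purely multiplicative same-time bound $\sup_{|x-x_0|<R,\,\theta-\underline\theta<R} n(t,\cdot)\leq C_{R,\epsilon,t_0}\inf_{|x-x_0|<R,\,\theta-\underline\theta<R} n(t,\cdot)$ with a constant depending only on $R,\epsilon,t_0$ is false. Already for the heat equation with initial mass concentrated at distance $L$ from $x_0$ (which is compatible with \eqref{eq:n_0} after translating $x_0$ far ahead of the support), the solution at time $t_0$ is essentially a Gaussian tail near $x_0$ and the sup/inf ratio over an $R$-box grows like $e^{cLR/t_0}$, unbounded in $L$. This is precisely why the statement carries an additive $\epsilon$ and why the paper does not use the classical Harnack but the \emph{local-in-time} Harnack inequality of Alfaro--Berestycki--Raoul (\cite[Theorem~2.6]{AlfaroBerestyckiRaoul}), which reads $\sup n(t,\cdot)\leq C\inf n(t,\cdot)+\epsilon'$; in the paper's proof the allowance $\epsilon$ is split between the $\theta$-tail and this additive Harnack error, whereas you spend all of it on the tail and claim a multiplicative bound for the rest. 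To repair your argument you must either cite such a local-in-time Harnack inequality or prove an additive-error version yourself (e.g.\ via Duhamel and the uniform bound $n\leq M$, comparing $n(t,\cdot)$ with the heat semigroup applied to $n(t-\tau,\cdot)$ and estimating the far-field contribution by $M$ times a Gaussian tail).

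A smaller point on the tail step: $A\,Q(\theta)$ is \emph{not} a stationary supersolution of the linearized equation when $\gamma_\infty>0$, since $-\,(AQ)''-(1-m)AQ=-\gamma_\infty AQ<0$; this is exactly why the paper works with the modified eigenfunction $Q_\infty^\delta$ of \eqref{eq:eigenpb}, for which $-\psi''-(1-m)\psi=(-\gamma_\infty^\delta+\delta m)\psi\geq 0$ once $\theta\geq\theta_\delta$ (\Cref{lem:tailsevol}). Your parenthetical alternative (a convex profile $\phi$ with $-\phi''+(m-1)\phi\geq 0$ for large $\theta$, matched to the uniform bound on $\{\theta=\theta_\delta\}$ and to the compactly-in-$\theta$ supported data \eqref{eq:n_0}) does work and is equivalent in spirit, so this part is fine once you drop the $AQ$ option and make the lateral matching explicit.
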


In general, it is difficult to obtain a uniform upper bound because there is no maximum principle of~\eqref{eq:main} due to the nonlocal term.  The bound must then be obtained by a careful understanding of the regularity of $n$ given a particular bound on $\rho$.  Specifically, one must use parabolic regularity estimates to show that if $n$ is large then $\rho$ is greater than $1$ and no maximum may occur. 

Our proof of Proposition~\ref{prop:uniform_upper_bound} is in the same spirit as the proofs in~\cite{BerestyckiMouhotRaoul, HamelRyzhik, Turanova} in that it relies on the natural scaling of the parabolic equation versus that of $\rho$, yet significantly simpler in presentation and technical details.  Indeed, by appealing to standard local regularity estimates in Sobolev spaces and the Gagliardo-Nirenberg interpolation inequality, we are able to avoid the involved technical details of~\cite{BerestyckiMouhotRaoul, Turanova} and obtain a succinct proof.

Lastly, we point out that the weak Harnack inequality referenced above allows us to compare $\rho$ and $n$ for bounded $\theta$, and thus, we can compare each solution of~\eqref{eq:main} to the solution of a related local problem.  In order to compare $\rho$ and $n$, there are two local-in-time Harnack inequalities which are useful for this,~\cite[Theorem~1.2]{BHR_LogDelay} and~\cite[Theorem~2.6]{AlfaroBerestyckiRaoul}.  We use the latter because, though less precise, it is sufficient for our purposes and it is significantly easier to prove.

Proposition~\ref{prop:uniform_upper_bound} and \Cref{lem:harnack} are used throughout this article.  Their proofs, which are independent of all other results, may be found in \Cref{sec:apriori}. 

\subsection*{Numerical simulations}

We end this introduction by showing numerical simulations that illustrate the different propagation regimes. For this, we use our typical example where $m(\theta) \sim \theta^p$ when $\theta$ tends to $+\infty$.  We present, for four different values of $p$ ($p=1/3, 2/3, 1, 4/3$) some plots of the solutions "from above" in the phase space $\R\times\Theta$ for various values of time. In case of a linear propagation, it is clear from these plots. In the acceleration regime, we also provide a figure with $\rho$ for various values of time, which helps viewing the accelerated propagation.

\begin{figure}[htbp]
\begin{center}
\includegraphics[width = .45\linewidth]{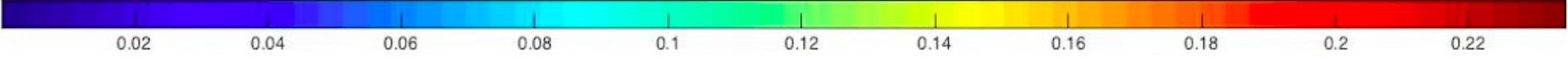}\qquad~~
\includegraphics[width = .45\linewidth]{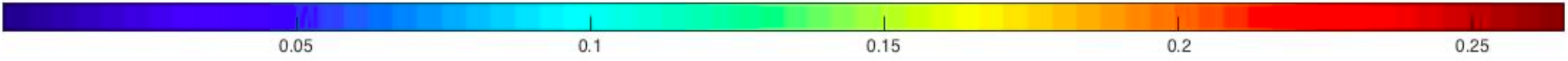}
~~\includegraphics[width=.99\linewidth]{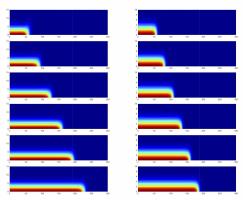}\\
\caption{Numerical simulations of the Cauchy problem of equation \eqref{eq:main} at a fixed time, in the phase space $\R\times\Theta$ at times $t=10$, $t=20$, $t=30$, $t=40$, $t=50$, $t=60$ with the choice $\underline \theta = .01$. Left column: $p=1$. Right column: $p=4/3$. Both exhibit propagation at a linear rate.}
\label{fig:Shape}
\end{center}
\end{figure}

%
\begin{figure}[htbp]
\begin{center}
\includegraphics[width = .45\linewidth]{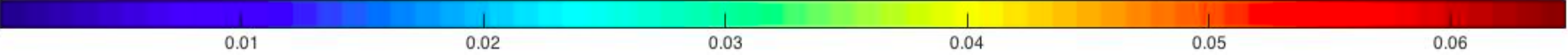}\qquad~~
\includegraphics[width = .45\linewidth]{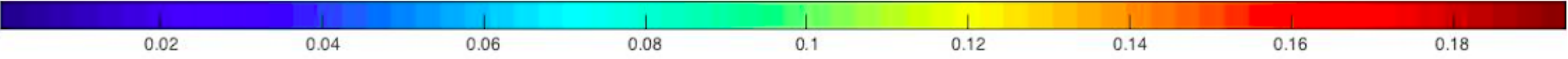}
\includegraphics[width = .80\linewidth]{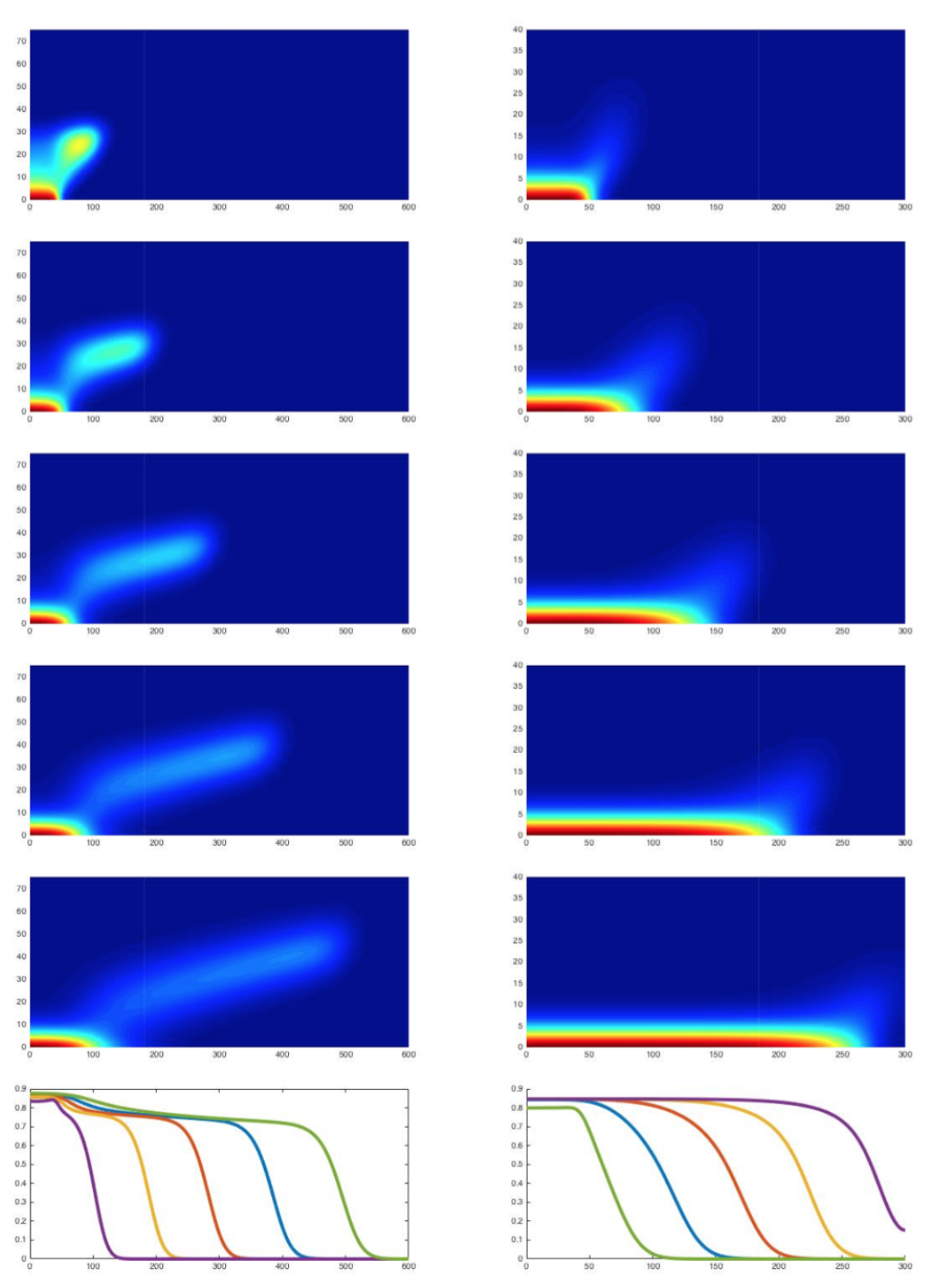}\\
\caption{Numerical simulations of the Cauchy problem of equation \eqref{eq:main} at a fixed time, in the phase space $\R\times\Theta$ at times (from top to bottom) $t=10$, $t=20$, $t=30$, $t=40$, $t=50$. Left column: $p=1/3$. Right column: $p=2/3$. Last line: evolution of $\rho$ at times $t=10$, $t=20$, $t=30$, $t=40$, $t=50$. Both exhibit  propagation at a super-linear rate.  The transient dynamics driving the acceleration are seen in the ``head'' -- the light diagonal line moving and up and to the right of the front.  This can be observed, e.g., in Step 2 of the proof of Proposition~\ref{prop:lowerboundacc}.}
\label{fig:Shape2}
\end{center}
\end{figure}

\subsection*{Acknowledgements}

EB is very grateful to the University of Sydney, where the present work has been initiated, for its hospitality.  The authors thank the University of Cambridge for its hospitality.  The authors thank warmly Vincent Calvez for early discussions about this problem, and for a careful reading of the manuscript.  CH thanks Alessandro Carlotto, Boaz Haberman, and Otis Chodosh for discussions about geometry and heat kernel estimates, which, while meant for earlier projects, found an application in this manuscript. Part of this work was performed within the framework of the LABEX MILYON (ANR- 10-LABX-0070) of Universit\'e de Lyon, within the program ``Investissements d'Avenir'' (ANR-11- IDEX-0007) operated by the French National Research Agency (ANR). In addition, this project has received funding from the European Research Council (ERC) under the European Unions Horizon 2020 research and innovation programme (grant agreement No 639638) and under the MATKIT starting grant. MHC and PSK were funded in part by the Australian Research Council (ARC) Discovery Project (DP160101597). We thank the anonymous referees for a close reading of the manuscript and very helpful suggestions.

\section{Proof of \Cref{thm:acceleration}: the acceleration regime}\label{sec:acceleration}

\subsection{The upper bound}

In this section, we present the proof of the upper bound in \Cref{thm:acceleration}, i.e. we show:
\begin{prop}\label{prop:upperboundacc}
	Under the assumptions of \Cref{thm:acceleration}, there exists $\overline a>0$ such that
	\[
			\lim_{t\to \infty}  \sup_{x\geq \eta_{\overline{a}}(t)^{3/2}} \sup_{\theta \in \Theta} n(t,x,\theta) = 0.
	\]
\end{prop}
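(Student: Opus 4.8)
The plan is to dominate $n$ by a solution $\overline n$ of the linear equation~\eqref{eq:linearized}, interpret $\overline n$ through heat-kernel bounds on a suitable curved background, and then show that this kernel is negligible beyond $\eta_{\overline a}(t)^{3/2}$ by bounding a Lagrangian from below. Since $\rho\ge 0$, one has $n(1-m-\rho)\le n(1-m)$, so — $n$ being bounded by~\Cref{prop:uniform_upper_bound} and $1-m(\theta)$ being bounded above — the comparison principle for~\eqref{eq:linearized} gives $n\le\overline n$ for any solution $\overline n$ of~\eqref{eq:linearized} with $\overline n(0,\cdot)\ge n_0$; take $\overline n(0,\cdot)=C_0\1_{[\underline\theta,\underline\theta+C_0]\times(-\infty,C_0]}$. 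Writing $\overline n=\theta^{-1/4}u$ removes the drift created by the non-divergence form of $\theta\partial_x^2+\partial_\theta^2$ and converts~\eqref{eq:linearized} into
\[
 u_t=\Delta_g u-V(\theta)\,u,\qquad V(\theta)=m(\theta)-1-\tfrac{5}{16\theta^2}=m(\theta)+O(1),
\]
where $\Delta_g$ is the Laplace--Beltrami operator of the surface $(M,g)$, $M=\R\times\Theta$, $g=\theta^{-1}dx^2+d\theta^2$, with an induced Robin condition at $\theta=\underline\theta$. The Gauss curvature of $g$ equals $-\tfrac{3}{4\theta^2}\in[-\tfrac{3}{4\underline\theta^2},0)$, so $\mathrm{Ric}_g$ is bounded below; $V$ is bounded below and tends to $+\infty$; and, crucially, in the present sublinear regime~\Cref{hyp:m} provides $m''\in L^\infty$, hence $V''\in L^\infty$, which is what makes the gradient estimate of~\cite{LiYau} applicable. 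The boundary $\{\theta=\underline\theta\}$ I would treat by an even reflection, or by extending $M$ smoothly a little below $\underline\theta$ in a way that only enlarges the kernel.

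Next, following~\cite{LiYau} — i.e.\ the differential Harnack estimate for positive solutions of $u_t=\Delta_g u-Vu$ integrated along space–time curves — together with a careful modification of the fundamental solution $p(t,X,Y)$ of that equation, one obtains a bound of the form $p(t,X,Y)\le C(t)\,e^{-\mathcal L(t,X,Y)}$, where
\[
 \mathcal L(t,X,Y):=\inf\int_0^t\Big[\tfrac14|\dot\gamma(s)|_g^2+V(\gamma(s))\Big]\,ds,
\]
the infimum over paths $\gamma:[0,t]\to M$ with $\gamma(0)=Y$, $\gamma(t)=X$, and with $C(t)$ of at most polynomial growth ($M$ being $2$-dimensional, with quadratic volume growth of balls). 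Re-inserting the factor $\theta^{-1/4}\le\underline\theta^{-1/4}$ and integrating $p$ against $u_0:=\theta^{1/4}\overline n(0,\cdot)$, which is bounded and supported in $\{\vartheta\in[\underline\theta,\underline\theta+C_0],\ y\le C_0\}$ — using that $\mathcal L$ grows super-linearly in $|x-y|$ to control the $y$–tail — gives
\[
 \sup_{\theta\in\Theta}n(t,x,\theta)\ \lesssim\ C(t)\,(1+t)^{O(1)}\,\exp\!\Big(-\inf\,\mathcal L\big(t,(x,\theta),(y,\vartheta)\big)\Big),
\]
the last infimum being over $\theta\in\Theta$, $\vartheta\in[\underline\theta,\underline\theta+C_0]$ and $y\le C_0$.

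The crux is therefore a lower bound on $\mathcal L(t,(x,\theta),(y,\vartheta))$ showing that, once $\overline a$ is fixed large enough, $\mathcal L\ge B(\overline a)\,t$ with $B(\overline a)\to\infty$ whenever $x\ge\eta_{\overline a}(t)^{3/2}$, uniformly in the other variables. Bounding $|\dot\gamma|_g^2=\dot x^2/\theta+\dot\theta^2\ge\dot x^2/\theta$ and $V(\theta)\ge m(\theta)-c_0$, this reduces to a scalar variational problem: bound below $\int_0^t[\tfrac14\dot x^2/\theta+m(\theta)]\,ds$ over paths performing a horizontal displacement $\gtrsim\eta_{\overline a}(t)^{3/2}$. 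The mechanism — exactly what produces the exponent $3/2$ — is this: a near-optimal path rests at height $\underline\theta$ (where $m=0$) most of the time, climbs to a level $\Lambda$ in time $\simeq\Lambda/\sqrt{m(\Lambda)}$ at potential cost $\simeq\Phi(\Lambda)$, performs all the horizontal travel during a time $\tau$ at height $\Lambda$ at cost $\tfrac{x^2}{4\Lambda\tau}+m(\Lambda)\tau$, and descends; forcing each contribution to be $O(t)$ gives $\Phi(\Lambda)\lesssim t$ (so $\Lambda\lesssim\eta(t)$), $\tau\lesssim t/m(\Lambda)$, and $x^2\lesssim\Lambda\tau t$, and eliminating $\tau,\Lambda$ via $t\simeq\Phi(\eta(t))\simeq\eta(t)\sqrt{m(\eta(t))}$ — which is~\eqref{eq:phi_to_m} — forces $x\lesssim\eta(t)^{3/2}$, while carrying a budget $Bt$ in place of $t$ through the same computation shows $\mathcal L\gtrsim\overline a\,t$ at $x=\eta_{\overline a}(t)^{3/2}$. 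To make this rigorous for \emph{all} paths, I would fix a threshold $\Lambda$ (to be optimised near $\eta(t)$), split $[0,t]$ into $\{\theta<\Lambda\}$ and $\{\theta\ge\Lambda\}$, use $\tfrac14\dot x^2/\theta\ge\tfrac14\dot x^2/\Lambda$ on the former and $m(\theta)\ge m(\Lambda)$ on the latter, apply Cauchy–Schwarz to the horizontal displacement on each piece — noting that the travel made while $\theta\ge\Lambda$ must itself be paid for through $\tfrac14\dot x^2/\theta$ against the now-large potential $m(\theta)\ge m(\Lambda)$ there, which prevents the path from ``cheating'' — then optimise over $\Lambda$, invoking~\eqref{eq:phi_to_m}--\eqref{eq:etaa_to_eta1} and the eventual monotonicity of $m(\theta)/\theta$ from~\Cref{hyp:m}. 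I expect this to be the main obstacle: in contrast to~\cite{BHR_Acceleration} the optimal trajectory is not explicit, so one must optimise over path \emph{profiles}, and the precise interplay among the two kinetic terms and the potential is delicate.

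Finally, combining the previous display with this action bound: for $\overline a$ chosen large enough the right-hand side is $\lesssim C(t)(1+t)^{O(1)}e^{-B(\overline a)t}\to0$ uniformly in $x\ge\eta_{\overline a}(t)^{3/2}$ and $\theta\in\Theta$, which is~\Cref{prop:upperboundacc}; since $\eta_{\overline a}(t)\le C_{\overline a}\eta(t)$ by~\eqref{eq:etaa_to_eta1}, this is also the upper-bound half of~\Cref{thm:acceleration}.
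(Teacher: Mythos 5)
Your overall route coincides with the paper's: dominate $n$ by the solution of the linearized equation, conjugate by $\theta^{1/4}$ (and an exponential), read the resulting operator as $\Delta_g$ plus a potential on $(\R\times\Theta,\ \theta^{-1}dx^2+d\theta^2)$, invoke the Li--Yau bounds, and reduce the proposition to a lower bound on the action. The problem is that the two places where the real work lies are not actually done. The action lower bound -- the analogue of \Cref{lem:rho_bound}, which is the crux of the whole proposition -- is presented as a heuristic plus a plan (``I would fix a threshold $\Lambda$\dots I expect this to be the main obstacle''), not a proof. The paper proves it by using the Euler--Lagrange relation $\dot Z_1=2\alpha Z_2$ to rewrite the horizontal cost exactly as $x^2/\bigl(4\int_0^t Z_2\,ds\bigr)$, then splitting according to whether the path spends its ``mass'' above or below the level $\theta_d$ where $m(\theta)/\theta$ becomes monotone, and playing Young's inequality against the climbing cost $\Phi(M_\theta)$, with the regimes $M_\theta\le\eta_{\overline a}(t)$ and $M_\theta\ge\eta_{\overline a}(t)$ treated separately. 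Your sketch has the right ingredients (threshold splitting, eventual monotonicity of $m(\theta)/\theta$, \eqref{eq:phi_to_m}), but the danger you yourself flag -- fast horizontal travel at great height paid only through $\dot x^2/(4\theta)$ -- is exactly what the bound $\dot x^2/(4\theta)\ge\dot x^2/(4\Lambda)$ on $\{\theta<\Lambda\}$ does not control; to close it you need either the pointwise Young pairing $\dot x^2/(4\theta)+m(\theta)\ge|\dot x|\sqrt{m(\theta)/\theta}\ge|\dot x|\sqrt{m(M_\theta)/M_\theta}$ combined with the ascent cost $\Phi(M_\theta)$, or the Euler--Lagrange reduction; as written the key estimate is missing.

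The second gap is the claim that the kernel bound holds uniformly for all $\theta\in\Theta$ with a prefactor ``of at most polynomial growth'' by ``quadratic volume growth of balls''. That justification is backwards: in Li--Yau's Corollary~3.2 the prefactor is the reciprocal of the square root of the volumes of the action sub-level sets $S_{\bar a t}$, so one needs these volumes bounded \emph{below}, not above, and with the unbounded potential $m$ this fails at large $\theta$ -- if $\Phi(\theta)\gg t$ the set $S_{\bar a t}(x,\theta,t)$ is essentially empty. This is precisely why \Cref{lem:Li_Yau} is stated only for $\theta\le\eta_{\gamma_\infty+1}(t)$ (the volume estimate of \Cref{sec:appendix} is restricted to that range), and why the paper handles $\theta\ge\eta_{\gamma_\infty+1}(t)$ by the separate super-solution $Q(\theta)e^{\gamma_\infty t}$ together with \eqref{eq:Q_asymptotics}. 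Your argument needs that extra case, and the prefactor should be allowed to be $e^{Ct}$ (as in the paper), which your choice of $\overline a$ large can still absorb. Minor points: the Gauss curvature of $g$ is $-1/\theta^2$ (scalar curvature $-2/\theta^2$), not $-3/(4\theta^2)$; your remark about the Robin condition at $\theta=\underline\theta$ and the treatment of the non-compact support of $n_0$ by integrating the kernel against the initial data are fine and essentially equivalent to the paper's sum over translates in \eqref{eq:w}.
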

We may then deduce the upper bound in \Cref{thm:acceleration} by recalling \eqref{eq:etaa_to_eta1}. To get an upper bound on the propagation, we use the linearization around zero of~\eqref{eq:main}:
\begin{equation*}
\begin{cases}
\overline n_t = \theta \overline n_{xx} + \overline n_{\theta\theta} + \left( 1 - m \right)  \overline n , &\qquad \text{on } \R^+ \times \R\times\Theta,\\
\overline n_\theta(\cdot,\underline\theta) = 0, & \qquad \text{on } \R^+ \times \R. 
\end{cases}
\end{equation*}

Indeed, solutions to~\eqref{eq:main} are sub-solutions to $\overline n$.  Hence, by the comparison principle, $n \leq \overline n$ and it suffices to prove Proposition~\ref{prop:upperboundacc} for $\overline n$. 
In order to bound $\overline n$, we now introduce a key quantity. It is the action associated to the Lagrangian corresponding to \eqref{eq:main}:
\[
	\zeta(t,x,\theta,y,\eta) = \inf\left\{
		\int_0^t \left(\frac{|\dot Z_1|^2}{4Z_2} + \frac{|\dot Z_2|^2}{4} + m(Z_2)\right)ds
			: Z(0) = (y,\eta), Z(t) = (x,\theta), Z\in C^{0,1}\left( [0,t] \right)
	 \right\}.
\]
In general, we simply write $\zeta(t,x,\theta)$ to mean $\zeta(t,x,\theta,0,\underline\theta)$. We seek a super-solution which asymptotically looks like $e^{t - \zeta}$.

%

\subsubsection*{A brief discussion of $\zeta$ and previous strategies}

We briefly describe why such a super-solution is expected and why the strategy from~\cite{BHR_Acceleration}, where we showed that $e^{t-\zeta}$ is a super-solution to the equation without trade-off, does not work in this setting.  When $m \equiv 0$, one may expect to follow the work of Evans and Souganidis~\cite{EvansSouganidis} to see that the front should follow the dynamics of the solution of $\zeta$ due to the fact that it is the solution to the equation 
\[
	\zeta_t + \theta|\zeta_x|^2 + |\zeta_\theta|^2 = 0.
\]
This is in the thin front limit with the scaling given in \cite{BCMetal} (also recalled in \cite{BHR_Acceleration}). In physical space, i.e.~without scaling, one can check that $e^{t-\zeta}$ is a super-solution to \eqref{eq:main} if and only if
\begin{equation}\label{eq:supersoln_condition}
0 \leq \theta \zeta_{xx} + \zeta_{\theta\theta}.
\end{equation}
When $m \equiv 0$, it can be checked that $\zeta$ satisfies this, see \cite{BHR_Acceleration}.

When the trade-off $m$ is present, it is much harder to check~\eqref{eq:supersoln_condition}.  Indeed, define the related Lagrangian by $L_m(Z,\dot Z) = \frac{1}{4} \left(\frac{|\dot Z_1|^2}{Z_2} + |\dot Z_2 |^2\right)  + m(Z_2)$ and let $Z$ be an optimal trajectory satisfying the Euler-Lagrange equations.  As mentioned above, it is well-known that $\zeta_t + \theta |\zeta_x|^2 + |\zeta_\theta|^2 = m(\theta)$.  On the other hand, a computation using the Euler-Lagrange equations for $Z$ yields
\[
	\zeta_{xx} = \int_0^1 (v_x(t) \cdot ((D^2 L)(v) v_x(t)) ds
		\qquad\text{ and }\qquad
	\zeta_{\theta\theta} = \int_0^1 (v_\theta(t) \cdot ((D^2L)(v)v_\theta(t)) ds,
\]
where $v(t) = (Z(t), \dot Z(t))$.  One can check that $D^2L_m = D^2L_0 + (\delta_{i2} \delta_{j2} m''(Z_2))_{ij}$, where $L_0$ is the Lagrangian with no trade-off term.  A straightforward computation shows that $L_0$ is nonnegative definite and hence, if $m'' \geq 0$ then $L_m$ is nonnegative definite; however, we expect that $m'' < 0$ holds for large $\theta$ since $m$ is sub-linear.  Even though $L_m$ may not be nonnegative definite, one may still hope that $\theta\zeta_{xx} + \zeta_{\theta\theta} \geq 0$.  However, the term involving $m''$ in $\theta \zeta_{xx} + \zeta_{xx}$ is
\[
	\int_0^1 m''(Z_2) \left[\theta |\partial_x Z_2|^2 + |\partial_\theta Z_2|^2 \right] ds.
\]
Since we cannot explicitly solve for $Z$, this integral is very difficult to estimate.  As such, we are unable to show that there are super-solutions of the form $e^{t - \zeta}$.  Similar difficulties arise when looking at $e^{at - b \zeta}$ for any choice of $a$ and $b$.

\subsubsection*{Relating $\overline n$ and $\zeta$}

We now explain how to relate $\overline n$ and $\zeta$. For this, we use results by Li and Yau~\cite[Corollary~3.2]{LiYau} on parabolic equations on Riemannian manifolds. In this paper, the authors derive bounds on the fundamental solutions of the heat equation with potential on general manifolds. In particular, they prove an upper bound after proving a suitable Harnack inequality.

We use their analysis to derive an upper bound on the fundamental solution of \eqref{eq:linearized}. To show that we can use their results, we view $\R\times\Theta$ as a Riemannian manifold with an appropriate metric $g$ so that the second order operator in \eqref{eq:linearized} is the Laplace-Beltrami operator on that manifold.

To this end, we need to remove an integrating factor of $\overline n$.  Let $r = 1+\underline \theta^{-2}$ and define $v = e^{-rt} \theta^{1/4} \overline n$. Notice that
\begin{equation}\label{eq:v_eqn}
	v_t = \theta v_{xx} + v_{\theta\theta} - \frac{1}{2\theta} v_\theta + v\left(1-m + \frac{5}{16 \theta^2} - r\right)
		\leq \theta v_{xx} + v_{\theta\theta} - \frac{1}{2\theta} v_\theta - mv.
\end{equation}

Let $G(t,x,y,\theta,\eta)$ be the fundamental solution to
\begin{equation}\label{eq:G_pde}
	G_t = \theta G_{xx} + G_{\theta\theta} - \frac{1}{2\theta} G_\theta - m G,
\end{equation}
on $\R\times \Theta$ with Neumann boundary conditions in $\theta$.  In order to obtain bounds on $G$, we wish to apply the bounds obtained in~\cite{LiYau}.  To this end, we define the metric
\begin{equation}\label{eq:metric}
	g = \begin{pmatrix}
		\frac{1}{\theta} & 0 \\ 0 & 1
		\end{pmatrix},
\end{equation}
and, denoting $\Delta_g$ as the Laplace-Beltrami operator associated to $g$, notice that $G$ satisfies
\[
	G_t = \Delta_g G - mG.
\]
In addition, one can check that the curvature of the manifold $(\R\times\Theta, g)$ is uniformly bounded. For the help of the reader, we include a discussion of all geometric issues in \Cref{sec:appendix}.

If $n_0$ were compactly supported, we would take $G(t+1,x,\theta,0,\underline\theta)$ as a super-solution.  Since $n_0$ is not compactly supported, we modify this and define $w$ as
\begin{equation}\label{eq:w}
	w(t,x,\theta)
		= C \sum_{\ell=0}^\infty G(t+1, x + \ell ,\theta,0,\underline\theta),
\end{equation}
where $C$ is chosen large enough that $w(0,\cdot) \geq \overline n_0$.  This is essentially the convolution of $G$ with the initial data, but this formulation is more convenient computationally in the sequel.  It follows that $e^{rt}\theta^{-1/4} w(t,x,\theta)$ is a super-solution to $\overline n$.  We point out that this super-solution is different from the one appearing in~\cite{BHR_Acceleration}.  In that article, because we had an explicit formula for the super-solution, we could extend the super-solution to the quadrant $\{x<0\}$ by ``forgetting'' the $x$-dependence in a way that the solution remained $\mathcal{C}^1$.  Here, we cannot do that since we have no way of verifying that such a construction is still a super-solution on the line $\{x = 0\}$.

With this set, we recall the following results by Li and Yau~\cite[Corollary~3.2]{LiYau}.

\begin{lemma}\label{lem:Li_Yau}
Let $t>t_0$ and $x \in \R$. For $\theta \leq \eta_{\gamma_\infty + 1}(t)$, there exists a constant $C$ such that
\[
	G(t,x,\theta,0,\underline\theta) \leq C \exp \left\{ C t - \frac{\zeta(t,x,\theta)}{2}\right\}.
\]
\end{lemma}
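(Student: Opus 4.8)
The plan is to read this bound off from the heat-kernel estimate of Li and Yau~\cite[Corollary~3.2]{LiYau}, whose upper bound is stated precisely in terms of a Lagrangian action of the same type as the one defining $\zeta$. Thus the only two tasks are to verify that the hypotheses of~\cite{LiYau} apply to our operator, and then to match the two actions, using the factor $\tfrac12$ in the statement as room to absorb the unavoidable loss of constants. For the first task, I would record the following (the geometric computations being collected in~\Cref{sec:appendix}): recall that, after~\eqref{eq:metric}, $G$ is the Neumann heat kernel of $\Delta_g - m$ on the surface $(\R\times\Theta,g)$, where $\Delta_g = \theta\partial_x^2 + \partial_\theta^2 - \tfrac{1}{2\theta}\partial_\theta$; this surface is complete; its Gauss curvature equals $-\tfrac{3}{4\theta^2}$ and hence lies in $[-\tfrac{3}{4}\underline\theta^{-2},0]$, so its Ricci curvature is bounded below; its boundary $\{\theta=\underline\theta\}$ is convex (its second fundamental form with respect to the inward unit normal $\partial_\theta$ is positive, equal to $\tfrac{1}{2\underline\theta}$ times the induced metric), which is exactly the condition under which the Li--Yau bounds remain valid with Neumann boundary conditions; and the volumes of metric balls satisfy $c(t_0)\leq V_{(x,\theta)}(\sqrt t)\leq Ce^{Ct}$ for $t\geq t_0$, uniformly in $(x,\theta)$, in particular at the base point $(0,\underline\theta)$. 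Finally $m$ is smooth and nonnegative. Note also that $\tfrac{|\dot Z_1|^2}{4Z_2}+\tfrac{|\dot Z_2|^2}{4} = \tfrac14\,g(\dot Z,\dot Z)$, so that the functional defining $\zeta$ is precisely the classical action of a particle in the potential $m$ on $(\R\times\Theta,g)$ --- the object appearing in~\cite{LiYau}.

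With this in hand, I would apply~\cite[Corollary~3.2]{LiYau} to the positive solution $(t,x,\theta)\mapsto G(t,x,\theta,0,\underline\theta)$ of $G_t=\Delta_g G - mG$. For any fixed $\delta>0$ this gives
\[
	G(t,x,\theta,0,\underline\theta)
		\;\leq\; \frac{C_\delta}{t\,V(\sqrt t)}\,
			\exp\!\left( C_\delta\big(1+Kt+\sqrt{K}\,d_g((0,\underline\theta),(x,\theta))\big)-\zeta_\delta(t,x,\theta)\right),
\]
where $K=\tfrac{3}{4}\underline\theta^{-2}$, $d_g$ is the Riemannian distance, and $\zeta_\delta$ differs from $\zeta$ only in that the kinetic part of the Lagrangian is multiplied by $\tfrac{4}{4+\delta}<1$ (the usual loss of constant in Gaussian-type heat-kernel bounds; the potential part may also pick up a factor that tends to $1$ as $\delta\to0$). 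In particular $\zeta_\delta\to\zeta$ as $\delta\to0$, and, since $m\geq0$, $\zeta\geq d_g^2/(4t)$. For $t\geq t_0$ the prefactor is $\leq Ce^{Ct}$ by the volume bound; fixing $\delta$ small enough that $\zeta_\delta\geq\tfrac56\zeta$ and absorbing $C_\delta\sqrt{K}\,d_g$ via Young's inequality (it is $\leq\tfrac{1}{12t}d_g^2+C't\leq(\zeta_\delta-\tfrac12\zeta)+C't$) turns the display into $G(t,x,\theta,0,\underline\theta)\leq C\exp\{Ct-\tfrac12\zeta(t,x,\theta)\}$, which is the assertion. The restriction $\theta\leq\eta_{\gamma_\infty+1}(t)$ is not needed for the argument above; it is simply the range in which this estimate will be applied and in which the quantities feeding the constants of~\cite{LiYau} remain uniformly controlled.

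The delicate points are two. First, the geometric hypotheses of~\cite{LiYau} --- completeness, the curvature bound, convexity of the boundary, non-collapsing of balls --- must be checked in spite of $g$ degenerating as $\theta\to\infty$; this is routine but is the purpose of~\Cref{sec:appendix}. Second, and more essentially, one must genuinely use the form of the Li--Yau estimate: the cheap bound $G\leq G_0$ by the heat kernel $G_0$ of $\partial_t-\Delta_g$ (valid since $m\geq0$) discards the decay caused by the unboundedness of $m$, that is, exactly the $\int m$ contribution to $\zeta$, which is indispensable later for controlling the effect of large traits. It is only because~\cite[Corollary~3.2]{LiYau} gives an upper bound in terms of the Lagrangian action, rather than a plain Gaussian, that this information survives; the accompanying loss of constants in the action is harmless because of the factor $\tfrac12$.
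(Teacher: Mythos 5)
Your overall route is the same as the paper's: view $G$ as the Neumann kernel of $\Delta_g-m$ on $(\R\times\Theta,g)$, verify the geometric hypotheses (your curvature and boundary-convexity computations are fine, indeed somewhat more explicit than the text), and read the bound off \cite[Corollary~3.2]{LiYau}. The gap is in the form you attribute to that corollary and, as a consequence, in your claim that the restriction $\theta\le\eta_{\gamma_\infty+1}(t)$ is not needed. For the Schr\"odinger kernel, the Li--Yau upper bound is not normalized by volumes of metric balls $V(\sqrt t)$: it is normalized by volumes of sublevel sets of the action itself, $S_{a}(x,\theta,t)=\{(x',\theta'):\zeta(t,x,\theta,x',\theta')\le a\}$, and reads
\[
G(t,x,\theta,0,\underline\theta)\le \frac{C}{\sqrt{\Vol\big(S_{\bar a t}(x,\theta,t)\big)\,\Vol\big(S_{\bar a t}(0,\underline\theta,t)\big)}}\,\exp\Big\{C_{\bar a}\,t-\tfrac45\,\zeta(t,x,\theta)\Big\},
\]
where the parameter $a=\bar a t$ enters both the volumes and the constant $C_{\bar a}$. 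These sets see the potential: if $\Phi(\theta)\gg \bar a t$ (i.e.\ $\theta\gg\eta_{\bar a}(t)$ and $\theta$ large), then even the diagonal value $\zeta(t,x,\theta,x,\theta)$ exceeds $\bar a t$ --- staying near height $\theta$ costs about $m(\theta)t$ in potential, while descending to small traits and returning costs at least of order $\Phi(\theta)$ in kinetic-plus-potential terms --- so $S_{\bar a t}(x,\theta,t)$ can be empty and the estimate is vacuous for any fixed $\bar a$; letting $\bar a$ grow with $\theta$ destroys the uniform constant through $C_{\bar a}$. This is exactly why the lemma is stated only for $\theta\le\eta_{\gamma_\infty+1}(t)$, and why the substantive part of the paper's proof is a volume lemma: one exhibits an explicit trajectory (descend from $(x,\theta)$ along $\dot Z_2=-2\sqrt{m(Z_2)}$ to a fixed window $(\theta_1,\theta_1+1)$, then sit there) showing $(x-t,x+t)\times(\theta_1,\theta_1+1)\subset S_{\bar a t}(x,\theta,t)$ with $\bar a=m(\theta_1+1)+\gamma_\infty+1$, the hypothesis entering through $\Phi(\theta)\le(\gamma_\infty+1)t$ both to complete the descent within time $t$ and to bound its cost by $O(t)$, using \eqref{eq:phi_to_m}. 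Your uniform non-collapsing of metric balls is true but bounds the wrong quantity, so the step ``the prefactor is $\le Ce^{Ct}$ uniformly'' is unjustified as written, and the remark that the restriction on $\theta$ is dispensable shows the actual difficulty has been bypassed rather than resolved.

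Two secondary points. First, \cite{LiYau} requires quantitative control of the potential, here $|m'|\le\gamma_0$ and $m''\le\gamma_0$ (this is precisely why \Cref{hyp:m} imposes $m''\in L^\infty$ in the sublinear regime); ``$m$ smooth and nonnegative'' is not sufficient, and the constant depends on $\gamma_0$. Second, your handling of the exponent --- accepting a multiplicative loss on the action and absorbing the $\sqrt K\,d_g$ term by Young's inequality --- is fine and is consistent with the paper's $\tfrac45\zeta$, the slack being taken up by the stated $\tfrac12\zeta$; that part needs no change once the volume normalization is corrected.
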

We refer to Appendix A for a discussion on how \Cref{lem:Li_Yau} follows from \cite[Corollary~3.2]{LiYau} as it is not immediate.  By~\eqref{eq:w} and \Cref{lem:Li_Yau}, a bound on $\overline n$ follows from a bound on $\zeta$.

\subsubsection*{A bound on $\zeta$ and the conclusion of the proof of the upper bound}

Our goal in this section is to derive a lower bound for $\zeta$ and then to use that to obtain a bound on $w$.  The first step is to prove the following lemma.
\begin{lemma}\label{lem:rho_bound}
Fix any $\overline a > 0$. Assume that $x \geq \eta_{\overline a}(t)^{3/2}$. There exists a constant $C>0$ such that
\[
	\zeta(t,x,\theta) \geq C^{-1}\min\left\{\overline a t\sqrt{ \frac{x}{\eta_{\overline a}(t)^{3/2}}}
		,\frac{x^2}{t}\right\}.
\]
\end{lemma}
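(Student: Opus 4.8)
We want a lower bound on the action $\zeta(t,x,\theta)=\zeta(t,x,\theta,0,\underline\theta)$ valid when $x\ge\eta_{\overline a}(t)^{3/2}$. The strategy is to split the infimum according to how much ``time budget'' the optimal trajectory $Z=(Z_1,Z_2)$ spends on the $\theta$-component versus the $x$-component, and to bound each piece from below separately using the elementary inequality $\zeta\ge$ (kinetic action) and $\zeta\ge$ (potential action $\int_0^t m(Z_2)\,ds$) obtained by dropping the complementary terms.

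First I would treat the potential part. Since $m$ is increasing, if the trajectory's maximal height is $M:=\max_{[0,t]}Z_2$, then on the time interval where $Z_2$ is, say, within a factor of $M$, we gain $\gtrsim m(M)\cdot(\text{time there})$; more usefully, the kinetic cost of the $\theta$-motion to reach height $M$ starting from $\underline\theta$ is at least $\tfrac14\int|\dot Z_2|^2\ge \tfrac{(M-\underline\theta)^2}{4t}$ by Cauchy--Schwarz, and the potential cost of staying long enough is controlled by $\Phi$ via the definition $\Phi(\eta_a(t))=at$. The point is the standard ``one-dimensional'' fact, essentially \eqref{eq:Q_asymptotics}, that the minimal action to go from $\underline\theta$ to height $M$ in any amount of time (optimized) is comparable to $\Phi(M)$; combined with $x\ge\eta_{\overline a}(t)^{3/2}$ this should produce the $\overline a t\sqrt{x/\eta_{\overline a}(t)^{3/2}}$ term, since $\Phi(\eta_{\overline a}(t))=\overline a t$ and the $x$-motion forces the trajectory either to climb high (hence pay $\Phi(M)$ with $M$ large relative to $\eta_{\overline a}(t)$) or to move fast at low $\theta$.

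Next, the kinetic part in $x$: $\zeta\ge \tfrac14\int_0^t |\dot Z_1|^2/Z_2\,ds \ge \tfrac14\int_0^t|\dot Z_1|^2/M\,ds\ge \tfrac{x^2}{4Mt}$ by Cauchy--Schwarz (using $Z_1(0)=0$, $Z_1(t)=x$). So if $M$ stays bounded by $O(1)$ we already get $\gtrsim x^2/t$, which is the second term in the $\min$. The remaining case is $M$ large, and there we play it off against the $\theta$-cost $\gtrsim \Phi(M)$: combining $\zeta\gtrsim x^2/(Mt)+\Phi(M)$ and optimizing over $M$ — using \eqref{eq:phi_to_m} to write $\Phi(M)\asymp M\sqrt{m(M)}$ and hence $M\lesssim$ an explicit function of $\Phi(M)$ — turns the trade-off into a bound of the form $\zeta\gtrsim (x^2/t)^{\text{something}}\Phi(M)^{\text{something}}$, minimized when the two are balanced; after inserting $x\ge\eta_{\overline a}(t)^{3/2}$ and $\overline a t=\Phi(\eta_{\overline a}(t))$ this collapses to the claimed $\overline a t\sqrt{x/\eta_{\overline a}(t)^{3/2}}$. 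One should also handle the sub-case where the trajectory does most of its $x$-travel at the top, where $Z_2\approx M$ and the argument is cleanest.

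\textbf{The main obstacle.} The genuinely delicate step is making the ``minimal action to reach height $M$ is $\asymp \Phi(M)$'' claim rigorous and uniform: the optimal $\theta$-trajectory is not explicit (this is exactly the difficulty flagged in the paper's discussion of $\zeta$), so one must argue via a matching upper and lower construction — the lower bound from $\tfrac14\int|\dot Z_2|^2 + \int m(Z_2)\ge$ (by pointwise AM--GM) $\int\sqrt{m(Z_2)}\,|\dot Z_2| = \int_{\underline\theta}^{M}\sqrt{m}\,=\Phi(M)$ when $Z_2$ is monotone, with extra care when it is not monotone — and then coupling that estimate to the $x$-cost through the shared time budget $t$. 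Keeping the constants uniform as $t\to\infty$, and correctly invoking \eqref{eq:phi_to_m} (which needs $M$ large, so the bounded-$M$ case must be peeled off first), is where the real work lies; the rest is Cauchy--Schwarz and an optimization in one variable.
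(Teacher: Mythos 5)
Your two elementary lower bounds are individually fine --- the AM--GM climb estimate $\zeta \geq \Phi(M)$ (with $M=\max_{[0,t]}Z_2$) is exactly the paper's \eqref{eq:zetabound2}, and no monotonicity care is needed since $\int \dot Z_2\sqrt{m(Z_2)}\,ds=\Phi(Z_2(s_0))-\Phi(\underline\theta)$ for any path --- but the way you propose to combine them cannot give the lemma. Your $x$-kinetic bound $\zeta\gtrsim x^2/(Mt)$ uses only the \emph{peak} height, so the adversarial minimizer may spend an arbitrarily short time near its peak: the guaranteed bound from your pairing is $\min_M\max\bigl(x^2/(Mt),\Phi(M)\bigr)$, which balances at $M^*\Phi(M^*)\asymp x^2/t$. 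Concretely, for $m(\theta)\sim\theta^p$ with $p\in(0,1)$ and the borderline point $x=\eta_{\overline a}(t)^{3/2}$, one has $\eta_{\overline a}(t)\asymp t^{2/(2+p)}$, $x^2/t\asymp t^{(4-p)/(2+p)}$, hence $\Phi(M^*)\asymp t^{(4-p)/(4+p)}=o(t)$, whereas the lemma demands $\zeta\gtrsim \overline a\,t$ there (the remark after the statement, and its use against the $e^{Ct}$ factor in \Cref{lem:Li_Yau}, make the linear-in-$t$ growth essential). So the claimed ``collapse'' of your optimization to $\overline a t\sqrt{x/\eta_{\overline a}(t)^{3/2}}$ does not occur; the exponent comes out strictly too small.

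The missing idea is the sharper representation and, above all, the \emph{coupling} of the two costs through the same time-resource. The paper uses the Euler--Lagrange identity (equivalently, Cauchy--Schwarz applied to $x=\int\dot Z_1$) to get $\zeta=\frac{x^2}{4\int_0^t Z_2\,ds}+\int_0^t\bigl(\tfrac{|\dot Z_2|^2}{4}+m(Z_2)\bigr)ds$, see \eqref{eq:rhosimple}, and then exploits the hypothesis --- which you never invoke --- that $m(\theta)/\theta$ is decreasing for $\theta\geq\theta_d$ (\Cref{hyp:m}): on $B_+=\{Z_2\geq\theta_d\}$ one has $m(Z_2)\geq \frac{m(M)}{M}Z_2$, so the \emph{same} quantity $\int_{B_+}Z_2\,ds$ appears in the denominator of the kinetic term and as a factor of the potential term, and Young's inequality yields $\zeta\gtrsim x\sqrt{m(M)/M}$, as in \eqref{eq:zetabound}; the complementary case $\int_{B_-}Z_2>\int_{B_+}Z_2$ forces $\int_0^t Z_2\leq 2\theta_d t$ and gives $x^2/t$ directly. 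This bound does not degrade when the path visits high $\theta$ only briefly, which is precisely where $x^2/(Mt)+\Phi(M)$ loses. To repair your argument you would have to replace $x^2/(Mt)$ by $x^2/(4\int Z_2)$ \emph{and} estimate $\int m(Z_2)$ from below by $\frac{m(M)}{M}\int_{B_+}Z_2$ rather than by $\Phi(M)$ alone --- at which point you have reconstructed the paper's proof rather than an alternative to it.
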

We point out that each term in the minimum above has the correct or super-critical scaling.  In other words, taking $x > \eta_{\overline a}(t)^{3/2}$ in each term yields a term which is at least linear in time.
\begin{proof}
Thanks to the Euler-Lagrange equations, we know that the infimum defining $\zeta$ is actually a minimum and the minimizer satisfies
\begin{equation*}
\frac{d}{ds} \left( \frac{\dot Z_1}{2 Z_2} \right) = 0,\qquad \frac{d}{ds} \left( \frac{\dot Z_2}{2}\right) = m'(Z_2) - \frac{\vert\dot  Z_1 \vert^2}{4 Z_2^2}.
\end{equation*}
Hence there exists $\alpha$ depending on $t,x,\theta$ such that $\dot Z_1 = 2\alpha Z_2$.
Integrating this, we find $\alpha = x/(2 \int_0^t Z_2(s) ds)$.
Combining this with the identity $\dot Z_1 = 2\alpha Z_2$ and the definition of $\zeta$, we obtain
\begin{equation}\label{eq:rhosimple}
	\zeta(t,x,\theta) = \frac{x^2}{4 \int_0^t Z_2 \, ds}  + \int_0^t \left( \frac{|\dot Z_2 |^2}{4} + m(Z_2 ) \right) ds.
\end{equation}
We define $M_\theta = \max_{s \in [0,t]} Z_2(s)$. Set $B_+ = \{s: Z_2(s) \geq \theta_d\}$ and $B_- = [0,t]\setminus B_+$.  Recall that $m(\theta)/\theta$ is decreasing for $\theta\geq \theta_d$.    Define also $s_0$ such that $Z_2(s_0) = M_\theta$.  There are two cases.

\medskip

\noindent{\bf \# First case: $\int_{B_-} Z_2(s) ds \leq \int_{B_+} Z_2(s) ds$.}

\medskip

In this case, $M_\theta \geq \theta_d$. Indeed, if not, $B_+$ is an empty set which would yield a contradiction. Notice that $m(Z_2)/Z_2 \geq m(M_\theta)/M_\theta$ on $B_+$. Using this and applying Young's inequality gives
\begin{equation}\label{eq:zetabound}
\begin{split}
	\zeta(t,x,\theta) &\geq \frac{x^2}{4\int_0^t Z_2(s)ds} + \int_0^t m(Z_2(s))ds
		\geq \frac{x^2}{4\left(\int_{B_-} Z_2(s) ds + \int_{B_+} Z_2(s)ds\right)} + \int_{B_+} m(Z_2(s))ds\\
		&\geq \frac{x^2}{8 \int_{B_+} Z_2(s)ds} + \frac{m(M_\theta)}{M_\theta} \int_{B_+} Z_2(s)ds
		\geq x \left( \frac{m(M_\theta)}{2M_\theta} \right)^\frac{1}{2}.
\end{split}
\end{equation}

We are now ready to bound $\zeta$ when $M_\theta$ is small, i.e. when $M_\theta \leq \eta_{\overline a}(t)$.
%
%
Recall that when $\theta \geq \theta_d$, $m(\theta)/\theta$ is decreasing.  Due to this and \eqref{eq:phi_to_m}, together with the fact that $M_\theta \geq \theta_d$, we find 
\[
		\frac{\overline a t}{\sqrt2} \left(\frac{x}{\eta_{\overline a}(t)^{3/2}} \right)^\frac12
		\leq \frac{\overline a t}{\sqrt2} \frac{x}{\eta_{\overline a}(t)^{3/2}}
		\leq x \left( \frac{m(\eta_{\overline a}(t))}{2\eta_{\overline a}(t)} \right)^\frac{1}{2}
		\leq x \left( \frac{m(M_\theta)}{2M_\theta} \right)^\frac{1}{2}
		\leq \zeta(t,x,\theta).
\]
This concludes the estimate of $\zeta$ is this sub-case.

%
%

Now consider the case when $M_\theta$ is large, i.e.~$M_\theta \geq \eta_{\overline a}(t)$.  Young's inequality implies that
\begin{equation}\label{eq:zetabound2}
\zeta(t,x,\theta) \geq \int_0^{s_0} \left( \frac{|\dot Z_2|^2}{4} +  m(Z_2) \right) ds
		\geq \int_0^{s_0} \dot Z_2 \sqrt{m(Z_2)} ds
		= \int_0^{s_0} \partial_s (\Phi(Z_2(s))) ds
		= \Phi(M_\theta).
\end{equation}
Combining \eqref{eq:zetabound} and~\eqref{eq:zetabound2} and Young's inequality, we obtain, recalling also \eqref{eq:phi_to_m},
\begin{equation*}
\begin{split}
	\zeta(t,x,\theta) &= \frac{\zeta(t,x,\theta)}{2} + \frac{\zeta(t,x,\theta)}{2}
		\geq \frac12 x \Big( \frac{m(M_\theta)}{2M_\theta} \Big)^\frac{1}{2} + \frac{\Phi(M_\theta)}{2}
		\\ &\geq \left( x \Big( \frac{m(M_\theta)}{2M_\theta} \Big)^\frac{1}{2} \Phi(M_\theta)\right)^\frac12
		\geq \Bigg( \frac{x M_\theta^\frac{1}{2} m(M_\theta)}{2^\frac12 D_m} \Bigg)^\frac12.
\end{split}
\end{equation*}
From this and the fact that $M_\theta \geq \eta_{\overline a}(t)$, we conclude that
\begin{equation}
	\zeta(t,x,\theta)
		\geq \left( \frac{x \eta_{\overline a}(t)^\frac{1}{2} \frac{\Phi(\eta_{\overline a}(t))^2}{\eta_{\overline a}(t)^2} }{2^\frac12 D_m} \right)^\frac12 
		\geq \left( \frac{x \eta_{\overline a}(t)^{-\frac{3}{2}}}{2^\frac12 D_m} \right)^\frac12 \overline{a}t.
\end{equation}

\medskip

\noindent{\bf \# Second case: $\int_{B_-} Z_2(s) ds > \int_{B_+} Z_2(s) ds$.}

\medskip

In this case, it follows that
\[
	\int_0^t Z_2(s)ds
		\leq \int_{B_-}Z_2(s)ds + \int_{B_+}Z_2(s)ds
		\leq 2 \int_{B_-} Z_2(s) ds
		\leq 2 \theta_d t.
\]
Hence, we finish by plugging this into~\eqref{eq:rhosimple} to obtain the bound $\zeta(t,x,\theta) \geq x^2/(8\theta_d t)$.
\end{proof}

With \Cref{lem:rho_bound} in hand, we are in a position to finish the proof of the upper bound of $\overline n$.
\begin{proof}[{\bf Proof of Proposition~\ref{prop:upperboundacc}}]

Assume first that $\theta \geq \eta_{\gamma_\infty + 1}(t)$. Proposition~\ref{prop:upperboundacc} follows by combining \eqref{eq:Q_asymptotics} with the fact that a function of the form $Q(\theta)e^{\gamma_\infty  t}$ is a super-solution to \eqref{eq:main}.

Thus, we may assume that $\theta \leq \eta_{\gamma_\infty + 1}(t)$.
Fix $x \geq \eta_{\overline a}(t)^{3/2}$ with $\overline a$ to be determined.  Using the definition of $w$ along with \Cref{lem:Li_Yau}, we have that
\[
	w(t,x,\theta)
		\leq C e^{Ct} \sum_{\ell=0}^\infty \exp\left\{ - \zeta(t,x+\ell,\theta)\right\}.
\]
We obtain a very rough bound on $w$ in the following way:
\begin{equation}\label{eq:upper_estimate}
\begin{split}
	w(t,&x,\theta) \leq \frac{Ce^{Ct}}{t} \sum_{\ell = 0}^\infty \exp\Big\{ - \frac{1}{2C} \min\Big(\overline{a} t\sqrt{ \frac{x+\ell}{\eta_{\overline a}(t)^{3/2}}},\frac{(x+\ell)^2}{t}\Big)\Big\} \\
		&\leq \displaystyle\frac{Ce^{Ct}}{t} \sum_{\ell = 0}^\infty 
		 \Big(e^{ - \frac{\overline a t}{2C}\sqrt{\frac{x+\ell}{\eta_{\overline a}(t)^{3/2}}}}
			+e^{-\frac{(x+\ell)^2}{2Ct}}\Big)
		\leq C e^{Ct} \Big(\frac{\eta_{\overline a}(t)^{3/2}}{\overline a^2 t^2} e^{ - \frac{\overline a t}{C}\sqrt{\frac{x}{\eta_{\overline a}(t)^{3/2}}}}
			+ \frac{e^{-\frac{x^2}{Ct}}}{\sqrt{t}}\Big).
\end{split}
\end{equation}
For $t\geq 1$, it is clear from its definition that $\eta_{\overline a}(t) \gg t^{2/3}$ and, hence, that $x^2/t \gg t$.  On the other hand, since $x \geq \eta_{\overline a}(t)^{3/2}$, the exponent of the first term may be bounded below by $\overline a t / C$.  Using this and choosing $\overline a$ big enough, we see that the right hand side of~\eqref{eq:upper_estimate} may be bounded by $C_{\overline a}e^{-(r+1) t}$.

We now apply this bound to finish the claim.  Recalling the definition of $v$ and recalling that $w$ is a super-solution to $v$, if $x \geq \eta_{\overline a}(t)^{3/2}$,
\[
	\overline n(t,x,\theta)
		= \theta^{-1/4} e^{rt} v(t,x,\theta)
		\leq \underline\theta^{-1/4} e^{rt} w(t,x,\theta)
		\leq C \underline\theta^{-1/4}e^{-t}.
\]
\end{proof}

\subsection{The lower bound}

In order to finish the proof of \Cref{thm:acceleration}, we now need to prove the lower bound, i.e.~ we show:

\begin{prop}\label{prop:lowerboundacc}
Under the assumptions of \Cref{thm:acceleration}, there exists $\underline n, \underline{a}>0$ such that
	\[
		\liminf_{t \to \infty} \inf_{|x| \leq \eta_{\underline{a}}(t)^{3/2}} n(t,x,\underline\theta) \geq \underline n.
	\]
\end{prop}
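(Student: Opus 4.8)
The plan is to build a moving sub-solution whose support is a small ball in the $(x,\theta)$ plane that travels along a carefully chosen path $\gamma(s) = (\gamma_x(s), \gamma_\theta(s))$, reaching high traits (of order $\eta(t)$) and far spatial positions (of order $\eta(t)^{3/2}$). Because the equation \eqref{eq:main} is non-local and has no comparison principle, the first step is to reduce to a local problem: using \Cref{lem:harnack}, for bounded $\theta$ the non-local term $\rho(t,x)$ is controlled by $C_{R,\epsilon,t_0}\, n(t,x,\underline\theta) + \epsilon$, and using the a priori tail estimates on $n$ in trait (from \Cref{prop:uniform_upper_bound} together with the $Q(\theta)e^{\gamma_\infty t}$ super-solution bound) one controls the contribution to $\rho$ coming from large $\theta$. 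So along our trajectory we may replace $1 - m(\theta) - \rho$ by $1 - m(\theta) - \delta$ for a small fixed $\delta$ and work with a genuine sub-solution of a linear-in-$n$ local parabolic problem; the standard way to regain the desired $O(1)$ lower bound at $\theta = \underline\theta$ at the end is to iterate this construction in time, feeding the $\underline n$ from one step as initial data for the next.

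On each leg of the path I would take a sub-solution of the form $\underline n(t,x,\theta) = \kappa\, e^{\lambda(s)}\, \varphi\!\left(\frac{x - \gamma_x(s)}{R},\frac{\theta - \gamma_\theta(s)}{R}\right)$ where $\varphi$ is the principal Dirichlet eigenfunction of $\theta \partial_x^2 + \partial_\theta^2$ on a ball of radius $R$ (with principal eigenvalue $\mu_R \to 0$ as $R \to \infty$, and with the metric distortion from $\theta$ handled by keeping $R$ small compared to the current $\gamma_\theta$). Plugging in and using the eigenvalue equation, the sub-solution inequality reduces to an ODE of the form $\dot\lambda \le 1 - m(\gamma_\theta(s)) - \delta - \mu_R - (\text{cost of moving the center})$, where the movement cost is $\tfrac{|\dot\gamma_x|^2}{4\gamma_\theta} + \tfrac{|\dot\gamma_\theta|^2}{4}$ up to controllable errors — this is exactly the Lagrangian $L_m$ appearing in $\zeta$. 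So I set $\lambda(t) = -\zeta_{\text{path}}(t) + \gamma_\infty t$-type quantities along the chosen path, and the whole problem becomes: choose a path from $(0,\underline\theta)$ reaching $x \sim \eta_{\underline a}(t)^{3/2}$ by time $t$ such that the accumulated cost $\int_0^t L_m(\gamma,\dot\gamma)\,ds$ stays below $t$ (so that $\lambda(t)$ stays bounded below, keeping the bump of $O(1)$ height). Following the hint in the introduction, rather than solving the Euler–Lagrange system (which is not explicitly solvable), I would use an explicit \emph{rectangular} path: first raise the trait at controlled speed from $\underline\theta$ up to a level $\theta^* \sim \eta(t)$ (paying $\Phi(\theta^*) \sim t$ in the $|\dot\gamma_\theta|^2/4 + m$ terms, which is why $\theta^*\sim\eta(t)$ is the natural height), then translate in $x$ at the high trait level $\theta^*$ (where diffusivity is large, so $|\dot\gamma_x|^2/(4\theta^*)$ is cheap: distance $\eta(t)^{3/2}$ over time $\sim t$ costs $\sim \eta(t)^3/(t\,\eta(t)) = \eta(t)^2/t \sim t$), then lower the trait back down to $\underline\theta$. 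Optimizing the time split among the three legs gives the constants $\underline a, \underline c$. Along the high-trait leg one must absorb the large mortality $m(\theta^*)$: here I use the trick advertised in the text — insert a factor $e^{-\beta s}$ with $\beta$ large enough to dominate $m(\theta^*) - 1$, accept that the bump shrinks exponentially during this leg, then park the bump at a favorable location ($\theta$ near $\underline\theta$, where $1 - m \approx 1 > 0$ and $\mu_R$ small) for a long time at the end so the exponential re-growth $e^{(1-\mu_R-\delta)s}$ makes up all the accumulated loss, yielding a net $O(1)$ height. Careful bookkeeping of these losses versus the final re-growth is what pins down the precise exponent $3/2$.

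The main obstacle is this bookkeeping on the high-trait leg together with the passage from the linear bound back to an honest $O(1)$ constant for the full nonlocal equation. Unlike the classical setting, there is no well-posed eigenvalue producing an $O(1)$ stationary profile at high $\theta$; the bump genuinely decays while passing through the costly region, so one must quantify exactly how much total decay is incurred (controlled by $\Phi(\theta^*) + m(\theta^*)\cdot(\text{time at height }\theta^*)$) and ensure it is at most a constant multiple of $t$, then allocate a final window of length comparable to $t$ at low trait for recovery. Simultaneously, one must re-run the local-vs-nonlocal comparison at each time step — checking that the error $\epsilon$ in \Cref{lem:harnack} and the tail contribution to $\rho$ stay below the growth margin $1 - \mu_R$ — and that the radius $R$ can be chosen uniformly; getting all these constants to close is the delicate part, everything else is a (lengthy) but routine sub-solution verification. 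The final statement then follows by taking $|x| \le \eta_{\underline a}(t)^{3/2}$, noting by symmetry the path works for either sign of $x$, and reading off $\underline n$ as the recovered bump height.
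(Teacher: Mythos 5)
Your overall architecture is the same as the paper's: a Dirichlet ``bump'' sub-solution slid along a rectangular path (up in trait, across in space at high trait, back down), with the mortality losses absorbed by an exponential factor and recovered by parking the bump at low traits for a long final window, and with the non-local term handled through the uniform bound and the local-in-time Harnack inequality. However, there is a genuine gap in your time allocation along the path, and this is exactly the point on which the proof lives or dies. You traverse the horizontal leg at trait level $\theta^*\sim\eta(t)$ over a time of order $t$, and you only account for the kinetic cost $\eta(t)^2/t$. But during that leg the bump also decays at rate $m(\theta^*)\sim m(\eta(t))\to\infty$, so the accumulated loss is of order $m(\eta(t))\,t\gg t$. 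The recovery phase has growth rate bounded by the principal eigenvalue $\gamma_{r_0,s_0}<\gamma_\infty$ of $\theta\partial_x^2+\partial_\theta^2+(1-m)$ on a bounded box (not by $1-\mu_R-\delta$ as you write: on a box that is large in $\theta$ the trade-off $m$ is not small, so the limiting rate is $\gamma_\infty$, not $1$), and it lasts at most a time $t$; hence a loss $\gg t$ can never be made up, and your own bookkeeping criterion ``total decay at most a constant multiple of $t$'' fails along your path. The fix, which is the heart of the paper's proof, is to make all three moving legs short: each of duration $T_1\approx A\,\eta_{\underline a}(T)^2/T\ll T$, hence horizontal speed $\approx T/(A\sqrt{\eta_{\underline a}(T)})$. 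Then the kinetic cost is $\approx T/(4A)$ and the mortality cost is $\approx A\,m(\eta_{\underline a}(T))\,\eta_{\underline a}(T)^2/T\lesssim A\,D_m^2\,\underline a^2\,T$ by \eqref{eq:phi_to_m}; both are made small multiples of $T$ by first taking $A$ large and then $\underline a$ small, leaving a window of length $\approx T$ for regrowth at rate close to $\gamma_{r_0,s_0}$.

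A secondary imprecision: along the moving legs you cannot ``replace $1-m-\rho$ by $1-m-\delta$ with $\delta$ small''. \Cref{lem:harnack} only gives $\rho\le C\,n+\epsilon$, and there is no reason $n$ is small at the bottom trait underneath the path (the bump travels at $|x|\le\eta_{\underline a}(t)^{3/2}$ with $\underline a$ small, well inside the region where the population may be of order one). The paper instead uses the crude uniform bound $\rho\le\overline\rho$ from \Cref{prop:uniform_upper_bound} during the moving legs --- an $O(1)$ penalty that is harmless precisely because those legs have duration $T_1\ll T$ --- and reserves the Harnack comparison for the final low-trait growth phase, where the sub-solution is kept small enough that the quadratic term it generates is dominated, KPP-style. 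With these two corrections (short moving legs with the $\eta^2/T$ time scale, and the uniform bound on $\rho$ while moving) your outline coincides with the paper's argument.
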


\subsubsection*{The moving Dirichlet ball sub-solution}

As we mentioned in the introduction, to prove spreading, the idea is to construct sub-solutions to the linearized problem with Dirichlet boundary conditions on a moving boundary of a growing domain $\mathcal{E}(t)$.  Then we use them to deduce a lower bound on the solution of the nonlinear problem.  

When building this solution, the growth/decay rate depends on the speed of $\mathcal{E}(t)$ -- the faster $\mathcal{E}$ moves, the smaller the growth rate with the growth rate tending to negative infinity as the speed of $\mathcal{E}$ tends to infinity.  Thus, the goal is to balance two competing forces: when $\mathcal{E}$ is in the large $\theta$ region the correlation between the speed of $\mathcal{E}$ and the growth rate of $\mathcal{E}$ is weakest, i.e.~in the large $\theta$ region, $\mathcal{E}$ can move at a much faster rate in the $x$ direction than if it were in the small $\theta$ region with the same effect on the growth rate; on the other hand, when $\mathcal{E}$ is in the large $\theta$ region, the trade-off term $m$ is extremely strong causing the growth rate to be very negative.  In view of this, our goal is to find a trajectory that takes advantage of both the fast movement when $\mathcal{E}$ is in the large $\theta$ region and the positive growth rate when $\mathcal{E}$ is in the small $\theta$ region.

We thus state a lemma regarding sub-solutions on moving, growing ellipses. This lemma is very similar to \cite[Lemma~4.1]{BHR_Acceleration}.  Before we state the lemma, we define a piece of notation.  For any given trajectory $t \mapsto \left( X(t) , \Theta(t) \right) \in \R\times\Theta$ and $\Lambda > 0$, we denote 
\begin{equation}\label{eq:ellipse_defn}
 	\mathcal{E}_{t,\Lambda}^{\left( X , \Theta \right)}
		\stackrel{\rm def}{=}
		\Big\{
			(x,\theta) \in \R \times \overline\Theta : \frac{\vert x - X(t) \vert^2}{\Theta(t)} + \vert \theta - \Theta(t) \vert^2 \leq \Lambda^2
		\Big\}.
\end{equation}

\begin{lemma}\label{lem:movement}
Let $T > T_0$ and let $(X_T(t), \Theta_T(t)) \in \mathcal{C}^2([T_0,T])$ be a trajectory.  Fix constants $\Lambda$, $r$, and $\delta$. There exists $\epsilon_\Lambda > 0$, depending only on $\Lambda$ and which is decreasing in $\Lambda$, $T_\delta$, depending only on $\delta$, and $\Lambda_0$ such that if $\Lambda \geq \Lambda_0$, $T - T_0 \geq T_\delta$ and
\begin{equation}\label{eq:lemma_condition}
\begin{split}
\forall t\in [T_0,T], \qquad	\left|\frac{1}{\Theta_{T}(t)}\right|  +  \left|\frac{1}{2} \frac{\dot\Theta_{T}(t)}{\Theta_{T}(t)}\right| + \left|\frac{\dot X_{T}(t)}{\Theta_{T}(t)^{3/2}}\right| \leq \epsilon_\Lambda,
\end{split}
\end{equation}
then there exists a function $\underline n$ which satisfies
\begin{equation}\label{eq:subsolE}
\begin{cases}
\underline n_t - \theta \underline n_{xx} - \underline n_{\theta\theta} \leq  (1 - r)\underline n, & \qquad \text{on } [T_0,T] \times \mathcal{E}_{t,\Lambda}^{\left( X_T , \Theta_T \right)}, \smallskip\\
\underline n = 0,  &\qquad \text{on } [T_0,T] \times \partial\mathcal{E}_{t,\Lambda}^{\left( X_T , \Theta_T \right)}, 
\end{cases}
\end{equation}
and such that $\underline n(T_0,\cdot) \leq \delta$ on  $\mathcal{E}_{T_0,\Lambda}^{\left( X , \Theta \right)}$ and
\begin{equation}\label{eq:constantsliding}
\begin{split}
	\underline n(T,\cdot) \geq &\delta C_\Lambda \exp\Big\{ - \frac{\Lambda}{2} \max_{t\in[0,T]} \Big( |\dot\Theta_{T}(t)| + \frac{|\dot X_{T}(t)|}{\sqrt{\Theta_{T}(t)}} \Big)\\
	&~~~~~ - \int_{T_0}^{T} \Big[ r + \frac{\dot{X}_{T}^2}{4\Theta_{T}} + \frac{\dot{\Theta}_{T}^2}{4} + 
			\frac{|\ddot{X}_{T}|\Lambda}{2} + \frac{|\dot{X}_{T}\dot{\Theta}_{T}|\Lambda}{4\Theta_{T}}  
			+ \frac{\dot{X}_{T}^2 \Lambda}{4\Theta_{T}^2} + \frac{\Lambda|\ddot{\Theta}_{T}|}{2} \Big] dt \Big\}.
\end{split}
\end{equation}
on $\mathcal{E}_{T,\Lambda/2}^{\left( X , \Theta \right)}$, where  $C_\Lambda$ is a positive constant depending only on $\Lambda$.
\end{lemma}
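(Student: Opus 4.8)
The plan is to mimic \cite[Lemma~4.1]{BHR_Acceleration}: pass to a moving rescaled frame in which the ellipses become a fixed ball, build the sub-solution from the principal Dirichlet eigenfunction of the resulting frozen elliptic operator, and absorb every error term into the amplitude. Set $y = (x-X_T(t))/\sqrt{\Theta_T(t)}$ and $z = \theta - \Theta_T(t)$, so that by \eqref{eq:ellipse_defn} the ellipse $\mathcal E^{(X_T,\Theta_T)}_{t,\Lambda}$ is exactly the ball $B_\Lambda = \{y^2+z^2\le\Lambda^2\}$. Writing $\underline n(t,x,\theta)=u(t,y,z)$, a computation gives $\underline n_t - \theta\underline n_{xx} - \underline n_{\theta\theta} = u_t + \mathcal L_t u$ with $\mathcal L_t u = -(1+z/\Theta_T)u_{yy} - u_{zz} + \beta_1 u_y + \beta_2 u_z$, where $\beta_1 = -\dot X_T/\sqrt{\Theta_T} - y\dot\Theta_T/(2\Theta_T)$ and $\beta_2 = -\dot\Theta_T$. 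On $B_\Lambda$, the hypothesis \eqref{eq:lemma_condition} says precisely that the two ``extra'' coefficients $z/\Theta_T$ and $y\dot\Theta_T/(2\Theta_T)$ are $\le\Lambda\epsilon_\Lambda$; taking $\epsilon_\Lambda$ small also forces $\Theta_T\ge 1/\epsilon_\Lambda > \underline\theta+\Lambda$, so $B_\Lambda\subset\R\times\Theta$ and the Neumann condition at $\theta=\underline\theta$ never enters. For each $t$ let $(\mu_\Lambda(t),\Psi_\Lambda(t,\cdot))$ be the principal Dirichlet eigenpair of $\mathcal L_t$ on $B_\Lambda$ (Krein--Rutman), with $\Psi_\Lambda>0$ in $B_\Lambda$; since the coefficients are $C^1$ in $t$ and $\mu_\Lambda$ is simple, $(\mu_\Lambda,\Psi_\Lambda)$ depends $C^1$ on $t$.

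I take $\underline n(t,x,\theta) = \kappa(t)\Psi_\Lambda(t,y,z)$ on the ellipse, extended by zero outside; since $\Psi_\Lambda$ vanishes on $\partial\mathcal E$ with the correct sign of the normal derivative, this is a sub-solution across the boundary. Using $\mathcal L_t\Psi_\Lambda = \mu_\Lambda(t)\Psi_\Lambda$ one gets $\underline n_t - \theta\underline n_{xx} - \underline n_{\theta\theta} = (\dot\kappa + \mu_\Lambda\kappa)\Psi_\Lambda + \kappa\,\partial_t\Psi_\Lambda$. By Hopf's lemma, with constants uniform in $t$ thanks to the uniform coefficient bounds, $\Psi_\Lambda$ and $\partial_t\Psi_\Lambda$ vanish to the same first order on $\partial B_\Lambda$, so $\partial_t\Psi_\Lambda/\Psi_\Lambda\in L^\infty(B_\Lambda)$. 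Hence it suffices to take $\kappa$ solving $\dot\kappa = (1 - r - \mu_\Lambda(t) - \|\partial_t\Psi_\Lambda/\Psi_\Lambda\|_{L^\infty(B_\Lambda)})\kappa$, after which \eqref{eq:subsolE} holds (the worst point is where $\partial_t\Psi_\Lambda/\Psi_\Lambda$ is largest, which is why the full $L^\infty$ norm is subtracted).

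The whole lemma now rests on bounding $\mu_\Lambda(t)$ and $\|\partial_t\Psi_\Lambda/\Psi_\Lambda\|_\infty$ by the integrand of \eqref{eq:constantsliding}. The key point is that the drift $(\beta_1,\beta_2)$ is curl-free ($\partial_z\beta_1 = \partial_y\beta_2 = 0$), hence equals $\nabla\Psi$ for the explicit $\Psi(t,y,z) = -\frac{\dot X_T}{\sqrt{\Theta_T}}y - \frac{\dot\Theta_T}{4\Theta_T}y^2 - \dot\Theta_T z$. Conjugating $\mathcal L_t$ by $e^{\Psi/2}$ removes the entire first-order part and produces $-\Delta + W_t$, where $W_t = \frac14|\nabla\Psi|^2 - \frac12\Delta\Psi$ plus the conjugate of $-(z/\Theta_T)\partial_{yy}$; expanding and using \eqref{eq:lemma_condition} term by term — in particular its third term to control the non-gradient first-order remainder — one finds that the part of $W_t$ proportional to the eigenfunction is at most $\frac{\dot X_T^2}{4\Theta_T} + \frac{\dot\Theta_T^2}{4} + \frac{|\dot X_T\dot\Theta_T|\Lambda}{4\Theta_T} + \frac{\dot X_T^2\Lambda}{4\Theta_T^2}$ (using $\Theta_T\ge1$ to clear the $\sqrt{\Theta_T}$'s), and the remaining coefficients are $\le C_\Lambda\epsilon_\Lambda$. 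Standard perturbation theory for the principal eigenvalue then gives $\mu_\Lambda(t)\le\lambda_\Lambda + \frac{\dot X_T^2}{4\Theta_T} + \frac{\dot\Theta_T^2}{4} + \frac{|\dot X_T\dot\Theta_T|\Lambda}{4\Theta_T} + \frac{\dot X_T^2\Lambda}{4\Theta_T^2} + C_\Lambda\epsilon_\Lambda$, where $\lambda_\Lambda$ is the first Dirichlet eigenvalue of $-\Delta$ on $B_\Lambda$. The same conjugation shows $\Psi_\Lambda(t,\cdot) = e^{\Psi(t,\cdot)/2}(\phi + O(\epsilon_\Lambda))$ with $\phi$ the $t$-independent eigenfunction of $-\Delta$ on $B_\Lambda$, so $\partial_t\Psi_\Lambda/\Psi_\Lambda = \frac12\partial_t\Psi + O(\epsilon_\Lambda)$ and, on $B_\Lambda$, $|\frac12\partial_t\Psi|\le \frac{\Lambda|\ddot X_T|}{2} + \frac{\Lambda|\dot X_T\dot\Theta_T|}{4\Theta_T} + \frac{\Lambda|\ddot\Theta_T|}{2} + C_\Lambda\epsilon_\Lambda$. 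Obtaining these two estimates with constants uniform along the trajectory and landing \emph{exactly} on the list in \eqref{eq:constantsliding} is the technical heart; it parallels \cite[Lemma~4.1]{BHR_Acceleration} but must be redone because of the $\Theta$-dependent aspect ratio of the ellipses.

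Finally, fix $\Lambda_0$ with $\lambda_\Lambda\le\frac12$ for $\Lambda\ge\Lambda_0$, and given $\Lambda$ pick $\epsilon_\Lambda$ small and decreasing in $\Lambda$ so that $C_\Lambda\epsilon_\Lambda\le\frac12$ and the constraints above hold; then $1 - r - \mu_\Lambda - \|\partial_t\Psi_\Lambda/\Psi_\Lambda\|_\infty$ is at least minus the bracketed integrand of \eqref{eq:constantsliding} (the beneficial $\int_{T_0}^T 1\,dt\ge0$ being discarded). Integrating the ODE of the second step, and normalizing $\Psi_\Lambda$ together with $\kappa(T_0)$ so that $\underline n(T_0,\cdot)\le\delta$ on $\mathcal E_{T_0,\Lambda}$, the factor $e^{\Psi/2}$ contributes exactly $-\frac\Lambda2\max_t(|\dot\Theta_T| + |\dot X_T|/\sqrt{\Theta_T})$ when one passes from $\sup_{\mathcal E_{T_0,\Lambda}}\Psi_\Lambda(T_0,\cdot)$ to $\inf_{\mathcal E_{T,\Lambda/2}}\Psi_\Lambda(T,\cdot)$, with $C_\Lambda$ absorbing $\sup\phi$, $\inf_{B_{\Lambda/2}}\phi$ and the negligible $e^{\pm O(\epsilon_\Lambda\Lambda^2)}$ corrections; this yields $\underline n(T,\cdot)\ge M\delta$ on $\mathcal E_{T,\Lambda/2}$ with $M$ as in \eqref{eq:constantsliding}, provided $T - T_0\ge T_\delta$ (a lower bound used, as in \cite{BHR_Acceleration}, only so the lower-order accounting closes). \textbf{The main obstacle} is exactly the third step: one cannot use a pointwise sub-solution of the form $\kappa(t)e^{\Psi/2}\phi$ on the fixed ball, because the small but non-gradient first-order remainder coming from $-(z/\Theta_T)\partial_{yy}$ is not proportional to $\phi$ and does not vanish on $\partial B_\Lambda$, so its ratio to $\phi$ blows up there; using the eigenfunction of the full frozen operator cures this, at the cost of needing sharp, trajectory-uniform control of $\mu_\Lambda(t)$ and of $\partial_t\Psi_\Lambda/\Psi_\Lambda$ up to the boundary, and of tracking every constant precisely enough to reproduce \eqref{eq:constantsliding}. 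Everything else — the change of frame, Krein--Rutman, the amplitude ODE, the normalization — is routine.
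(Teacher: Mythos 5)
Your strategy is essentially the paper's: pass to the moving rescaled frame where the ellipse becomes $B_\Lambda$, tilt by an exponential, build the sub-solution from the time-dependent principal Dirichlet eigenfunction of the frozen operator, run an amplitude ODE, and recover the constant $C_\Lambda$ on the half-ball from Hopf's lemma/uniform convergence of the eigenfunctions. The only real difference is the packaging of the tilt: the paper removes a \emph{purely linear} exponential $e^{\frac12(\dot X_T y/\sqrt{\Theta_T}+\dot\Theta_T\eta)+g(t)}$, with $g(t)$ chosen explicitly to absorb the zeroth-order remainders (this is exactly where the integral in \eqref{eq:constantsliding} comes from), and keeps the term $\tfrac{y}{2}\tfrac{\dot\Theta_T}{\Theta_T}\partial_y$ inside the small drift $A$ of the eigenproblem \eqref{eq:L}, handled by perturbation (the cited BHR lemmas); you instead take the eigenpair of the full drift operator and conjugate afterwards by $e^{\Psi/2}$ with the quadratic term $-\tfrac{\dot\Theta_T}{4\Theta_T}y^2$ included so that the drift is exactly $\nabla\Psi$. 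That quadratic term is the one place where your accounting does not "land exactly" on \eqref{eq:constantsliding}: it contributes to the potential a term of order $\Lambda^2\dot\Theta_T^2/\Theta_T^2$ and to $\tfrac12\partial_t\Psi$ a term of order $\Lambda^2\bigl(|\ddot\Theta_T|/\Theta_T+\dot\Theta_T^2/\Theta_T^2\bigr)$; these are trajectory-dependent, so they cannot be hidden in $C_\Lambda$ or in an additive $C_\Lambda\epsilon_\Lambda$, only dominated by $O(\Lambda\epsilon_\Lambda)$-multiples of the listed terms $\dot\Theta_T^2/4$ and $\Lambda|\ddot\Theta_T|/2$. So as written you obtain \eqref{eq:constantsliding} with slightly inflated coefficients (harmless for the application, but not the stated formula); the paper's linear-tilt bookkeeping avoids this, and if you adopt it your argument coincides with the paper's proof. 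A second, smaller caveat: your identification $\partial_t\Psi_\Lambda/\Psi_\Lambda=\tfrac12\partial_t\Psi+O(\epsilon_\Lambda)$ requires differentiating the $O(\epsilon_\Lambda)$ eigenfunction correction in $t$, which involves $\ddot X_T,\ddot\Theta_T$ through the coefficients and is not controlled by \eqref{eq:lemma_condition} alone; the paper delegates exactly this point to the quantitative eigenvalue/eigenfunction perturbation lemmas of the earlier work, and you would need the analogous statements here.
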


\begin{proof}
Since the lemma is very similar to the one in \cite{BHR_Acceleration}, we present a streamlined version of the proof. We recall that the idea of the proof is to suitably re-scale the equation and then use careful time-dependent spectral estimates.

First note that, without loss of generality, we may set $T_0 = 0$.  To construct the desired sub-solution $\underline n$, we first go into the moving frame, and rescale the spatial variable:
\begin{equation}\label{eq:v}
\underline{n}(t,x,\theta) = \underline{\tilde n}\left( t, \frac{x - X_T}{\sqrt{\Theta_T}}, \theta - \Theta_T\right),~~~
y = \frac{x - X_T}{\sqrt{\Theta_T}}, ~~\text{ and }~~ \eta = \theta - \Theta_T.
\end{equation}
Then plugging this into \eqref{eq:subsolE} yields  
\begin{equation}\label{eq:v_inequality}
\begin{cases}
	\displaystyle\underline{\tilde n}_t  - \Big(\frac{y}{2} \frac{\dot\Theta_T}{\Theta_T} + \frac{\dot{X}_T}{\sqrt{\Theta_T}} \Big)\underline{\tilde n}_y  - \dot\Theta_T \underline{\tilde n}_\eta \leq \Big(1 + \frac{\eta}{\Theta_T}\Big) \underline{\tilde n}_{yy} + \underline{\tilde n}_{\eta\eta} + (1 - r) \underline{\tilde n}, &\text{on } \mathcal{B}_\Lambda,\\
\underline{\tilde n}(t,\cdot) = 0, &\text{on } \partial\mathcal{B}_\Lambda.	
	\end{cases}
\end{equation}
Here, $\mathcal{B}_\Lambda\stackrel{\rm def}{=}B_\Lambda(0,0)$ is a ball of radius $\Lambda$ centered at $(y,\eta) = (0,0)$.   
%
As in~\cite{BHR_Acceleration}, the next step is to remove a suitable exponential:
\[
w(t,y,\eta) = \exp\Big\{ \frac12\big(y\,\dot{X_T}\, \Theta_T^{-1/2} + \dot\Theta_T\, \eta \big) + g(t)\Big\} \underline{\tilde n}(t,y,\eta),
\]
where $g$ is defined below. Using~\eqref{eq:v_inequality}, we see that $w$ must satisfy the inequality
\begin{equation}\begin{split}
	w_t - &\underbrace{\Big(\frac{y}{2}\frac{\dot\Theta_T}{\Theta_T} - \frac{\dot{X_T}}{\Theta_T^{\frac32}}\eta\Big)}_{\stackrel{\text{def}}{=} A} w_y
		\leq \underbrace{\Big(1 + \frac{\eta}{\Theta_T}\Big)}_{\stackrel{\text{def}}{=} D} w_{yy} + w_{\eta\eta}\\
			& + w\Big( 1 - r - \frac{\dot{X}_T^2}{4\Theta_T} - \frac{\dot{\Theta}_T^2}{4} + 
			\Big( \frac{\ddot{X}_T}{2\sqrt{\Theta_T}} - \frac{\dot{X}_T\dot{\Theta}_T}{4\Theta_T^{\frac32}} \Big) y 
			+ \Big( \frac{\dot{X}_T^2}{4\Theta_T^2} + \frac{\ddot{\Theta}_T}{2} \Big) \eta + g' \Big).
\end{split}
\label{eq:trajectories}
\end{equation}

We point out that each of the perturbative terms $A$ and $D - 1$ tend to zero as $\Lambda$ tends to infinity by the hypothesis \eqref{eq:lemma_condition}. We get rid of the supplementary terms in the growth part, by setting
\begin{equation*}
g(t) := \int_0^{t} \left[r + \frac{\dot{X}_{T}^2}{4\Theta_{T}} + \frac{\dot{\Theta}_{T}^2}{4} + 
			\frac{|\ddot{X}_{T}|\Lambda}{2 \sqrt{\Theta_T}} + \frac{|\dot{X}_{T}\dot{\Theta}_{T}|\Lambda}{4\Theta_{T}^{\frac32}}  
			+ \frac{\dot{X}_{T}^2 \Lambda}{4\Theta_{T}^2} + \frac{\Lambda|\ddot{\Theta}_{T}|}{2} \right] dt'.
\end{equation*}

%

Returning to the original variables, if $w$ satisfies
\begin{equation}\label{eq:key_differential_inequality}
	w_t - A w_y \leq D w_{yy} + w_{\eta\eta} + w,
\end{equation}
then $v$ satisfies the desired differential inequality.  
With this in mind, we seek to construct $w$ satisfying~\eqref{eq:key_differential_inequality} that has the desired bounds.

We define $w$ using the principal eigenfunction of the operator
\begin{equation}\label{eq:L}
	\mathcal{L}_t \stackrel{\text{def}}{=} A \partial_y + D \partial_{yy} + \partial_{\eta\eta}.
\end{equation}
To this end, for each $t \in [0,T]$, define $(\lambda^t,\phi^t)$ to be the principal Dirichlet eigenelements of $\mathcal{L}_t$ (depending on $t$ as a parameter) in the ball $\mathcal{B}_\Lambda$,
with the normalization $\|\phi^t\|_{L^\infty \left(\mathcal{B}_\Lambda\right)} = 1.$
We define $w(t,y,\eta) = \delta \phi^t(y,\eta)$. 
Then, we  have
\begin{equation*}
\left( \partial_t - \mathcal{L}_t \right) w = \frac{\partial_t \phi^t}{\phi^t} w - \lambda_t w,
\end{equation*}
and
$w$ satisfies~\eqref{eq:key_differential_inequality} if we are able to ensure that 
\begin{equation*}
	\frac{\partial_t \phi^t}{\phi^t}  - \lambda_t \leq 1,
\end{equation*}
which follows from~\cite[Lemmas 5.1 and 5.2]{BHR_Acceleration} along with~\eqref{eq:lemma_condition}.

Again, due to~\cite[Lemma 5.1]{BHR_Acceleration}, $\phi^t$ converges uniformly to the principal Dirichlet eigenfunction of $B_\Lambda$ as $\epsilon_\Lambda$ tends to zero.  Hence, by choosing $\epsilon_\Lambda$ small enough, we obtain a constant $C_\Lambda$ such that
\[
	\delta C_\Lambda
		\leq \delta \min_{\mathcal{B}_{\Lambda/2}} \phi^T
		=  \min_{\mathcal{B}_{\Lambda/2}} w(T,\cdot).
\]
By undoing the change of variables and using the relationship between $u$ and $w$, we are finished.
\end{proof}

\subsubsection*{The proof of Proposition~\ref{prop:lowerboundacc}: moving along a particular trajectory}

\begin{proof}[{\bf Proof of Proposition~\ref{prop:lowerboundacc}}]

Here, we describe the trajectory on which we apply \Cref{lem:movement}. It consists of three steps. First, we move mass upwards. Of course, this mass reaches a place where the death rate is highly negative due to the strength of the trade-off. This movement is justified by the second part of our trajectory, where we are able to move forward in space with a very high velocity, since the space diffusivity is very high in this zone. However, due to the strong trade-off, the mass at the end of this second step is extremely small. It is thus mandatory to move down to small traits again, to reach a zone where it is possible for the population to grow. Finally, in this region, we grow the population to order one in the last step. We show this trajectory in \cref{fig:trajectory}.

\begin{figure}[h]
\begin{center}
\includegraphics[width = .87\linewidth]{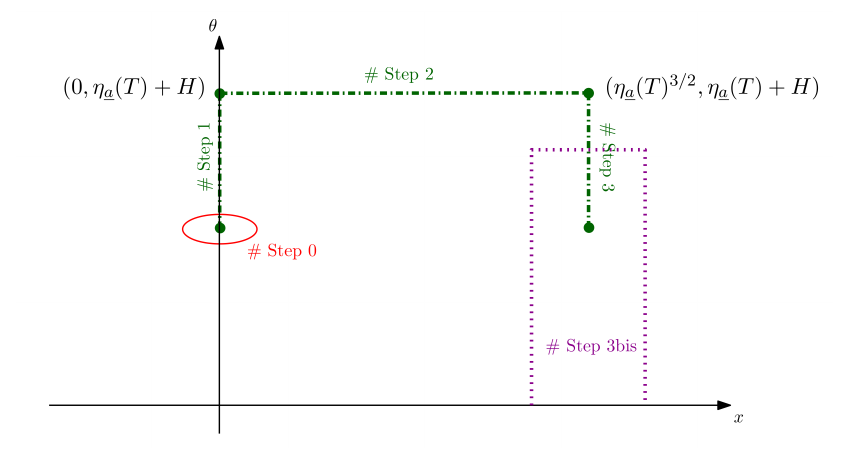}
\caption{The trajectory and the different steps.}
\label{fig:trajectory}
\end{center}
\end{figure}



This strategy of moving along trajectories in the phase space $\R\times\Theta$ was used in \cite{BHR_Acceleration} to prove a precise estimate on acceleration in the case $m\equiv 0$. There, it is shown that taking trajectories that come from a related Hamilton-Jacobi equation gives a sharp bound for the local cane toads equation. Here, we are not able to explicitly solve the Euler-Lagrange equations to derive the expression of optimal Hamilton-Jacobi trajectories due to the presence of the trade-off.

Fix any large time $T>0$.  We now describe quantitatively the three steps mentioned earlier.

\smallskip

{\bf \# Step 0: Initialization}

\smallskip

By the maximum principle, $n$ must be positive everywhere at time $t=1$.  As such, we assume without loss of generality that $n$ is positive everywhere initially.

\smallskip

{\bf \# Step 1 : Moving up}

\smallskip

First, fix constants $\underline a$, $A$, $\Lambda_1$, and $H$ to be determined later, where we eventually choose $\underline{a}$ to be small and $A$, $\Lambda_1$, and $H$ to be large.  We move mass upwards.  To this end, define the trajectory 
\begin{equation}\label{eq:upward_trajectory}
	\left(X_1(t),\Theta_1(t)\right) \stackrel{\rm def}{=} \left( 0 ,  (c_1 t)^2 + H\right),
\end{equation}
for $t\in[0,T_1]$, where we have chosen $c_1 = T/A\eta_{\underline a}(T)^{3/2}$ and $T_1 = A \eta_{\underline a}(T)^2 / T$.
We point out that \eqref{eq:phi_to_m} implies that $\lim_{T \to \infty}\eta_{\underline a}(T)/T =0$. Hence $T_1 \ll T$ when $T$ is large.  Notice that the definition of the trajectory \eqref{eq:upward_trajectory} implies that $\Theta_1(T_1) = \eta_{\underline a}(T) + H$.  By choosing $H$ sufficiently large, depending on $\Lambda_1$, we have 
\begin{equation}
\begin{split}
\forall t\in [0,T_1], \qquad	\Big|\frac{1}{\Theta_{1}}\Big|  +  \Big|\frac{1}{2} \frac{\dot\Theta_{1}}{\Theta_{1}}\Big| + \Big|\frac{\dot X_{1}}{\Theta_{1}^{3/2}}\Big| \leq \epsilon_\Lambda,
\end{split}
\end{equation}
and we have met the conditions of \Cref{lem:movement}, which we apply in the sequel.  

We emphasize two points coming from the definition of the trajectory.  First, it is crucial to make the trajectory start at $\theta = H$ in order to satisfy that $\Theta_1(t)^{-1} \leq \epsilon_\Lambda$ for all $t$.  Second, while trajectories that are linear in time would be simpler, this is not possible here as we require that $\dot\Theta_1(t)/(2\Theta_1(t)) \leq \epsilon_\Lambda$.  A linear-in-time trajectory which ends at $O(\eta_{\underline a}(T))$ would not satisfy this inequality for small times.

We seek a sub-solution of $n$ on the set $\mathcal{E}_{t,\Lambda_1}^{\left( X_1 , \Theta_1 \right)}$. To this end, first notice that from Proposition~\ref{prop:uniform_upper_bound}, we know that there exists $\overline\rho$ such that $\rho \leq \overline \rho$ on $\R^+ \times \R\times\Theta$.  In addition, since $m$ is monotonic by \Cref{hyp:m}, $m(\theta) \leq m(\eta_{\underline a}(T) + H + \Lambda_1)$ on $\mathcal{E}_{t,\Lambda_1}^{\left( X_1 , \Theta_1 \right)}$.  Hence, any $\underline n$ solving 
\[\begin{cases}
	\underline n_t - \theta \underline n_{xx} - \underline n_{\theta\theta} \leq  \left(1 - \overline \rho - m \left(\eta_{\underline a}(T) + H + \Lambda_1 \right) \right)\underline n, & \qquad \text{on } [0,T_1] \times \mathcal{E}_{t,\Lambda_1}^{\left( X_1 , \Theta_1 \right)},\smallskip\\
	\underline n = 0,  &\qquad \text{on } [0,T_1] \times \partial\mathcal{E}_{t,\Lambda_1}^{\left( X_1 , \Theta_1 \right)}, 
\end{cases}\]
is a sub-solution to $n$.  Define
\[
	\delta_1^{\Lambda_1,H} \stackrel{\rm def}{=} \inf_{\mathcal{E}_{0,\Lambda_1}^{\left( X_1 , \Theta_1 \right)}} n(0,\cdot).
\]
Then, applying \Cref{lem:movement} with starting time $T_0 = 0$ and final time $T_1$, $r = \overline \rho + m(\eta_{\underline a}(T) + H + \Lambda_1)$, $\Lambda = \Lambda_1$, and $\delta = \delta_1$, we obtain $\underline{n_1}$  such that $\underline{n_1}(0,\cdot) \leq n(0,\cdot)$ on $\mathcal{E}_{0,\Lambda}^{\left( X_1 , \Theta_1 \right)}$.

We may then use the comparison principle to compare $\underline n_1$ and $n$ on $\mathcal{E}_{T_1,\Lambda_1/2}^{(X_1,\Theta_1)}$ at $t=T_1$.  Using~\eqref{eq:constantsliding}, we obtain
\begin{equation}\label{eq:size_after_sliding1}
\begin{split}
	n(T_1, x, \theta)
		&\geq \delta_1^{\Lambda_1,H}
			C_{\Lambda_1}
			\exp\left\{ - \Lambda_1 c_1^2 T_1 - \left( r T_1 + c_1^4 \frac{T_1^3}{3} + \Lambda_1 c_1^2 T_1 \right) \right\}\\
		&= \delta_1^{\Lambda_1,H}
			C_{\Lambda_1}
			\exp\left\{ -  2\Lambda_1 c_1^2 T_1 - r T_1 - \frac{c_1^4T_1^3}{3} \right\}\\
		&= \delta_1^{\Lambda_1,H}
			C_{\Lambda_1}
			\exp\left\{ - \frac{2\Lambda_1 T}{A \eta_{\underline a}(T)} - \frac{A \eta_{\underline a}(T)^2}{T} \left( \overline\rho + m(\eta_{\underline a}(T) + H + \Lambda_1) \right) - \frac{T}{3A}\right\}\\
		&= \delta_1^{\Lambda_1,H}
			C_{\Lambda_1}
			\exp\left\{ - T \left(\frac{1}{3A} + \frac{2\Lambda_1}{A \eta_{\underline a}(T)} + \frac{A \eta_{\underline a}(T)^2}{T^2} \left( \overline\rho + m(\eta_{\underline a}(T) + H + \Lambda_1) \right) \right)\right\}\\
		&\geq \delta_1^{\Lambda_1,H} C_{\Lambda_1} \exp\left\{ - T\left(\underline aA + \frac{2}{3A}\right)\right\}
		\geq \delta_1^{\Lambda_1,H} C_{\Lambda_1} \exp\left\{ - \frac{\gamma_\infty}{10}T\right\},
\end{split}
\end{equation}
for all $(x,\theta) \in \mathcal{E}_{T_1,\Lambda_1/2}^{(X_1,\Theta_1)}$. We comment on how we have obtained the second-to-last and last inequalities. 
The second-to-last is due to the following facts. First, since $\lim_{T \to \infty}\eta_{\underline a}(T) = + \infty$, one can take $T$ sufficiently large depending only on $\underline{a}$, and $\Lambda_1$, such that $2\Lambda_1/\eta_{\underline a}(T) \leq 1/3$. Second, since $\lim_{T \to \infty}\eta_{\underline a}(T)/T =0$, if $T$ is sufficiently large, depending only on $a$, then $\eta_{\underline a}(T)^2 \overline\rho /T^2 \leq \underline{a}/2$. Finally, if $T$ is taken sufficiently large so that $\eta_{\underline a}(T) \geq \Lambda_1 + H$, then we  have 
\begin{equation*}
	\frac{\eta_{\underline a}(T)^2}{T^2} m(\eta_{\underline a}(T) + H + \Lambda_1)
		\leq \frac{\eta_{\underline a}(T)^2}{T^2} m(2\eta_{\underline a}(T))
		\leq 2\frac{\eta_{\underline a}(T)^2}{T^2} D_m^2 \left( \frac{\Phi(\eta_{\underline a}(T))}{\eta_{\underline a}(T)} \right)^2
		= 2 D_m^2 \underline{a}^2 \leq \frac{\underline{a}}{2},
\end{equation*}
when $\underline{a}$ is taken sufficiently small, after using the monotonicity of $m$ and \eqref{eq:phi_to_m}.  The last inequality of \eqref{eq:size_after_sliding1} follows from choosing $A$ sufficiently large and then $\underline{a}$ sufficiently small.

\smallskip

{\bf \# Step 2 : Moving right}

\smallskip

We now build a new moving bump sub-solution starting at $t = T_1$ and $(0,\eta_{\underline a}(T) + H)$ which moves mass to the right at a high speed.  It is important to note that $\underline{n_1}$ and $\underline{n_2}$ may not be concatenated to be a smooth sub-solution.  Instead, we ``fit'' our new sub-solution $\underline{n_2}$ underneath $n$ at time $T_1$ using the lower bound we obtained from $\underline{n_1}$.

For $t\in[T_1, 2T_1]$, we define the trajectory
\[
	(X_2(t), \Theta_2(t)) \stackrel{\rm def}{=} (c_2 (t-T_1), \eta_{\underline a}(T) + H),
\]
where $c_2$ is chosen so $A \sqrt{\eta_{\underline a}(T)} c_2 = T$.  We point out that $X_2(2T_1) = \eta_{\underline a}(T)^{3/2}$, by our choice of $c_2$ and $T_1$, and that $(X_2(T_1), \Theta_2(T_1)) = (X_1(T_1), \Theta_1(T_1))$.

Set $\Lambda_2 = \Lambda_1/2$.  It is easy to check that this trajectory satisfies~\eqref{eq:lemma_condition} for $T$ sufficiently large since $c_2 \eta_{\underline a}(T)^{-3/2} = T/ (A \eta_{\underline a}(T)^2)$ tends to zero.  Letting $r$ be as before, $\Lambda = \Lambda_2$, and $\delta_2 = C_{\Lambda_1} \delta_1^{\Lambda_1,H} \exp\left\{ - \gamma_\infty T/10\right\}$, we apply \Cref{lem:movement} again to obtain $\underline{n_2}$ satisfying~\eqref{eq:subsolE} with
\begin{equation}\label{eq:subsoln_sliding2}
	\underline{n_2}(T_1,x,\theta) \leq \delta_2 \leq n(T_1,x,\theta)
\end{equation}
for all $(x,\theta) \in \mathcal{E}_{T_1,\Lambda_2}^{(X_2,\Theta_2)}$.  The second inequality above comes from~\eqref{eq:size_after_sliding1} and the definition of $\delta_2$.  Since~\eqref{eq:subsoln_sliding2} and~\eqref{eq:subsolE} guarantee that $\underline{n_2} \leq n$, then~\eqref{eq:constantsliding} gives us that, for all $(x,\theta) \in \mathcal{E}_{2T_2,\Lambda_2/2}^{(X_2,\Theta_2)}$,
\[\begin{split}
	n(2T_1, x,\theta)
		&\geq \underline{n_2}(2T_1,x,\theta)
		\geq \left(C_{\Lambda_1} \delta_1^{\Lambda_1,H} e^{- \frac{\gamma_\infty T}{10}}\right) \times \\
		&\qquad \left(C_{\Lambda_2} \exp\left\{ - \frac{3\Lambda_2}{4}\frac{T}{A\eta_{\underline a}(T)} - \frac{A \overline \rho \eta_{\underline a}(T)^2}{T} - \frac{A m(\eta_{\underline a}(T) + H + \Lambda_1) \eta_{\underline a}(T)^2}{T}- \frac{T}{4A} \right\}\right).
\end{split}\]
Exactly as before, so long as $\underline{a}$ is sufficiently small and $A$ and $T$ are sufficiently large, we may estimate the the bottom line of the equation above exactly as in~\eqref{eq:size_after_sliding1} to obtain the bound
\[
	n(2T_1, x,\theta)
		\geq C_{\Lambda_1} \delta_1^{\Lambda_1,H}  e^{- \frac{\gamma_\infty T}{10}}e^{- \frac{\gamma_\infty T}{10}}
		\geq C_{\Lambda_1} \delta_1^{\Lambda_1,H} e^{- \frac{\gamma_\infty T}{5}}.
\]
Here we also used the relationship between $\Lambda_1$ and $\Lambda_2$.

\smallskip

{\bf \# Step 3: Moving down}

\smallskip

The argument here is almost exactly as in the last two steps so we only briefly outline it.  Define
\[
	(X_3(t), \Theta_3(t)) = (\eta_{\underline a}(T)^{3/2}, \Theta_1(3T_1 - t))
\]
for all $t\in[2T_1, 3T_1]$.  We notice a few things.  First, $(X_3(2T_1),\Theta_3(2T_1)) = (X_2(2T_1),\Theta_2(2T_1))$.  Second, up to translation, $\Theta_3$ is the time reversal of $\Theta_1$.  Third, $(X_3(3T_1), \Theta_3(3T_1)) = (\eta_{\underline a}(t)^{3/2}, H)$.  We recall that we must stop at $H$ because below $H$, the conditions of \Cref{lem:movement} are not met.

Hence, taking $\Lambda_3 = \Lambda_2/2$, we may argue as before to obtain the lower bound
\[
	n(3T_1,x,\theta) \geq C_{\Lambda_1} \delta_1^{\Lambda_1,H} e^{- \frac{3\gamma_\infty T}{10}},
\]
for all $(x,\theta) \in \mathcal{E}_{3T_1,\Lambda_3/2}^{X_3,\Theta_3}$.  To be clear, we note that 
\[
	 \mathcal{E}_{3T_1,\Lambda_3/2}^{X_3,\Theta_3}
		= \Big\{(x,\theta) \in \R\times\Theta: \frac{(x - \eta_{\underline a}(T)^{3/2})^2}{\eta_{\underline a}(T)} + (\theta - H)^2 \leq \frac{\Lambda_1}{16}
		\Big\}.
\]

\smallskip

{\bf \# Step 3bis: Moving to near $\underline \theta$ and growing larger}

\smallskip

At this point, we know that $n$ is bounded below by a small value far to the right at time $3T_1$.  Since $T_1 \ll T$ when $T$ is large, we have a large amount of time left for $n$ to grow to order one.

To begin, we obtain estimates on the growth by using the principal eigenvalue of the cane toads operator with trade-off on a large enough domain.  For $r,s>0$, consider the following spectral problem in both variables $(\xi,\theta)$:
\begin{equation*}
\begin{cases}
\theta \varphi_{\xi\xi}   +  \varphi_{\theta\theta}  + (1-m) \varphi   = \gamma_{r,s} \varphi \,, \qquad \text{ on } (\eta_{\underline a}(T)-r,\eta_{\underline a}(T)+r) \times (\underline\theta, \underline\theta + s),\\
\varphi_\theta (\cdot,\underline\theta) = 0\,, \quad \varphi (\cdot,\underline\theta + s) = 0\,, \quad \varphi(\eta_{\underline a}(T)\pm r,\cdot) = 0.
\end{cases}
\end{equation*}
The eigenvector (up to a multiplicative constant) $\varphi$ is given by 
\begin{equation*}
\varphi(\xi,\theta)= \cos\left( \frac{\pi}{2}\frac{\xi-\eta_{\underline a}(T)}{r}\right) V_{r,s}(\theta),
\end{equation*}
where we have introduced the eigenelements $(\gamma_{r,s},V_{r,s})$ with normalization $\|V_{r,s}\|_\infty = 1$ by
\begin{equation*}
\begin{cases}
V_{r,s}''  + \left( - \frac{\pi^2 \theta}{4r^2} - \gamma_{r,s} + (1-m) \right) V_{r,s}   = 0 \,, \quad \theta \in (\underline\theta, \underline\theta + s),\\
V_{r,s}'(\underline\theta) = 0, \quad V_{r,s}(\underline\theta + s) = 0, \quad V_{r,s} > 0.
\end{cases}
\end{equation*}
One can pass to the limit $\lim_{s\to\infty}\lim_{r\to\infty} \gamma_{r,s} = \gamma_\infty$ since $\lim_{s\to\infty}\lim_{r\to\infty} V_{r,s} = Q$.  Thus we may fix $r_0$ and $s_0$ sufficiently large so that $\gamma_{r_0,s_0} > 8\gamma_\infty/10$.

Define $U_{r_0,s_0} = [\eta_{\underline a}(T)-r_0,\eta_{\underline a}(T)+r_0] \times [\underline\theta, \underline\theta + s_0]$.  We now apply the Harnack inequality on $[3T_1, 3T_1 + 1] \times U_{r_0,s_0}$ to obtain
\begin{equation}\label{eq:n_lower_bound_harnack}
	C e^{- \frac{3\gamma_\infty T}{10}}
		\leq \sup_{U_{r_0,s_0}} n(3T_1, \cdot, \cdot)
		\leq C_{r_0,s_0} \inf_{U_{r_0,s_0}} n(3T_1 + 1, \cdot, \cdot),
\end{equation}
where we combined $C_{\Lambda_1} \delta_1^{\Lambda_1,H}$ into one constant $C$ above.

We wish to use $\varphi$ and the bound above to create a sub-solution of $n$.
To deal with the nonlocal term, we use the Harnack inequality given by \Cref{lem:harnack}, to obtain,
\[
	\rho(t,x) \leq C_{s_0}' n(t,x,\theta) + \frac{\gamma_\infty}{10},
\]
for all $(x,\theta) \in U_{r_0,s_0}$.  Hence we have that
\begin{equation}\label{eq:n_diff_inequality}
n_t \geq \theta n_{xx} + n_{\theta\theta} + n \left(1 - \frac{\gamma_\infty}{10} - m - C_{r_0,s_0}' n \right).
\end{equation}
Now define
\[
	w(t,x,\theta) = \min\left(\frac{\gamma_\infty}{10C_{r_0,s_0}'},\frac{C}{C_{r_0,s_0} +1} \right) e^{-\frac{3\gamma_\infty T}{10}} e^{\lambda'(t - 3T_1 - 1)} \varphi(x,\theta),
\]
on $[3T_1 + 1, T]\times U_{r_0,s_0}$, where we set $\lambda' = (3\gamma_\infty T)/(10(T - 3T_1 - 1)).$ We point out that $\lambda' - \gamma_{r_0,s_0} + \gamma_\infty/10 \leq -\gamma_\infty/10$ for $T$ sufficiently large, since then $\lambda' \leq 4\gamma_\infty/10$ and $\gamma_{r_0,s_0} \geq 8\gamma_\infty/10$.  In addition, by the choice of $\lambda'$, we have that $C_{r_0,s_0}' w \leq \gamma_\infty/10$ for all $t \in [3T_1+1,T]$.  Our goal is to compare $w$ and $n$.  This is possible since $w$ satisfies the equation
\begin{equation}\label{eq:w2}
	w_t - \theta w_{xx} - w_{\theta\theta} - w\left(1 - \frac{\gamma_\infty}{10}- m\right)
		= \left(\lambda' - \gamma_{r_0,s_0} + \frac{\gamma_\infty}{10} \right)w \leq - \frac{\gamma_\infty}{10} w  \leq -C_{r_0,s_0}' w^2.
\end{equation}
Thus \eqref{eq:w2} and~\eqref{eq:n_diff_inequality} implies that $n$ is a super-solution to $w$.

In addition, due to~\eqref{eq:n_lower_bound_harnack}, we have that $w(3T_1+1,\cdot) \leq n(3T_1+1,\cdot)$ on $U_{r_0,s_0}$.  Hence, the maximum principle implies that $w \leq n$ on $[3T_1+1,T]\times U_{r_0,s_0}$.  

Evaluating this at time $t = T$ and position $(x,\theta) = (\eta_{\underline a}(T)^{3/2},\underline\theta)$ yields
\[
		{\frac{\gamma_\infty}{10C_{r_0,s_0} C_{r_0,s_0}'}V_{r,s}(\underline\theta)}
		=\frac{\gamma_\infty}{10C_{r_0,s_0} C_{r_0,s_0}'} \varphi(\eta_{\underline a}(T)^{3/2}, \underline\theta)
		= w(T, \eta_{\underline a}(T)^{3/2}, \underline\theta) \leq n(T,{\eta_{\underline a}(T)^{3/2}},\underline\theta).
\]
Since the left hand side does not depend on $T$ and {$V_{r,s}$ is positive due to the maximum principle}, this provides the desired uniform lower bound.  For any $x \leq \eta_{\underline a}(T)^{3/2}$, we may simply decrease $c_2$ appropriately and argue as above, thus finishing the proof of Proposition~\ref{prop:lowerboundacc}.
\end{proof}

\section{Proof of \Cref{prop:tw}: construction of travelling waves}\label{sec:tw}

We now construct a travelling wave solution for~\eqref{eq:main} when $m$ is not sub-linear.  We use these travelling waves later to obtain the spreading results of \Cref{thm:cauchy_finite}.  We recall that a function $n$ solving~\eqref{eq:main} is a \textit{travelling wave} solution with speed $c^* \in \R^+$ if it can be written $n(t,x,\theta)= \mu \left( \xi:= x - c^* t , \theta \right)$
where $\mu$ satisfies
\begin{equation}\label{eqkinwave}
\begin{cases}
- c^* \mu_{\xi} = \theta  \mu_{\xi \xi} + \mu_{\theta  \theta} + \mu  (1 - m - \nu),& \qquad \text{on } \R\times \Theta,\\
\mu_\theta (\cdot,\underline\theta) = 0.&
\end{cases}
\end{equation} 
where $\nu$ is the macroscopic density associated to $\mu$; that is $\nu = \int_\Theta \mu \left( \cdot, \theta \right) d \theta$.

We follow the standard strategy. First, we explain how to compute the minimal speed of propagation of possible solutions. Then, we solve an approximated problem in a bounded slab in both directions $\xi$ and $\theta$. The resolution of the slab problem is mainly a combination of \cite{AlfaroCovilleRaoul,BouinCalvez}. Finally, we let the slab tend to $\R \times \Theta$ to obtain a wave that we prove to have the minimal speed. 

We split the section into two parts.  First, we cover the case where $m(\theta)/\theta \to \infty$ as $\theta\to\infty$.  Second, we discuss the modifications necessary in the case where $m(\theta)/\theta$ tends to a positive constant.

\subsection{Case one: $\lim_{\theta \to + \infty} m(\theta)/\theta = +\infty$.}\label{sec:caseone}

\subsubsection{Spectral problems and minimal speeds}

We start the construction by defining speeds of propagation for the travelling waves. Recall that for any $\lambda >0$, the spectral problem defining $c_\lambda$ is written as follows
\begin{equation*}
		Q_\lambda'' + \left[  \lambda^2 \theta - \lambda c_\lambda + (1 -  m(\theta)) \right] Q_\lambda  = 0, \qquad \theta \in \Theta,
\end{equation*}
accompanied by the Neumann condition $Q_\lambda'(\underline \theta) = 0$.  Since $\lim_{\theta \to + \infty} m(\theta)/\theta = +\infty$, this spectral problem may be solved. We then define $\lambda^* = \min\argmin\{c_\lambda: \lambda > 0\}$ and let $c^* = c_{\lambda^*}$ and $Q^* = Q_{\lambda^*}$.  We fix the normalization that $\|Q_\lambda\|_\infty = 1$.

For $\tau \in [0,1]$, define the spatial diffusivity function $d^\tau$ by $d^\tau(\theta) = \underline \theta  + \tau \left( \theta - \underline \theta \right).$
In general, we suppress the dependence on $\tau$ in this section except in \Cref{sec:homotopy} where the $\tau$ variable is the main concern.  Since we first construct a travelling wave solution on a bounded slab, for any spatial decay rate $\lambda > 0$ and any $b>0$, we also introduce the following approximate spectral problems:
\begin{equation}\label{eq:spectral_problem}
\begin{cases}
 Q_{\lambda,b} ''+ \left( - \lambda c_{\lambda,b} + d \lambda^2   + \left( 1 - m \right) \right) Q_{\lambda,b} = 0, \qquad \text{ on } (\underline\theta, \underline\theta + b),\\
Q_{\lambda,b}' \left( \underline\theta \right) = 0,\qquad 
Q_{\lambda,b}  \left( \underline\theta + b\right) = 0, \qquad
Q_{\lambda,b} > 0.
\end{cases}
\end{equation}
%
By the Krein-Rutman theorem, there exists a positive solution $(c_{\lambda,b}, Q_{\lambda,b})$ to this problem. 
%
We now describe the function $\lambda \mapsto c_{\lambda,b}$.  By looking at the limit as $b$ tends to infinity, the principle eigenvalue of the operator $\partial_\theta^2 + (1-m)$ tends to $\gamma_\infty$.  Hence, when $b$ is sufficiently large, since $d^\tau \lambda^2 > 0$, we have $\lambda c_\lambda \geq \gamma_\infty/2 > 0$, so that $c_\lambda$ is uniformly bounded away from zero. Moreover, we  have that $\lambda c_\lambda \to \gamma_\infty$ as $\lambda$ tends to zero.  Using the Rayleigh quotient, it is easy to check that $\lambda c_\lambda = \mathcal{O}_{\lambda \to + \infty}\left(\lambda^2\right)$.  As a result, since $\lambda \mapsto c_\lambda$ is continuous, it admits a global minimum on $(0,\infty)$.  This minimum defines the minimal speed $c_{b}^{\tau*} := \min_{\lambda > 0} c_{\lambda,b}$. The smallest minimizer is denoted $\lambda_b^*$.

We also introduce the family of eigenvectors $Q_b^\delta$, that appears naturally as boundary conditions at the back for the slab problem below. For any $\delta \geq 0$ and $b\in (0,\infty]$, let us define $Q_b^\delta$ by solving the following eigenvalue problem  
\begin{equation}\label{eq:eigenpb}
\begin{cases} 
	\left( Q_b^\delta \right) '' + \left( 1 - (1-\delta) m \right)Q_b^\delta = \gamma_b^\delta Q_b^\delta, \qquad \text{ on }\theta\in (\underline\theta, \underline\theta + b),\\
	\left( Q_b^\delta \right)' \left( \underline\theta \right) = 0,
	\qquad Q_b^\delta \left(\underline\theta + b \right) = 0,
	\qquad \int_{(\underline\theta, \underline\theta + b)} Q_b^\delta(\theta)\, d\theta = \gamma_b^\delta,
	\qquad Q_b^\delta > 0.	
\end{cases}
\end{equation}
We denote by $Q_b$ the particular case of $Q_b^{\delta = 0}$. The positivity of the eigenvalue $\gamma_b^\delta$ is ensured when $b$ is sufficiently large as $\gamma_b^\delta \geq \gamma_b^0$. 
%

\subsubsection{The problem in a slab}\label{sec:slab}

Given $a,b, \delta, \underline\epsilon >0$ and $\tau \in [0,1]$, we define the problems 
on the slab $(-a,a) \times (\underline\theta, \underline\theta + b)$ as follows:
\begin{equation}\label{eq:slab}
\begin{cases}
		-c_{a,b}^\tau \big( \mu_{a,b}^\tau \big)_\xi  - d^\tau \big( \mu_{a,b}^\tau \big)_{\xi \xi} -  \big( \mu_{a,b}^\tau \big)_{\theta\theta} =\big[ \mu_{a,b}^\tau \big]_+ \big(1 - m - \nu_{a,b}^\tau \big),&\text{on } (-a,a) \times (\underline\theta, \underline\theta + b), \\
		\big( \mu_{a,b}^\tau \big)_{\theta}   (\cdot, \underline \theta ) = 0,
		\qquad \mu_{a,b}^\tau(\cdot, \underline \theta + b ) = 0,\\
		\mu_{a,b}^\tau(-a,\cdot) = Q_b  , \qquad \mu^a(a,\cdot)  = 0
		\qquad \mu_{a,b}^\tau(0,\underline\theta) = \underline\eps,
\end{cases}
\end{equation}
where $\nu_{a,b}^\tau := \int_{(\underline\theta, \underline\theta + b)} \mu_{a,b}^\tau(\cdot,\theta) d\theta$ and where we use the ``positive'' part notation $x_+ := x \1_{x\geq 0}$.  We are, momentarily, suppressing the dependence of $\mu$ on $\underline\epsilon$ and $\delta$.

We note that we seek {\em positive} travelling waves.  This is a consequence of the maximum principle. 
Indeed, suppose that $\mu_{a,b}^\tau$ attains a negative minimum at some point $(\xi_0,\theta_0) \in [-a,a] \times [\underline\theta, \underline\theta + b]$. Necessarily, $\xi_0 \neq \pm a$ due to the Dirichlet boundary conditions and $\theta_0 \neq \underline \theta, \underline\theta+b$ due to the Neumann and Dirichlet boundary conditions, respectively. Hence, $(\xi_0,\theta_0) \in (-a,a) \times (\underline\theta, \underline\theta + b)$, so, by continuity, we can find an open set $\mathcal{V} \subset (-a,a) \times (\underline\theta, \underline\theta + b)$ containing $(\xi_0,\theta_0)$ such that we  have,  
\begin{equation*}
-c_{a,b}^\tau \left( \mu_{a,b}^\tau \right)_\xi  - d^\tau \left( \mu_{a,b}^\tau \right)_{\xi \xi} -  \left( \mu_{a,b}^\tau \right)_{\theta\theta} = 0, \qquad \text{on } \mathcal{V}.
\end{equation*}
By the maximum principle, this would imply that $\mu$ is a negative constant, which is impossible.

We now comment on the problem \eqref{eq:slab}. The unknowns are the speed $c_{a,b}^\tau$ and the profile $\mu_{a,b}^\tau$. Without the supplementary renormalization condition $\mu_{a,b}^\tau(0,\underline\theta) = \underline\epsilon$, the problem is underdetermined. Indeed, this additional condition is needed to ensure compactness of the family $(c_{a,b}^\tau,\mu_{a,b}^\tau)$ when $a$ tends to $\infty$, since the limit problem is translationally invariant. The boundary condition in $-a$ is chosen this way since we heuristically expect that the distribution of the population converges towards $Q$ at the back of an invasion front.  Although we fix this boundary condition in the slab, let us recall again that in general the behavior at the back of the wave for the limit problem is not easy to determine due to possible Turing instabilities.   Lastly, we note that, due to the nonlocal term, the equation for $\mu$,~\eqref{eq:slab}, does not admit a comparison principle.



We follow the standard construction -- obtaining a priori bounds on the slab, using Leray-Schauder degree theory to obtain a solution on the slab, and then carefully taking the limit as the slab approximates $\R\times [\underline\theta,\infty)$.
For notational convenience, we  omit the subscript $a,b$ and the superscript $\tau$ in $\mu_{a,b}^\tau$ and $c_{a,b}^\tau$.

\subsubsection*{An upper bound on $c$}

We first obtain a general upper bound on $c$, which comes from the linearized problem.

\begin{lemma}\label{lem:upboundc}
For any $\underline\epsilon > 0$, there exists $a_0(\underline\epsilon,b)$ such that if $a \geq a_0(\underline\eps,b)$, then any solution $(c,\mu)$ of the slab problem \eqref{eq:slab} satisfies $c \leq c_b^*$.
\end{lemma}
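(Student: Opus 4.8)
The plan is to argue by contradiction: assume $c>c^*$ and show that the normalization $\mu(0,\underline\theta)=\underline\epsilon$ becomes impossible once $a$ is large. Recall that by the maximum principle argument already given, $\mu>0$ in the open slab, so $\nu>0$ and $[\mu]_+=\mu$; hence $\mu$ is a subsolution of the linearized slab operator $\mathcal M:=-c\,\partial_\xi-d^\tau\partial_{\xi\xi}-\partial_{\theta\theta}-(1-m)$, more precisely $\mathcal M\mu=-\mu\nu\le 0$. The strategy is to dominate $\mu$ by an explicit supersolution built from the full-space principal pair $(c^*,Q^*)=(c_{\lambda^*},Q_{\lambda^*})$, which exists here since $m(\theta)/\theta\to+\infty$.

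First I would set, for a constant $A>0$ to be fixed, $\psi(\xi,\theta):=A\,e^{-\lambda^*\xi}Q^*(\theta)$. Using that $Q^*$ solves $(Q^*)''+\big((\lambda^*)^2\theta-\lambda^*c^*+1-m\big)Q^*=0$ with $(Q^*)'(\underline\theta)=0$, a direct computation gives
\[
\mathcal M\psi=\big[\lambda^*(c-c^*)+(\lambda^*)^2(\theta-d^\tau(\theta))\big]\psi .
\]
Since $\theta-d^\tau(\theta)=(1-\tau)(\theta-\underline\theta)\ge 0$ and $c-c^*>0$, the bracket is positive, so $\psi$ is a strict supersolution; it is positive on the whole closed slab, it satisfies the Neumann condition at $\theta=\underline\theta$ (because $(Q^*)'(\underline\theta)=0$), and it dominates $\mu$ on $\{\xi=a\}\cup\{\theta=\underline\theta+b\}$, where $\mu$ vanishes. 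It remains to force $\psi\ge\mu=Q_b^\delta$ on $\{\xi=-a\}$: writing $q_b:=\min_{[\underline\theta,\underline\theta+b]}Q^*>0$, this holds as soon as $A\,e^{\lambda^*a}q_b\ge\|Q_b^\delta\|_\infty$, i.e.\ once $a\ge\frac{1}{\lambda^*}\log\!\big(\|Q_b^\delta\|_\infty/(A q_b)\big)$.

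Next I would deduce $\mu\le\psi$ on the slab by comparison. The \emph{only} delicate point is that $\mathcal M$ has an indefinite zeroth-order coefficient $-(1-m)$, so the bare maximum principle need not apply; this is handled by the ground-state substitution $v:=\mu/\psi$, which is $C^2$ up to the boundary since $\psi>0$ on the closed slab. A short computation shows $v$ solves a uniformly elliptic equation with no zeroth-order term and right-hand side $-v\nu-v\big[\lambda^*(c-c^*)+(\lambda^*)^2(\theta-d^\tau)\big]\le 0$, and that $v_\theta(\cdot,\underline\theta)=0$ (both $\mu$ and $\psi$ satisfy Neumann there). By the maximum principle together with the Hopf lemma at $\{\theta=\underline\theta\}$, $v$ attains its maximum on $\{\xi=\pm a\}\cup\{\theta=\underline\theta+b\}$, where $v=0$ except on $\{\xi=-a\}$, on which $v\le\|Q_b^\delta\|_\infty/(A e^{\lambda^*a}q_b)\le 1$ once $a$ is large. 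Hence $v\le 1$, i.e.\ $\mu\le\psi$ throughout the slab.

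Finally I would choose $A:=\underline\epsilon/(2Q^*(\underline\theta))$ and evaluate $\mu\le\psi$ at $(0,\underline\theta)$, getting $\underline\epsilon=\mu(0,\underline\theta)\le\psi(0,\underline\theta)=A\,Q^*(\underline\theta)=\underline\epsilon/2$, a contradiction; thus $c\le c^*$, with threshold $a_0(\underline\epsilon,b):=\frac{1}{\lambda^*}\log\!\big(2Q^*(\underline\theta)\|Q_b^\delta\|_\infty/(\underline\epsilon\,q_b)\big)$, depending only on $\underline\epsilon$ and $b$. The substantive point—the ``obstacle''—is really this simultaneous calibration of $\psi$: it must exceed the back datum $Q_b^\delta$ at $\xi=-a$ yet stay below $\underline\epsilon$ at $(0,\underline\theta)$, which is exactly what the exponential $e^{-\lambda^*\xi}$ allows, being enormous at $\xi=-a$ and $O(1)$ at $\xi=0$ with $a$ free to tend to infinity; the comparison subtlety from the sign-changing zeroth-order term is dispatched routinely by the ground-state transform.
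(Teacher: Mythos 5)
Your proof is correct and is essentially the paper's argument: assuming $c>c^*$, both proofs use the same supersolution family $\psi_A(\xi,\theta)=A e^{-\lambda^*\xi}Q^*(\theta)$ (valid since $\theta\geq d^\tau(\theta)$ and $c>c^*$) and reach a contradiction with the normalization $\mu(0,\underline\theta)=\underline\epsilon$ once $a$ is large, the point being that $\psi_A$ only needs to dominate the left boundary datum $Q_b^\delta$ at $\xi=-a$, which the factor $e^{-\lambda^*\xi}$ makes compatible with $\psi_A(0,\underline\theta)<\underline\epsilon$. The only difference is the mechanics of the comparison step: the paper slides $A$ down to the critical value $A_0$ and rules out interior and boundary touching points (the sign-indefinite zeroth-order term is harmless exactly at a touching point, and Hopf handles the Neumann edge), whereas you fix $A=\underline\epsilon/(2Q^*(\underline\theta))$ and prove $\mu\le\psi$ directly via the ground-state substitution $v=\mu/\psi$, which eliminates the zeroth-order term; both implementations are sound and yield the same threshold mechanism $a_0(\underline\epsilon,b)\sim\lambda^{*-1}\log\bigl(\|Q_b^\delta\|_\infty/\underline\epsilon\bigr)$.
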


\begin{proof}

We follow the classical approach: we find a super-solution for a related problem. Since $\mu \geq 0$, $\mu$ is a sub-solution to the linearized problem.  In other words,
\begin{equation}\label{eq:n}
-c \mu_{\xi}  \leq d \mu_{\xi\xi} + \mu_{\theta\theta}  + \left( 1 - m \right)\mu, \qquad \text{on } (-a,a) \times (\underline\theta, \underline\theta + b).
\end{equation}

Let us assume by contradiction that $c > c_b^*$, then the family of functions $\psi_A ( \xi, \theta ):= A e^{- \lambda_b^*\xi} Q_b^* ( \theta )$ is indeed a family of super-solutions to the linear problem:
\begin{align}\label{eq:psi_A}
- c \left(\psi_A\right)_{\xi} = c \lambda_b^*\psi_A > \lambda_b^* c_b^* \psi_A &= \theta  \left(\psi_A\right)_{\xi\xi}  + \left(\psi_A\right)_{\theta\theta}  + (1-m )\psi_A \\& \geq d  \left(\psi_A\right)_{\xi\xi}  + \left(\psi_A\right)_{\theta\theta}  + (1-m )\psi_A, \qquad \text{on } (-a,a) \times (\underline\theta, \underline\theta + b) \nonumber.
\end{align}
Since $Q_b^*$ is positive and $\mu$ is bounded and since both functions are $C^2$, we have $\mu \leq \psi_A$ for $A$ sufficiently large.  Hence, define
\begin{equation*}
A_0 = \inf \left\lbrace A \; \vert \; \psi_A \geq \mu \text{ on } [-a,a]\times[\underline\theta, \underline\theta+ b] \right\rbrace. 
\end{equation*}
Necessarily, $A_0 > 0$ and there exists a point $(\xi_0, \theta_0)$ in $[-a,a]\times[\underline\theta, \underline\theta+ b]$ where $\psi_{A_0}$ touches $\mu$: $\mu(\xi_0 , \theta_0) = \psi_{A_0}(\xi_0 , \theta_0).$  This point minimizes $\psi_{A_0} - \mu $ but {\em cannot} be in $(-a,a) \times (\underline\theta, \underline\theta + b)$ if the normalization $\underline\eps$ is well chosen.  Indeed, combining~\eqref{eq:n} and~\eqref{eq:psi_A}, we have
\begin{equation*}
- c \left(\psi_{A_0} - \mu \right)_{\xi} - d \left( \psi_{A_0} - \mu \right)_{\xi\xi} -  \left( \psi_{A_0} - \mu \right)_{\theta\theta} - (1-m) \left( \psi_{A_0} - \mu \right) > 0, \text{ on } (-a,a) \times (\underline\theta, \underline\theta + b).
\end{equation*}
But, if $(\xi_0,\theta_0)$ is in the interior, this inequality cannot hold since at $(\xi_0,\theta_0)$ we  have
\begin{equation*}
	\psi_{A_0} - \mu = 0,
	\quad  \left(\psi_{A_0} - \mu \right)_\xi = 0,
	\quad \text{ and }
	\quad d  \left( \psi_{A_0} - \mu \right)_{\xi\xi} +\left( \psi_{A_0} - \mu \right)_{\theta\theta} \geq 0.
\end{equation*}
Next we rule out the boundaries. First, $\xi_0 \neq a$ since $\psi_{A_0}(a,\theta_0) > 0$.  
 Moreover, $(\xi_0,\theta_0)$ cannot lie where both $\psi_{A_0}$ and $\mu$ satisfy Neumann boundary conditions thanks to the Hopf maximum principle.  Next, we exclude the left boundary $\{\xi=-a\}$ due to the normalization. Indeed, if $\xi_0 = -a$, then $\psi_{A_0} (\xi_0, \theta_0) = Q_b(\theta_0)$ and thus $A_0 = e^{- \lambda_b^* a} Q_b(\theta_0)/Q_b^*(\theta_0)$. Using the definition of $A_0$, we have 
\begin{equation*}
	\underline\epsilon = \mu(0,\underline\theta) < \frac{Q_b(\theta_0)}{Q_b^*(\theta_0)}e^{- \lambda_b^* a} Q_b^* (\underline \theta ).
\end{equation*}
We thus define $a_0(\underline\epsilon,b)$ to be sufficiently large that this inequality cannot hold when $ a > a_0(\underline\epsilon,b)$.

From above, it follows that $\theta_0 = b$ and $\xi_0 \neq \pm a$.  By our choice of $A_0$, we may assume without loss of generality that $(\psi_{A_0} - \mu)_\xi(\xi_0,b) = 0$ since otherwise we could lower $A_0$ further, contradicting its definition.  However, the Hopf maximum principle implies that $(\psi_{A_0} - \mu)_{\xi}(\xi_0,b) < 0$.  This is a contradiction, finishing the proof.

\end{proof}

\subsubsection*{A uniform bound on the trait tails, for $c \in [0,c^* +1]$.}

Since the trait space is unbounded, we must prove the following lemma about the tails of $n$ in $\theta$.

\begin{lemma}[The tails of $\mu$]\label{lem:tails}
Assume $c \in \left[ 0 , c^* + 1 \right]$, $\tau \in [0,1]$, $\delta > 0$, and recall that $Q_\infty^\delta$ is defined in \eqref{eq:eigenpb}. Then there exists a constant $C_{\text{tail}}$ depending on $\delta$ such that on $[-a,a]\times[\underline\theta, \underline\theta+ b]$ we have
\begin{equation*}
	\mu
		\leq C_{tail} \left(1 + \|\mu\|_{L^\infty([-a,a]\times[\underline\theta,\underline\theta+b])}\right) Q_\infty^\delta.
\end{equation*}
\end{lemma}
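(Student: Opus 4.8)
The plan is to construct a super-solution of the form $C_{\text{tail}}(1+\|\mu\|_\infty) Q_\infty^\delta(\theta)$ for the equation satisfied by $\mu$ on the slab, and then to conclude by the maximum principle (or more precisely, by a sliding argument as in \Cref{lem:upboundc}, since the non-local term prevents a direct comparison). First I would recall that $\mu \geq 0$ solves
\[
	-c\mu_\xi - d\mu_{\xi\xi} - \mu_{\theta\theta} = \mu_+(1-m-\nu) \leq \mu(1-m),
\]
so that $\mu$ is a sub-solution to the linearized slab operator. The key is that $Q_\infty^\delta$, being an eigenfunction of $\partial_\theta^2 + (1-(1-\delta)m)$ on $\Theta$ with eigenvalue $\gamma_\infty^\delta$, satisfies
\[
	(Q_\infty^\delta)'' + (1-m)Q_\infty^\delta = \left(\gamma_\infty^\delta - \delta m \right) Q_\infty^\delta,
\]
and since $m \to +\infty$ and $\delta > 0$, the term $\gamma_\infty^\delta - \delta m(\theta)$ is negative for $\theta$ large, say for $\theta \geq \theta_1(\delta)$. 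Thus $Q_\infty^\delta$ is a strict super-solution of the $\theta$-operator $\partial_\theta^2 + (1-m)$ for $\theta \geq \theta_1$, and since it has no $\xi$-dependence it is trivially a super-solution of the full linearized operator there.

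The next step is to handle the region $\theta \in [\underline\theta, \theta_1]$, where $Q_\infty^\delta$ need not be a super-solution. Here I would use that on this compact $\theta$-range everything is uniformly bounded: $Q_\infty^\delta$ is bounded below by a positive constant $q_1 = \min_{[\underline\theta,\theta_1]} Q_\infty^\delta > 0$, and $\mu \leq \|\mu\|_\infty$. Hence choosing $C_{\text{tail}} \geq q_1^{-1}$ already gives $\mu \leq C_{\text{tail}}(1+\|\mu\|_\infty)Q_\infty^\delta$ on $[\underline\theta,\theta_1]$ — in particular on the horizontal slice $\{\theta = \theta_1\}$, which will serve as a boundary for the comparison argument in the remaining region $\Gamma_{a,b} \cap \{\theta \geq \theta_1\}$. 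On that region, set $\Psi = C_{\text{tail}}(1+\|\mu\|_\infty)Q_\infty^\delta$; it is a super-solution of the linearized operator, and on the parabolic-type boundary of $\{\theta\geq\theta_1\}$ we have $\Psi \geq \mu$: at $\theta = \theta_1$ by the previous step, at $\theta = \underline\theta + b$ because $\mu = 0$ there, and we must also address the lateral edges $\xi = \pm a$. At $\xi = a$, $\mu = 0 \leq \Psi$; at $\xi = -a$, $\mu = Q_b^\delta$, so we need $C_{\text{tail}}(1+\|\mu\|_\infty)Q_\infty^\delta \geq Q_b^\delta$ on $(\theta_1, \underline\theta+b)$, which holds once $C_{\text{tail}}$ is large enough, using that $Q_b^\delta \to Q_\infty^\delta$ as $b\to\infty$ in a way controlled independent of $a$ (and $\|Q_b^\delta\|_\infty$ is uniformly bounded). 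A standard sliding argument — letting $A_0 = \inf\{A : A\Psi > \mu\}$ and ruling out interior contact points via the differential inequality, Neumann edges via Hopf, and the remaining boundary pieces as just discussed — then forces $\mu \leq \Psi$ on all of $\Gamma_{a,b}\cap\{\theta\geq\theta_1\}$, completing the proof.

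I expect the main obstacle to be the bookkeeping at the boundaries in the sliding argument — specifically, verifying that the constant $C_{\text{tail}}$ can be chosen depending only on $\delta$ (and not on $a$, $b$, $c$, $\tau$) so that $C_{\text{tail}}(1+\|\mu\|_\infty)Q_\infty^\delta$ dominates $Q_b^\delta$ on the left edge uniformly, which requires a uniform-in-$b$ comparison between $Q_b^\delta$ and $Q_\infty^\delta$ near and beyond $\theta_1$; this should follow from elliptic estimates and the convergence $Q_b^\delta \to Q_\infty^\delta$, but it is the step where one must be careful that no hidden dependence on the large parameters creeps in. A secondary point is that, because of the non-locality in $\nu$, one cannot invoke a plain comparison principle; however, since $\nu \geq 0$ makes $\mu$ a genuine sub-solution of the \emph{linear} operator, the sliding argument against the $\xi$-independent super-solution $\Psi$ goes through exactly as in the proof of \Cref{lem:upboundc}, and the non-locality causes no real difficulty here.
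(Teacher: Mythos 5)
Your proposal is correct and follows essentially the same route as the paper's proof: the super-solution is a constant multiple of $Q_\infty^\delta$, the threshold is $\theta_\delta = m^{-1}(\delta^{-1}\gamma_\infty^\delta)$ so that $\delta m - \gamma_\infty^\delta \geq 0$ beyond it, the small-$\theta$ region is handled through $\|\mu\|_\infty/\min_{[\underline\theta,\theta_\delta]}Q_\infty^\delta$, and the left edge $\{\xi=-a\}$ through a uniform-in-$b$ bound $Q_b^\delta \leq C\,Q_\infty^\delta$ (which the paper justifies via the reasoning behind \eqref{eq:Q_asymptotics}, i.e.\ a ratio bound rather than the sup-norm bound you mention, since $Q_\infty^\delta$ decays). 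Your explicit sliding argument in place of the paper's implicit comparison is fine, and indeed appropriate since the zeroth-order coefficient $1-m$ need not have a sign.
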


\begin{remark}
We point out that it is in this proof that we see the importance of modifying the trade-off function with the parameter $\delta$.  Without this, a uniform bound in $b$ would be impossible.
\end{remark}

\begin{proof}
Our strategy is to find a relevant super-solution using spectral problems.
To do this, define, for $\theta_\delta$ to be chosen later, the function 
\begin{equation*}
	\psi  := \max \left( 1 , \frac{\Vert \mu \Vert_{L^{\infty} ( [-a,a] \times [\underline\theta, \underline\theta + b] )}}{\min_{[\underline\theta,\theta_\delta]} Q_\infty^\delta}, \left\|\frac{Q_b}{Q_\infty^\delta}\right\|_{L^\infty} \right) \, Q_\infty^\delta,
	\quad\text{ on }  (-a,a) \times (\underline\theta, \underline\theta + b).
\end{equation*}
 We have $\mu \leq \psi$ on $[-a,a] \times [\underline\theta,\theta_\delta]$ by construction. It satisfies
\begin{equation*}
- c \psi_{\xi}  - d \psi_{\xi\xi} - \psi_{\theta\theta} - \left( 1 - m \right)\psi  = \big( - \gamma_\infty^\delta + \delta m \big) \psi, \quad \text{ on } (-a,a) \times (\underline\theta, \underline\theta + b).
\end{equation*}
Define $\theta_\delta := m^{-1}\left(\delta^{-1}  \gamma_\infty^\delta \right)$. This definition is possible by \Cref{hyp:m}. 
The function $\psi$ is then a super-solution of the linearized problem on $(-a,a) \times (\theta_\delta,\underline\theta+b)$. 
On $(-a,a) \times \left\lbrace \underline\theta + b \right\rbrace$ and $\left\lbrace a \right\rbrace \times (\theta_\delta,\underline\theta+b)$, we see that $\mu \leq \psi$ since $\mu$ satisfies Dirichlet boundary conditions and $\psi$ is positive there. The only boundary that remains to be checked is $\left\lbrace -a \right\rbrace \times (\theta_\delta,\underline\theta+b)$. There, it is true by construction.  

We point out that, using similar reasoning, there exists $\theta_\delta'$, independent of $b$, such that $Q_b$ is a sub-solution of $Q_\infty^\delta$ on $[\theta_\delta',b]$. Further, using elliptic regularity, $Q_b$ is uniformly bounded above independent of $b$, and, by the maximum principle $Q_\infty^\delta$ is uniformly bounded below on $[\underline\theta, \theta_\delta']$.  Taken together these  facts imply that we may bound $\|Q_b/Q_\infty^\delta\|_\infty$ by a constant that is independent of $b$.

\end{proof}

\subsubsection*{A uniform bound over the steady states, for $c \in \left[ 0 , c^* + 1 \right]$.}

We now come to the crucial part of the procedure, that is, deriving uniform bounds on steady states for \eqref{eq:slab}.  We assume \textit{a priori} that the speed is nonnegative, since we prove later that the problem \eqref{eq:slab} does not admit any solution with $c=0$. We extend a similar argument used in \cite{BouinCalvez} 

\begin{lemma}[A priori estimates for bounded $c$]\label{lem:nc}
Assume $c \in \left[ 0 , c^* + 1 \right]$, $\tau \in [0,1]$, $a\geq 1$ and $b$ is sufficiently large.
 Then there exists a constant $C_0$, depending only on $\underline\theta$ and $m$, such that any solution $(c,\mu)$ of \eqref{eq:slab} satisfies
\begin{equation*}
\Vert \mu \Vert_{L^\infty\left( [-a,a]\times[\underline\theta,\underline\theta+b] \right) }\leq C_0\,.
\end{equation*}
\end{lemma}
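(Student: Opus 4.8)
The plan is to argue by contradiction with a blow‑up (rescaling) argument, extending the a priori estimate from \cite{BouinCalvez}; \Cref{lem:tails} is what confines the maximum of $\mu$ to a bounded range of traits, and the natural cane‑toads parabolic scaling is what makes the blow‑up close. Suppose the bound failed: there would be a sequence of parameters $a_j\geq1$, $b_j$ large, $\tau_j\in[0,1]$, $c_j\in[0,c^*+1]$, with $\delta$ fixed, and solutions $\mu_j$ of \eqref{eq:slab} with $K_j:=\|\mu_j\|_{L^\infty}\to\infty$. Since $\mu_j$ is continuous on the compact closed slab, vanishes on the Dirichlet parts of the boundary, and equals $Q_b^\delta$ on $\{\xi=-a_j\}$ (which is bounded by a constant independent of $b$, by the asymptotics used for \eqref{eq:Q_asymptotics}), for $j$ large the maximum $K_j$ is attained at an interior point $(\xi_j,\theta_j)$ — allowing $\theta_j=\underline\theta$, which is handled by even reflection across the Neumann boundary, and with $\theta_j$ bounded away from $\underline\theta+b_j$ since $\mu_j=0$ there. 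Two facts then follow: (i) by \Cref{lem:tails}, $K_j\leq C_{tail}(1+K_j)Q_\infty^\delta(\theta_j)$, so $Q_\infty^\delta(\theta_j)\geq(3C_{tail})^{-1}$ for $j$ large, which forces $\theta_j\leq\Theta_\delta$ for a fixed $\Theta_\delta$; (ii) the second‑derivative test at the interior maximum applied to \eqref{eq:slab} gives $0\leq K_j\bigl(1-m(\theta_j)-\nu_j(\xi_j)\bigr)$, hence $\nu_j(\xi_j)\leq1$.

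\textbf{Rescaling.} I would then rescale at the maximum with the cane‑toads scaling: with $\lambda_j:=K_j^{-1/3}$ set $v_j(y,\vartheta):=K_j^{-1}\mu_j(\xi_j+\lambda_j y,\theta_j+\lambda_j\vartheta)$, so $0\leq v_j\leq1$ and $v_j(0,0)=1$, and a direct computation shows that $v_j$ solves, on a domain that contains any fixed ball $B_{R_0}(0)$ once $j$ is large (this uses the boundary data above together with sub‑solution estimates to keep $(\xi_j,\theta_j)$ a bounded rescaled distance from any Dirichlet boundary),
\[
	-c_j\lambda_j (v_j)_y - d^{\tau_j}(\theta_j+\lambda_j\vartheta)(v_j)_{yy} - (v_j)_{\vartheta\vartheta} + \bar\nu_j(y)\,v_j = \lambda_j^2\bigl(1-m(\theta_j+\lambda_j\vartheta)\bigr)v_j ,
\]
where $\bar\nu_j(y):=\int v_j(y,\vartheta)\,d\vartheta = K_j^{-2/3}\nu_j(\xi_j+\lambda_j y)\geq0$. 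Since $\theta_j+\lambda_j\vartheta$ stays bounded on $B_{R_0}(0)$ and $d^\tau(\theta)\in[\underline\theta,\theta]$, the principal part is uniformly elliptic with bounded coefficients, the drift $c_j\lambda_j\to0$, and the right‑hand side is $O(\lambda_j^2)\to0$. The decisive structural point is that $\bar\nu_j\geq0$: although it is typically unbounded (it scales like $K_j^{1/3}$), its sign is the one for which the one‑sided De Giorgi/Moser estimates are insensitive to its size.

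\textbf{Conclusion and the main obstacle.} Dropping the good‑sign term $\bar\nu_jv_j$, $v_j$ is a bounded sub‑solution of $-c_j\lambda_j(v_j)_y-d^{\tau_j}(v_j)_{yy}-(v_j)_{\vartheta\vartheta}\leq\lambda_j^2v_j$ on $B_{R_0}(0)$, so the local maximum estimate yields $1=v_j(0,0)\leq\sup_{B_{R_0/2}}v_j\leq C\bigl(\|v_j\|_{L^2(B_{R_0})}+\lambda_j^2\bigr)$, whence $\int_{B_{R_0}}v_j\geq\int_{B_{R_0}}v_j^2\geq c_0>0$; by Fubini there is $y_j^*\in[-R_0,R_0]$ with $\bar\nu_j(y_j^*)\geq c_0/(2R_0)$, i.e. $\nu_j(\xi_j+\lambda_j y_j^*)\geq\frac{c_0}{2R_0}K_j^{2/3}\to\infty$. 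This contradicts any a priori bound $\|\nu_j\|_{L^\infty}\leq\overline\nu$ uniform in $j$. For the constant‑diffusivity case $\tau=0$ such a bound is elementary: integrating \eqref{eq:slab} in $\theta$, using the Neumann condition, $(\mu)_\theta(\cdot,\underline\theta+b)\leq0$, and $\int\mu(1-m)\,d\theta\leq\int_{\{m\leq1\}}\mu\,d\theta\leq\nu$, shows $\nu$ is a sub‑solution of the logistic equation $-c\nu_\xi-\underline\theta\nu_{\xi\xi}\leq\nu(1-\nu)$ on $(-a,a)$ with boundary values $\gamma_b^\delta\leq1$ and $0$, so $\nu\leq1$ by comparison with the constant super‑solution; in that case one need not even rescale, since the same local maximum estimate at $(\xi_j,\theta_j)$ gives $K_j\leq C\bigl(K_j\int_{B_1(\xi_j,\theta_j)}\mu_j\bigr)^{1/2}\leq C(2K_j)^{1/2}$. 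For general $\tau\in[0,1]$ the integrated equation is not closed in $\nu$ (the moment order is raised by $d^\tau$), so one carries the blow‑up through, propagating the growth just obtained against \Cref{lem:tails} exactly as in \cite{BouinCalvez}. I expect the genuinely hard part to be precisely this interaction with the nonlocal term: \eqref{eq:slab} has no comparison principle, after rescaling the nonlocal coefficient $\bar\nu_j$ is unbounded, and one must exploit its sign through sub‑solution/weak‑Harnack estimates only, while carefully controlling the geometry of the rescaled domain near both the Dirichlet and the Neumann boundaries.
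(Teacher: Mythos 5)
Your setup is sound and matches the paper's (confining the maximum to bounded traits via \Cref{lem:tails}, the second-derivative test giving $\nu_j(\xi_j)\le 1$, reflection at the Neumann boundary), but the concluding step has a genuine gap for $\tau>0$, which is the case the homotopy argument actually needs (in particular $\tau=1$). Your blow-up produces a point at distance $O(K_j^{-1/3})$ from $\xi_j$ where $\nu_j\gtrsim K_j^{2/3}$, and you declare this to contradict ``any a priori bound $\|\nu_j\|_{L^\infty}\le\overline\nu$''. No such bound is available: the only global control is \Cref{lem:tails}, which gives $\nu_j\le C(1+K_j)$, and $K_j^{2/3}\ll K_j$, so there is no contradiction; the pointwise information $\nu_j(\xi_j)\le 1$ does not help either, since the large value of $\nu_j$ sits at a different point and you have no modulus of continuity for $\nu_j$ that is uniform in $K_j$ (the natural $C^{1,\alpha}$ bounds on $\mu_j$ grow like $K_j^2$). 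Your fallback, ``carry the blow-up through against \Cref{lem:tails} exactly as in \cite{BouinCalvez}'', is exactly the missing argument, not a reference to one: your closed logistic inequality for $\nu$ only works when $d^\tau$ is constant ($\tau=0$), as you note, and the rescaled nonlocal coefficient $\bar\nu_j$ being merely nonnegative is not enough to close the loop. Proving the lemma only for $\tau=0$ is insufficient, since the Leray--Schauder argument requires the a priori bound along the whole homotopy.

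The paper closes this loop differently, and the difference is instructive: it never tries to control $\nu$ in a neighbourhood. It works only on the slice $\{\xi=\xi_j\}$, where the maximum principle gives $\|\mu(\xi_j,\cdot)\|_{L^1_\theta}\le\nu(\xi_j)\le 1$, obtains from local $W^2_p$ estimates and Sobolev embedding a bound $[\mu]_{C^{1+\alpha}}\le C(1+M_{a,b}+M_{a,b}^2)$ near the maximum (the $M_{a,b}^2$ coming from bounding $\nu\le C(1+M_{a,b})$ via \Cref{lem:tails}), and then applies the one-dimensional Gagliardo--Nirenberg inequality in $\theta$ to the function $\theta\mapsto\mu(\xi_j,\theta)$, yielding $M_{a,b}\le C\big(M_{a,b}+M_{a,b}^2\big)^{1/(2+\alpha)}$, which is self-improving because $2/(2+\alpha)<1$. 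The essential idea you are missing is thus an interpolation that converts the single-point information $\nu(\xi_j)\le 1$ into an $L^\infty$ bound along that slice, rather than attempting to propagate size information on $\nu$ to nearby values of $\xi$, which the nonlocal structure does not permit. If you want to keep a blow-up flavour, you would still need this slice interpolation (or an equivalent) to reach a contradiction.
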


\begin{proof}
We define $M_{a,b} = \|\mu\|_{L^\infty([-a,a]\times[\underline\theta, \underline\theta + b])}$ and want to prove that $M_{a,b}$ is in fact bounded uniformly in $a$ and $b$.  We can assume that $M_{a,b} \geq 1$ because, if not, we are finished.  As $Q_\infty^\delta$ tends to zero as $\theta$ tends to infinity, choose $\overline \theta$ such that, if $\theta \geq \overline\theta$, $Q_\infty^\delta(\theta) < 1/(4C_{tail})$, where $C_{tail}$ is as in \Cref{lem:tails}.  As a consequence of this, along with the bound $M_{a,b} \geq 1$, we have, for all $\theta \geq \overline \theta$,
\[
	\mu(\cdot, \theta) \leq C_{tail}(1 + M_{a,b}) Q_\infty^\delta(\theta)
		\leq 2C_{tail} M_{a,b} Q_\infty^\delta(\theta)
		\leq M_{a,b}/2.
\]
Hence, the maximum of $\mu$ can only be attained by points $(x_{\max},\theta_{\max})$ with $\theta_{\max} < \overline \theta$.  Then we may interpret ``sufficiently large'' in the assumptions to mean $b \geq \overline \theta + 1$.  In addition, we assume that $\theta_{\max} + 1 > \underline\theta$ in order to avoid complications due to the boundary; however, such complications may be easily dealt with by a simple reflection procedure outlined in~\cite{BerestyckiMouhotRaoul,Turanova}.  Finally, we assume that $x_{\max} \neq \pm a$, since the maxima here are controlled independent of $a$ and $b$. 

Fix any $p \in (1,\infty)$, and we use local elliptic regularity results to obtain a H\"older bound on $\mu$.  Indeed, by, e.g.~\cite[Theorem~9.13]{GilbargTrudinger},
\begin{align*}
	\|\mu&\|_{W^{2}_p\left( B_1(x_{\max},\theta_{\max}) \right)}\\
		&\leq C_p\left( \|\mu\|_{L^p\left( B_2(x_{\max},\theta_{\max}) \right)} + \|\mu(1 - m -\nu)\|_{L^p\left( B_2(x_{\max},\theta_{\max}) \right)} + \|Q_b\|_{W^2_p\left( B_2(x_{\max},\theta_{\max}) \right)}\right)\\
		&\leq C_p\left(\|\mu\|_{L^p\left( B_2(x_{\max},\theta_{\max}) \right)} + \|\mu m\|_{L^p\left( B_2(x_{\max},\theta_{\max}) \right)} + \|\mu \nu\|_{L^p\left( B_2(x_{\max},\theta_{\max}) \right)} + 1\right)\\
		&\leq C_p\left( \|\mu\|_{L^\infty\left( B_2(x_{\max},\theta_{\max}) \right)} + \|\mu m\|_{L^\infty\left( B_2(x_{\max},\theta_{\max}) \right)} + \|\mu \nu\|_{L^\infty\left( B_2(x_{\max},\theta_{\max}) \right)} + 1\right).
\end{align*}
Above, the constant $C_p$ changes line-by-line but is uniform in $a$ and $b$.  Indeed, the operator in~\eqref{eq:slab} is bounded and coercive independently of $b$ on $B_2(x_{\max},\theta_{\max})$ since $\theta_{\max} \leq \overline \theta$.  Notice that
\[
	\|m\|_{L^\infty(B_2(x_{\max},\theta_{\max}))} = \|m\|_{L^\infty(\theta_{\max}-2,\theta_{\max}+2)} \leq \|m\|_{L^\infty(\underline \theta, \overline \theta + 2)} \leq C,
\]
since $\theta_{\max} \leq \overline \theta$.  In addition, \Cref{lem:tails} implies that $\|\nu\|_{L^\infty(-a,a)}\leq C(1 + M_{a,b})$.  These two bounds, along with the elliptic regularity bound above, imply that
\[
	\|\mu\|_{W^2_p(B_1(x_{\max}, \theta_{\max}))}
		\leq C(1 + M_{a,b} + M_{a,b}^2)
		\leq 3C M_{a,b}^2.
\]
where $C$ is a constant independent of $a$ and $b$ that will change line-by-line in the sequel.
Choosing $p$ large enough, we obtain via Sobolev embedding that for any fixed $\alpha \in (0,1)$,
\begin{equation}\label{eq:lenya}
	[\mu]_{C^{1+\alpha}(B_1(x_{\max}, \theta_{\max}))} \leq C\|\mu\|_{W^2_p(B_1(x_{\max}, \theta_{\max}))}
		\leq C M_{a,b}^2.
\end{equation}
Applying the Gagliardo-Nirenberg interpolation inequality for the function $\theta \mapsto \mu \left( x_{\max}, \theta \right)$ yields
\begin{equation*}
	\| \mu \left( x_{\max}, \cdot \right) \|_{L^\infty_\theta\left((\theta_{\max}-1,\theta_{\max}+1)\right)}
		\leq C \| \mu \left( x_{\max}, \cdot \right) \|_{L^1_\theta\left((\theta_{\max}-1,\theta_{\max}+1)\right)}^{\frac{1+\alpha}{2+\alpha}}
			\left[ \mu \left( x_{\max}, \cdot \right) \right]_{C^{1+\alpha}_\theta\left((\theta_{\max}-1,\theta_{\max}+1)\right)}^{\frac{1}{2+\alpha}}.
\end{equation*}
Using~\eqref{eq:lenya}, $\| \mu \left(x_{\max}, \cdot \right) \|_{L^\infty_\theta\left((\theta_{\max}-1,\theta_{\max}+1)\right)} = M_{a,b}$, and $\| \mu \left( x_{\max}, \cdot \right) \|_{L^1_\theta\left((\theta_{\max}-1,\theta_{\max}+1\right)} \leq \nu(x_{\max})$, we obtain that
\begin{equation}\label{eq:lenya1}
	M_{a,b}
		\leq C \nu(x_{\max})^{\frac{1+\alpha}{2+\alpha}}
			M_{a,b}^{\frac{2}{2+\alpha}}.
\end{equation}
Since $(x_{\max}, \theta_{\max})$ is the location of a maximum, then at this point we have that
\begin{equation}\label{eq:maximum_principal}
	0 \leq -c\mu_x - d^\tau \mu_{xx} - \mu_{\theta\theta}
		= \mu(1 - m - \nu).
\end{equation}
Thus, $\nu(x_{\max}) \leq 1$.  Using this along with~\eqref{eq:lenya1}, we obtain $M_{a,b} \leq C M_{a,b}^{\frac{2}{2+\alpha}}$.
This gives a bound on $M_{a,b}$ since $2/(2+\alpha) < 1$.
\end{proof}

\subsubsection*{Non-existence of solutions of the slab problem when $c=0$.}


\begin{lemma}[Lower bound for $\mu(0,\underline\theta)$ when $c=0$]\label{lem:bottom}
There exists $\underline\epsilon_0 > 0$ such that if $a$ and $b$ are large enough and $\tau\in[0,1]$, then any nonnegative solution of the slab problem $(0,\mu)$ satisfies $\mu(0,\underline\theta) > \underline\epsilon_0$. 
\end{lemma}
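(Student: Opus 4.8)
The plan is to show that, when $c=0$, the positivity of the growth rate prevents $\mu$ from being small near $\xi=0$, uniformly in $a$, $b$ and $\tau$. Recall first that any solution of \eqref{eq:slab} is automatically non-negative by the maximum-principle argument given just after \eqref{eq:slab}; with $c=0$ it then solves $-d^\tau\mu_{\xi\xi}-\mu_{\theta\theta}=\mu(1-m-\nu)$ on the slab, and $\mu>0$ in the interior by the strong maximum principle (since $\mu(0,\underline\theta)>0$). By \Cref{lem:nc} we have $\|\mu\|_{\infty}\le C_0$, hence by \Cref{lem:tails} $\mu\le C_{tail}(1+C_0)Q_\infty^\delta$ on the whole slab, and therefore $\nu\le\bar\nu:=C_{tail}(1+C_0)\|Q_\infty^\delta\|_{L^1}<\infty$, the integrability of $Q_\infty^\delta$ coming from its WKB decay $\exp(-\int\sqrt{(1-\delta)m})$, exactly as for \eqref{eq:Q_asymptotics}.

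Next I fix a favourable box, depending only on $m$. Let $\beta_s$ be the principal eigenvalue of $-\partial_{\theta\theta}+(m-1)$ on $(\underline\theta,\underline\theta+s)$ with a Neumann condition at $\underline\theta$ and a Dirichlet condition at $\underline\theta+s$; since $\beta_s\to-\gamma_\infty$ as $s\to\infty$, fix $s$ so large that $\beta_s<-\tfrac34\gamma_\infty$ and simultaneously $\int_{\underline\theta+s}^{\infty}C_{tail}(1+C_0)Q_\infty^\delta(\theta)\,d\theta<\tfrac1{100}\gamma_\infty$. On $\mathcal{R}:=(-L,L)\times(\underline\theta,\underline\theta+s)$, with the Neumann condition on $\{\theta=\underline\theta\}$ and the Dirichlet condition on the rest of the boundary, let $\lambda_L^\tau$ be the principal eigenvalue of $\mathcal{A}_0^\tau:=-d^\tau\partial_{\xi\xi}-\partial_{\theta\theta}-(1-m)$. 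Testing the Rayleigh quotient against the separated function $\cos(\tfrac{\pi\xi}{2L})\,\chi_s(\theta)$, with $\chi_s$ a principal eigenfunction for $\beta_s$, and using $\underline\theta\le d^\tau\le\underline\theta+s$ on $\mathcal{R}$, yields $\lambda_L^\tau\le(\underline\theta+s)(\tfrac{\pi}{2L})^2+\beta_s$ for every $\tau\in[0,1]$; choosing $L$ large (depending only on $s$) so that $(\underline\theta+s)(\tfrac{\pi}{2L})^2<\tfrac14\gamma_\infty$ gives $\lambda_L^\tau<-\tfrac12\gamma_\infty$, uniformly in $\tau$.

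The heart of the argument is the claim that $\sup_{\mathcal{R}}\mu\ge\kappa_0$ for some $\kappa_0=\kappa_0(m)>0$ and all $a,b$ large enough (also large enough for \Cref{lem:nc} and \Cref{lem:tails}). Indeed, $\mu$ is positive on $\mathcal{R}$ and satisfies $(\mathcal{A}_0^\tau+\nu)\mu=0\ge0$ there, with $\mu_\theta=0$ on $\{\theta=\underline\theta\}$; reflecting evenly across that boundary and applying the Berestycki--Nirenberg--Varadhan characterisation of the principal eigenvalue (a positive supersolution forces it to be $\ge0$), we get that the principal eigenvalue of $\mathcal{A}_0^\tau+\nu$ on $\mathcal{R}$ is $\ge0$, hence, by monotonicity in the potential, $0\le\lambda_L^\tau+\sup_{[-L,L]}\nu$, so that $\nu(\xi_1)=\sup_{[-L,L]}\nu\ge-\lambda_L^\tau>\tfrac12\gamma_\infty$ for some $\xi_1\in[-L,L]$. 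Subtracting the tail contribution (which is $<\tfrac1{100}\gamma_\infty$ by the choice of $s$) gives $\int_{\underline\theta}^{\underline\theta+s}\mu(\xi_1,\theta)\,d\theta>\tfrac14\gamma_\infty$, whence $\sup_{\theta}\mu(\xi_1,\theta)>\tfrac{\gamma_\infty}{4s}=:\kappa_0$, which proves the claim. Finally, after the same even reflection across $\{\theta=\underline\theta\}$, $\mu$ is a positive solution of a uniformly elliptic equation whose coefficients are bounded independently of $\tau,a,b$ on a fixed neighbourhood of $\overline{\mathcal{R}}$, so the De Giorgi--Nash--Moser Harnack inequality gives $\sup_{\mathcal{R}}\mu\le C_H\inf_{\mathcal{R}}\mu$ with $C_H=C_H(m)$; combined with the claim this yields $\mu(0,\underline\theta)\ge\inf_{\mathcal{R}}\mu\ge\kappa_0/C_H=:\epsilon_0>0$.

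I expect the main obstacle to be the control of the non-local term $\nu$ on the box $\mathcal{R}$, uniformly in $\tau\in[0,1]$ and in the slab sizes $a,b$: this is precisely where \Cref{lem:nc} (to bound $\|\mu\|_\infty$, hence each $\nu(\xi)$ by a trait integral) and \Cref{lem:tails} together with the WKB decay of $Q_\infty^\delta$ (to discard the contribution of large traits and to neutralise the back boundary datum $Q_b^\delta$ as $b\to\infty$) are essential --- and it is the reason the $\delta$-regularisation of the trade-off was introduced in \eqref{eq:eigenpb}. A secondary, purely bookkeeping, difficulty is the presence of mixed Dirichlet--Neumann conditions on $\mathcal{R}$; I propose to remove it throughout by an even reflection across $\{\theta=\underline\theta\}$, after which the standard tools --- the variational characterisation of eigenvalues, the Berestycki--Nirenberg--Varadhan principle, and the Harnack inequality --- apply with purely interior or Dirichlet hypotheses.
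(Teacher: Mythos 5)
Your argument is correct, but it runs along a genuinely different track than the paper's. The paper's proof of \Cref{lem:bottom} is by contradiction and stays at the level of pointwise comparison: assuming $\mu(0,\underline\theta)\leq \gamma_\infty/(4C_{r,s}s)$, it uses the Harnack inequality \emph{upward} to make the whole nonlocal term $\nu$ small on a fixed box (the near part controlled by $C_{r,s}\,\mu(0,\underline\theta)$, the tail by \Cref{lem:tails}), so that $\mu$ becomes a strict positive supersolution of the box eigenvalue problem; it then runs the same sliding/touching argument as in \Cref{lem:upboundc} against the explicitly separated eigenfunction $\cos(\pi\xi/2r)V_{r,s}(\theta)$ (separation works exactly because $d^\tau$ depends only on $\theta$) to reach a contradiction. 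You instead argue directly: crude control $\nu\leq\bar\nu$ via \Cref{lem:nc} and \Cref{lem:tails}, a Rayleigh-quotient bound showing the mixed principal eigenvalue of the local linearization on a large fixed box is below $-\gamma_\infty/2$ uniformly in $\tau$ (valid here because $c=0$ kills the drift and the operator is self-adjoint), the Berestycki--Nirenberg--Varadhan characterization plus monotonicity in the potential to force $\sup_{[-L,L]}\nu\geq\gamma_\infty/2$, the tail estimate to convert this into $\sup_{\mathcal R}\mu\geq\gamma_\infty/(4s)$, and finally Harnack \emph{downward} to transfer this to $(0,\underline\theta)$. Both routes rest on the same three ingredients ($\gamma_\infty>0$ through a large-box eigenvalue, the tail bound, Harnack), and both yield a constant of the same shape; what yours buys is the avoidance of the sliding argument and of the explicit separated eigenfunction (no case analysis at the touching point), plus the intermediate statement that $\nu$ must exceed $\gamma_\infty/2$ somewhere, while the paper's version reuses machinery already set up for \Cref{lem:upboundc} and needs the uniform $L^\infty$ bound only for the tail. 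Two points you should make explicit, though both are standard: after the even reflection across $\{\theta=\underline\theta\}$ the reflected diffusion coefficient $d^\tau(|\theta-\underline\theta|+\underline\theta)$ is only Lipschitz, so you should note that the BNV characterization and the Harnack inequality are being used for merely bounded measurable, uniformly elliptic coefficients; and you should record why the Dirichlet principal eigenvalue on the doubled box coincides with the mixed Neumann--Dirichlet eigenvalue $\lambda_L^\tau$ on $\mathcal R$ (the even extension of the mixed principal eigenfunction is a positive Dirichlet eigenfunction of the reflected problem), since this identification is what links your Rayleigh-quotient estimate to the BNV step.
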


\begin{proof}
The idea of the proof is to build a sub-solution.  Since the full problem does not enjoy the comparison principle, we use the Harnack inequality to compare with a local equation. 

For $r,s>0$, consider the following spectral problem in both variables $(\xi,\theta)$:
\begin{equation}\label{eq:evpb}
\begin{cases}
d \varphi_{\xi\xi}   +  \varphi_{\theta\theta}  + (1-m) \varphi   = \gamma_{r,s} \varphi \,, \qquad \text{ on } (-r,r) \times (\underline\theta, \underline\theta + s),\\
\varphi_\theta (\cdot,\underline\theta) = 0\,, \quad \varphi (\cdot,\underline\theta + s) = 0\,, \quad \varphi(\pm r,\cdot) = 0.
\end{cases}
\end{equation}
%
The eigenvector (up to a multiplicative constant) $\varphi$ is given by 
\begin{equation*}
\forall (\xi,\theta) \in (-r,r) \times (\underline\theta, \underline\theta + s), \qquad \varphi(\xi,\theta)= \cos\Big( \frac{\pi}{2}\frac{\xi}{r}\Big) V_{r,s}(\theta),
\end{equation*}
where we have introduced the eigenelements $(\gamma_{r,s},V_{r,s})$ with normalization $\|V_{r,s}\|_\infty = 1$ by
\begin{equation*}
\begin{cases}
V_{r,s}''  + \big( - \frac{\pi^2}{4r^2} d - \gamma_{r,s} + (1-m) \big) V_{r,s}   = 0 \,, \quad \theta \in (\underline\theta, \underline\theta + s),\\
V_{r,s}'(\underline\theta) = 0, \quad V_{r,s}(\underline\theta + s) = 0, \quad V_{r,s} > 0.
\end{cases}
\end{equation*}
Since $\lim_{r\to\infty} V_{r,s} = Q_s$, then  $\lim_{s\to\infty}\lim_{r\to\infty} \gamma_{r,s} = \gamma_\infty$.
Thus we may fix 
$s$ and $r$ sufficiently large so that $\gamma_{r,s} > \gamma_\infty/2$.  Notice that we require $r \leq a$ and $s \leq b$ for $\varphi$ to be a sub-solution.

In order to compare $\varphi$ and $\mu$, we must first estimate $\nu$. We decompose it as follows 
\begin{equation*}
\nu = \int_{(\underline\theta, \underline\theta + s)} \mu(\cdot,\theta) d\theta + \int_{(\underline\theta + s, \underline\theta + b)} \mu(\cdot,\theta) d\theta, 
\end{equation*} 
and then estimate the two terms separately. 
First, due to the Harnack inequality 
there exists a constant $C_{r,s}$, depending only on $r$ and $s$, such that
\begin{equation*}
C_{r,s} \mu(0,\underline\theta)
	\geq \sup_{(-r,r) \times (\underline\theta, \underline\theta + s)} \mu(\xi , \theta).
\end{equation*}
Moreover, due to \Cref{lem:tails}, 
\begin{equation*}
\int_{(\underline\theta + s, \underline\theta + b)} \mu(\cdot,\theta) d\theta \leq C_{tail} (1 + \|\mu\|_\infty)  \int_{(\underline\theta + s, +\infty)} Q_\infty^\delta(\theta) d\theta \leq \frac{\gamma_\infty}{4},
\end{equation*}
when $s$ is chosen sufficiently large. To compare \eqref{eq:slab} to \eqref{eq:evpb} on $(-r,r) \times (\underline\theta, \underline\theta + s)$, we write
\begin{equation*}
d \mu_{\xi\xi} + \mu_{\theta\theta} + (1-m) \mu = \mu \nu \leq \mu \left( C_{r,s} s \mu(0,\underline\theta) + \gamma_\infty/4 \right).
\end{equation*}
We deduce from this computation that if $\mu(0,\underline\theta)  \leq \gamma_\infty/(4C_{r,s} s)$, we have \begin{equation*}
\left( C_{r,s}s  \mu(0,\underline\theta) + \gamma_\infty/4  \right) \mu(\xi,\theta) < (\gamma_\infty/2) \mu(\xi,\theta) < \gamma_{r,s} \mu(\xi,\theta), \qquad \text{on } (-r,r) \times (\underline\theta, \underline\theta + s).
\end{equation*}
Hence, if $\mu(0,\underline\theta) \leq \gamma_\infty/(4C_{r,s}s)$, then $\mu$ is a super-solution of \eqref{eq:evpb}.  If $\mu(0,\underline\theta) \leq \gamma_\infty/(4C_{r,s}s)$ does not hold, we are finished.  As such, we assume that it does not for the sake of contradiction.

We now use the same arguments as in the proof of \Cref{lem:upboundc}. Indeed, we define
\begin{equation*}
\alpha_0 = \max \left\lbrace \alpha \in \R^+ \, : \, \alpha \varphi < \mu \text{ on } [-r,r] \times [\underline\theta, \underline\theta + s]  \right\rbrace,
\end{equation*}
so that $u:= \mu - \alpha_0 \varphi$ has a zero minimum at some $(\xi_0,\theta_0) \in [-r,r] \times [\underline\theta, \underline\theta + s]$ and satisfies 
\begin{equation*}
\begin{cases}
d u_{\xi\xi} + u_{\theta\theta} + (1-m) u  < \gamma_{r,s} u,& \quad \text{on } (-r,r) \times (\underline\theta, \underline\theta + s),\\
u_\theta (\cdot,\underline\theta) = 0, \quad u(\cdot,\underline\theta + s) > 0, \quad u(-r,\cdot) > 0, \quad u(r,\cdot)  > 0. \\
\end{cases}
\end{equation*}
As in Lemma \ref{lem:upboundc} this cannot hold. This contradiction implies that $\mu(0,\underline\theta) > \frac{\gamma_\infty}{4 s C_{r,s}}$.

\end{proof}

\subsubsection{The homotopy argument}\label{sec:homotopy}

\begin{proposition}[Existence of a solution in the slab]\label{slabsol}
There are positive constants $\underline\eps_0$ and $b_0$ such that if $\underline \eps < \underline\eps_0$ and $b > b_0$ then there exists $a_0(b,\underline\eps)$ such that if $a > a_0(b,\underline\eps)$ then the slab problem~\eqref{eq:slab} with the normalization condition $\mu_0(0,\underline \theta) = \underline\epsilon$ has a solution $(c,\mu)$.
\end{proposition}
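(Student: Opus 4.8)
The plan is to run a Leray--Schauder degree argument, homotoping in the diffusivity parameter $\tau$ from a problem we can solve by hand ($\tau=0$, when the operator $-d^0\partial_{\xi\xi}-\partial_{\theta\theta}$ has constant coefficients) to the target problem ($\tau=1$). First I would recast~\eqref{eq:slab} as a fixed point problem: given $(\mu,c)$ in a suitable Banach space (say $\mathcal{X} = \mathcal{C}^{1,\alpha}\big([-a,a]\times[\underline\theta,\underline\theta+b]\big)\times\mathbb{R}$, subject to the boundary data), define $\nu=\int\mu\,d\theta$, solve the linear problem
\[
	-c\, \widetilde\mu_\xi - d^\tau\, \widetilde\mu_{\xi\xi} - \widetilde\mu_{\theta\theta} = [\mu]_+\left(1-m-\nu\right),
\]
with the prescribed Dirichlet/Neumann data, and adjust $c$ by a Newton-type correction that drives $\mu(0,\underline\theta)$ toward $\underline\epsilon$ (this is the standard device for handling the translation invariance of the limit problem). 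Elliptic regularity makes the resulting map $\mathcal{T}_\tau:\mathcal{X}\to\mathcal{X}$ compact, and a solution of~\eqref{eq:slab} is exactly a fixed point of $\mathcal{T}_\tau$.

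The heart of the argument is the a priori bound that makes the degree well-defined and homotopy-invariant. I would first invoke \Cref{lem:upboundc} to get $c\le c^*$, and \Cref{lem:bottom} together with the normalization to force $c$ away from $0$ — more precisely, I would use \Cref{lem:bottom} to rule out $c=0$ outright, so on the homotopy the speed stays in a compact subinterval of $(0,c^*+1]$ provided $\underline\epsilon<\epsilon_0$ (here $\epsilon_0$ is the constant from \Cref{lem:bottom}). Once $c$ is confined to $[0,c^*+1]$, \Cref{lem:tails} controls the trait tails and \Cref{lem:nc} gives the uniform $L^\infty$ bound $\|\mu\|_\infty\le C_0$, hence a $\mathcal{C}^{1,\alpha}$ bound by elliptic regularity; the maximum-principle argument already in the text shows $\mu\ge0$ (so $[\mu]_+=\mu$ on solutions). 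Crucially all of these bounds are independent of $\tau\in[0,1]$ and of $a$ once $a>a_0(b,\underline\epsilon)$ and $b>b_0$, which is exactly what degree theory requires: all solutions along the homotopy lie in a fixed ball $\mathcal{O}\subset\mathcal{X}$, with no solutions on $\partial\mathcal{O}$.

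Then homotopy invariance gives $\deg(\mathrm{Id}-\mathcal{T}_1,\mathcal{O},0)=\deg(\mathrm{Id}-\mathcal{T}_0,\mathcal{O},0)$, and it remains to compute the right-hand side. For $\tau=0$ the operator is $-c\partial_\xi-d^0\partial_{\xi\xi}-\partial_{\theta\theta}$ with constant leading coefficients; separating variables in $\theta$ against the eigenbasis of $\partial_{\theta\theta}+(1-m)$ on $(\underline\theta,\underline\theta+b)$ with the mixed Neumann/Dirichlet conditions, the nonlocal Fisher--KPP problem on the slab becomes, mode by mode, essentially the classical one-dimensional KPP traveling-wave-in-a-box problem, for which existence and nonvanishing degree are known (cf.\ the construction in \cite{BouinCalvez} restricted to bounded diffusivity, or \cite{AlfaroCovilleRaoul}). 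This pins down $\deg(\mathrm{Id}-\mathcal{T}_0,\mathcal{O},0)\ne0$, whence $\deg(\mathrm{Id}-\mathcal{T}_1,\mathcal{O},0)\ne0$ and~\eqref{eq:slab} has a solution with $\nu(0)=\epsilon$, proving \Cref{slabsol}.

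The main obstacle I anticipate is not any single estimate — those are supplied by Lemmas~\ref{lem:upboundc}--\ref{lem:bottom} — but rather the bookkeeping needed to guarantee that the a priori bounds are genuinely uniform across the \emph{entire} parameter range $(\tau,a)$ simultaneously, and that the cutoff $[\mu]_+$, the $c$-correction, and the normalization $\mu(0,\underline\theta)=\underline\epsilon$ interact so that the fixed-point set never touches $\partial\mathcal{O}$ (in particular that $c$ cannot escape to $0$, which is where \Cref{lem:bottom} and the smallness of $\underline\epsilon$ are indispensable). A secondary technical point is verifying compactness and continuity of $\mathcal{T}_\tau$ up to the corner points of the slab, where the Neumann and Dirichlet conditions meet; this is handled by the usual reflection/localization arguments and does not affect the degree computation.
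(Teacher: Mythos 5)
Your proposal takes essentially the same route as the paper's proof: the same fixed-point map with the speed corrected by the normalization defect, compactness via elliptic regularity, the a priori bounds of \Cref{lem:upboundc}, \Cref{lem:tails}, \Cref{lem:nc} and \Cref{lem:bottom} (the latter with $\underline\epsilon<\epsilon_0$ keeping solutions away from the boundary $c=0$ of the admissible set), and homotopy invariance of the Leray--Schauder degree down to $\tau=0$. The only cosmetic difference is the $\tau=0$ degree computation: rather than a mode-by-mode separation of variables (which the nonlocal nonlinearity would not literally permit), the paper reflects the problem across $\theta=\underline\theta$ to a symmetric slab with constant diffusivity and quotes \cite[Proposition 3.9]{AlfaroCovilleRaoul} to conclude the degree is $-1$ --- the same reference you fall back on, so this is a presentational point rather than a gap.
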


\begin{proof}
Fix $\beta \in (0,1)$.  Given $\mu \in \mathcal{C}^{1,\beta}((-a,a) \times (\underline\theta, \underline\theta + b))$, we consider the problem

\begin{equation}\label{eq:tauslab}
\begin{cases}
-c Z_\xi^\tau - d^\tau Z_{\xi\xi}^\tau - Z_{\theta\theta}^\tau =  \mu_+ (1 - m -  \nu_\mu),\qquad \text{on }  (-a,a) \times (\underline\theta, \underline\theta + b), \\
Z_\theta^\tau(\cdot,\underline \theta) = 0, \quad Z^\tau(\cdot, \underline \theta + b) = 0, \quad
Z^\tau(-a,\cdot) = Q_b, \quad Z^\tau(a,\cdot)  = 0.
\end{cases}
\end{equation}

Here we have introduced the notation $\nu_\mu$ to emphasize that it corresponds to the density associated to $\mu$ (\textit{i.e.} $\nu_\mu(\cdot) := \int_{(\underline\theta,\underline\theta + s)} \mu(\cdot,\theta) d\theta$) and not to $Z^\tau$.  We introduce the map 
\begin{equation*}
\mathcal{K}_{\tau}: (c,\mu) \to \left(\underline \epsilon - \mu(0,\underline \theta) + c , Z^{\tau} \right), 
\end{equation*}
where $Z_{\tau}$ is the solution of the previous linear system \eqref{eq:tauslab}. 
Since~\eqref{eq:tauslab} is elliptic the map $\mathcal{K}_{\tau}: X \to X$ is a compact map where $X := \mathbb{R} \times \mathcal{C}^{1,{\beta}} \left( (-a,a) \times (\underline\theta, \underline\theta + b)\right)$ with the norm $\Vert (c , \mu) \Vert := \max \left( \vert c \vert, \Vert \mu \Vert_{ \mathcal{C}^{1,\beta}} \right)$. Moreover, it depends continuously on the parameter $\tau \in \left[ 0 , 1 \right]$. Solving the problem 
\eqref{eq:slab} is equivalent to proving that the kernel of $\text{Id} - \mathcal{K}_1$ is non-trivial. We can now apply the Leray-Schauder theory. 

Fix $C_0$ from \Cref{lem:nc}, $\underline\epsilon < \underline\epsilon_0$, and $a > a_0(\underline\epsilon,b)$.  We define the open set
\begin{equation*}
\mathcal{B} = \left \lbrace (c,\mu) \; \vert \; 0 < c < c^* + 1, \; \Vert \mu \Vert_{\mathcal{C}^{1,\beta}\left( (-a,a) \times (\underline\theta, \underline\theta + b)\right)} < C_0 + 1 \right\rbrace.
\end{equation*} 
The a priori estimates of Lemmas \ref{lem:upboundc} and \ref{lem:nc}, give that for all $\tau \in \left[ 0 , 1\right]$ and sufficiently large $a$, the operator $ \text{Id}  - \mathcal{K}_{\tau}$ cannot vanish on the boundary of $\mathcal{B}$. 
The Leray-Schauder degree theory yields the homotopy invariance
\begin{equation*}
	\text{deg}\left( \text{Id} - \mathcal{K}_{1} , \mathcal{B} , 0 \right) 
		= \text{deg}\left(  \text{Id} - \mathcal{K}_{0} , \mathcal{B} , 0 \right).
\end{equation*}
To compute $\text{deg}\left(  \text{Id} - \mathcal{K}_{0} , \mathcal{B} , 0 \right)$, we investigate the problem corresponding to $\tau = 0$.
This problem is equivalent to solving the following symmetrized equation on $(-a,a)\times(-b,b)$:
\begin{equation*}
\begin{cases}
	-c \widetilde{Z}_\xi ^0 - \underline\theta \widetilde{Z}_{\xi\xi}^0  - \widetilde{Z}_{\theta\theta}^0 = \mu_+ (1 - m(|\theta-\underline\theta| + \underline\theta) - \frac12 \nu_\mu), \qquad \text{on }(-a,a)\times(-b,b), \\
	\widetilde{Z}^0(\cdot, - b) = \widetilde{Z}_\theta^0(\cdot,b) = 0,
	\quad \widetilde{Z}^0(-a,\cdot) = Q_b(\vert \cdot \vert + \underline\theta),
	\quad \widetilde{Z}^0(a,\cdot)  = 0.
\end{cases}
\end{equation*}
Using this formulation, that $-1 
= \deg\left(  \Id - \mathcal{K}_{0} , \mathcal{B} , 0 \right)$ is exactly the purpose of \cite[Proposition 3.9]{AlfaroCovilleRaoul}. 
Since $ -1 = \deg(\Id - \mathcal{K}_0, \mathcal{B},0)$, then $\mathcal{K}_1$ has a fixed point.  This finishes the proof.
\end{proof}

\subsubsection{Construction of a spatial travelling wave in $\R\times (\underline\theta,\underline\theta + b)$}

We now use the solution of the slab problem \eqref{eq:slab} given by Proposition~\ref{slabsol} to construct a travelling wave solution. For this purpose, we first pass to the limit $a \to \infty$ to obtain a profile in $\R\times(\underline\theta,\underline\theta+b)$. Then we prove that this profile has speed $c_b^*$ and the correct asymptotics as $\xi \to \pm\infty$.

\begin{lemma}\label{lem:convslab}
Let $\underline\eps < \underline\eps_0$. There exists $c \in \left[ 0 , c_b^* \right]$ such that the system
\begin{equation}\label{convslab2}
\begin{cases}
- c_0 \mu_{\xi} - \theta \mu_{\xi\xi} - \mu_{\theta\theta} = \mu (1 - m - \nu),  & \text{on } \R\times(\underline\theta,\underline\theta+b), \\
 \mu_\theta(\cdot,\underline\theta) = 0, \qquad \mu(\cdot, \underline\theta + b) = 0, \\
\end{cases}
\end{equation}
has a nonnegative solution $\mu \in \mathcal{C}_b^2\left( \R \times (\underline\theta,\underline\theta + b)\right)$ satisfying $\mu(0,\underline\theta) = \underline\eps$. 
\end{lemma}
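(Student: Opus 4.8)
The plan is to obtain $(c_{0},\mu)$ as a limit, along a sequence $a=a_{k}\to\infty$ with $b$ and $\underline\eps<\eps_{0}$ held fixed, of the slab solutions $(c_{a,b},\mu_{a,b})$ produced by \Cref{slabsol}; since all the analytic work lives in the a priori estimates already established, this is essentially a compactness argument. First I would collect the uniform bounds. As $\mu_{a,b}\geq 0$ (by the maximum-principle argument given immediately after \eqref{eq:slab}, which also lets us drop the positive part in that equation), \Cref{lem:upboundc} gives $c_{a,b}\leq c^{*}$, while the construction in the open set $\mathcal{B}$ in the proof of \Cref{slabsol} forces $c_{a,b}>0$, and \Cref{lem:nc} gives $\|\mu_{a,b}\|_{L^{\infty}((-a,a)\times(\underline\theta,\underline\theta+b))}\leq C_{0}$ uniformly in $a$; here $b$ is fixed, so the $\theta$-interval is bounded and the tail estimate \Cref{lem:tails} plays no role (though it is consistent with it). Hence, along a subsequence, $c_{a_{k},b}\to c_{0}$ for some $c_{0}\in[0,c^{*}]$; repeating the sliding-supersolution argument of \Cref{lem:upboundc} on the strip $\R\times(\underline\theta,\underline\theta+b)$ with the slab eigenpair $(c^{*}_{b},Q_{\lambda^{*}_{b},b})$ (made strictly positive on $[\underline\theta,\underline\theta+b]$, if desired, by enlarging the slab slightly in $\theta$) refines this to $c_{0}\in[0,c^{*}_{b}]$.

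Next I would upgrade the $L^{\infty}$ bound to local $C^{2,\alpha}$ regularity, uniformly in $a$. For fixed $a$, $\mu_{a,b}$ solves the linear elliptic equation
\[
-c_{a,b}\,(\mu_{a,b})_{\xi}-\theta\,(\mu_{a,b})_{\xi\xi}-(\mu_{a,b})_{\theta\theta}=f_{a,b},\qquad f_{a,b}:=\mu_{a,b}\bigl(1-m-\nu_{a,b}\bigr),
\]
whose coefficients are smooth and --- $\theta$ ranging over the compact interval $[\underline\theta,\underline\theta+b]$ --- uniformly elliptic with bounded coefficients, and whose right-hand side is bounded in $L^{\infty}$ uniformly in $a$, since $0\leq\nu_{a,b}\leq bC_{0}$ and $m$ is bounded on $[\underline\theta,\underline\theta+b]$. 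Interior $W^{2,p}$ estimates with Sobolev embedding give a uniform $C^{1,\alpha}$ bound on compact subsets of the open strip; near $\{\theta=\underline\theta\}$ one reduces to the interior case by even reflection in $\theta$ (legitimate by the Neumann condition), and near $\{\theta=\underline\theta+b\}$ one invokes boundary $W^{2,p}$/Schauder estimates with the Dirichlet datum $0$. Since $\nu_{a,b}(\xi)=\int_{(\underline\theta,\underline\theta+b)}\mu_{a,b}(\xi,\theta)\,d\theta$ then inherits a uniform $C^{1}$ bound in $\xi$, the right-hand side $f_{a,b}$ is bounded in $C^{\alpha}$ on compacts, so a Schauder bootstrap yields uniform $C^{2,\alpha}_{\mathrm{loc}}$ bounds; because the coefficients and the bound on $\nu_{a,b}$ are independent of $\xi$, these estimates are translation-invariant in $\xi$ and hence uniform on every unit ball meeting $\R\times[\underline\theta,\underline\theta+b]$.

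I would then pass to the limit: taking an exhaustion of $\R\times[\underline\theta,\underline\theta+b]$ by compact sets and applying Arzel\`a--Ascoli together with a diagonal argument, extract a further subsequence along which $\mu_{a_{k},b}\to\mu$ in $C^{2}_{\mathrm{loc}}$, with $\mu\geq 0$ and $\|\mu\|_{L^{\infty}}\leq C_{0}$, so that $\mu\in\mathcal{C}^{2}_{b}(\R\times(\underline\theta,\underline\theta+b))$. The non-local term converges pointwise, $\nu_{a_{k},b}(\xi)\to\nu(\xi):=\int_{(\underline\theta,\underline\theta+b)}\mu(\xi,\theta)\,d\theta$, by bounded convergence on the bounded $\theta$-interval. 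Passing to the limit in the equation and the boundary conditions, $(c_{0},\mu)$ solves \eqref{convslab2} with the Neumann condition at $\underline\theta$ and the Dirichlet condition at $\underline\theta+b$, and the normalisation survives because $\mu(0,\underline\theta)=\lim_{k}\mu_{a_{k},b}(0,\underline\theta)=\underline\eps$.

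The only points that genuinely need care are the uniform-up-to-the-boundary regularity (Neumann side handled by reflection, Dirichlet side by standard boundary estimates) and the accompanying $\xi$-translation uniformity that turns local bounds into a bound in $\mathcal{C}^{2}_{b}$; everything else is routine precisely because the $\theta$-domain is bounded for fixed $b$. I would also emphasise that the normalisation $\mu(0,\underline\theta)=\underline\eps>0$ is preserved under the limit, which is exactly what keeps the compactness argument from returning the trivial solution $\mu\equiv 0$. Identifying $c_{0}$ with the minimal slab speed $c^{*}_{b}$ (in particular ruling out $c_{0}=0$) and establishing the asymptotics $\mu(\xi,\cdot)\to Q^{\delta}_{b}$ as $\xi\to-\infty$ and $\mu(\xi,\cdot)\to 0$ as $\xi\to+\infty$ are done in the subsequent steps, not here.
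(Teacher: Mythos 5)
Your proposal is correct and follows essentially the same route as the paper, which proves \Cref{lem:convslab} exactly by combining the uniform bounds of \Cref{lem:upboundc}, \Cref{lem:tails} and \Cref{lem:nc} with classical elliptic regularity and an Arzel\`a--Ascoli/diagonal extraction as $a\to\infty$. You in fact supply more detail than the paper does (reflection at the Neumann boundary, translation-invariant Schauder bounds, and the refinement from $c_0\leq c^*$ to $c_0\leq c_b^*$ via the slab eigenpair on a slightly enlarged trait interval), all of which is consistent with the intended argument.
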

\begin{proof}
Using the uniform bounds above, along with classical elliptic regularity theory, and applying the Arzela-Ascoli theorem, we take the limit $a\to \infty$ to obtain the lemma.
\end{proof}

\subsubsection*{The profile is travelling with the minimal speed $c_b^*$.}

In this section, we present the arguments showing that the constructed front has the minimal speed $c_b^*$. It roughly goes as follows.  First, any solution with $c \leq c^*_b$ that is bounded away from zero at $\{\theta = \underline\theta\}$ is, in fact, bounded from below on $\{\theta=\underline\theta\}$ by a constant $\omega>0$ that is independent of $\mu$.  Contrasting this lower bound with the choice of normalization, we argue that $\lim_{\xi\to \infty} \mu(\xi,\underline\theta) = 0$.  Finally, by building oscillating sub-solutions, we show that this limit holds only when $c \geq c^*_b$, which, in view of \Cref{lem:upboundc} implies that $c = c^*_b$. The ideas used here are similar to those in \cite{AlfaroCovilleRaoul,BouinCalvez}.


\begin{lemma}\label{lem:inf}
There exists $\omega > 0$ such that any solution $(c,\mu)$ constructed in \Cref{slabsol}
with $c \in \left[ 0 , c_b^*\right]$ and $\inf_{\xi \in \R} \mu(\xi,\underline\theta)  > 0$ satisfies $\inf_{\xi \in \R} \mu(\xi,\underline\theta) > \omega$.
\end{lemma}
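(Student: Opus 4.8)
The plan is to argue by contradiction, exploiting the uniform-in-$a,b$ estimates already available together with a ``zero energy'' / sliding argument at the bottom boundary $\theta=\underline\theta$. Suppose the statement fails; then there is a sequence of solutions $(c_k,\mu_k)$ built as in \Cref{slabsol} with $c_k \in [0,c_b^*]$, with $\inf_{\xi} \mu_k(\xi,\underline\theta) > 0$ for each $k$, but with $\inf_\xi \mu_k(\xi,\underline\theta) \to 0$. Using \Cref{lem:nc} and \Cref{lem:tails} we have a uniform $L^\infty$ bound on the $\mu_k$ and a uniform pointwise bound by $C_{\mathrm{tail}}Q_\infty^\delta$, and then classical interior elliptic regularity gives uniform $\mathcal{C}^{2,\beta}_{\mathrm{loc}}$ bounds. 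After translating each $\mu_k$ so that $\mu_k(0,\underline\theta) \le \inf_\xi \mu_k(\xi,\underline\theta) + 1/k$ (this uses that the problem on $\R\times(\underline\theta,\underline\theta+b)$ is translation invariant in $\xi$, which is why we work with the $a=\infty$ profiles of \Cref{lem:convslab} rather than the slab profiles), we extract a subsequential limit $(c_\infty,\mu_\infty)$ solving the same equation~\eqref{convslab2} with $c_\infty\in[0,c_b^*]$, $\mu_\infty \ge 0$, and $\mu_\infty(0,\underline\theta)=0=\inf_\xi \mu_\infty(\xi,\underline\theta)$.

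The core of the argument is then to derive a contradiction from the existence of such a limiting profile which touches zero at an interior point of its domain. Since $\mu_\infty \ge 0$ and $\mu_\infty$ satisfies the linear inequality
\[
	-c_\infty (\mu_\infty)_\xi - \theta (\mu_\infty)_{\xi\xi} - (\mu_\infty)_{\theta\theta} - (1-m)\mu_\infty
		= -\nu_\infty \mu_\infty \ge -\|\nu_\infty\|_\infty \mu_\infty,
\]
the function $\mu_\infty$ is a nonnegative supersolution of a uniformly elliptic operator (with bounded coefficients on $\R\times[\underline\theta,\underline\theta+b]$, since $\theta$ ranges over the bounded interval $[\underline\theta,\underline\theta+b]$) which vanishes at the interior point $(0,\underline\theta)$ — interior in the sense that $\theta=\underline\theta$ carries a Neumann condition, so the strong maximum principle together with the Hopf lemma applies there exactly as it did in the proofs of \Cref{lem:upboundc} and \Cref{lem:bottom}. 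Hence $\mu_\infty \equiv 0$ on its whole domain. But this contradicts the normalization: each $\mu_k$ has $\mu_k(0,\underline\theta)$ bounded below by the quantity $\epsilon_0$, or rather we must ensure the limiting profile is not trivial. This is the delicate point, discussed below.

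The resolution, and the main obstacle, is precisely to guarantee non-triviality of the limit — equivalently, to rule out a ``collapse to zero'' after translation. This is handled by an argument parallel to \Cref{lem:bottom}: one shows that as long as $\mu_k$ is not identically zero near the bottom, the value $\mu_k(0,\underline\theta)$ after recentering cannot be arbitrarily small, because otherwise, using the Harnack inequality (\Cref{lem:harnack}) to control the nonlocal term $\nu_{\mu_k}$ by a small multiple of $\sup \mu_k$ on a fixed box near $(\xi,\theta)=(0,\underline\theta)$ and the tail bound \Cref{lem:tails} to control the contribution of large traits, $\mu_k$ would be a supersolution of the eigenvalue problem~\eqref{eq:evpb} on a fixed rectangle $(-r,r)\times(\underline\theta,\underline\theta+s)$ with $\gamma_{r,s}>\gamma_\infty/2 > 0$; comparing $\mu_k$ against $\alpha_0 \varphi$ as in \Cref{lem:upboundc} and \Cref{lem:bottom} then forces $\mu_k(0,\underline\theta) > \gamma_\infty/(4sC_{r,s}) =: \omega$, a positive constant independent of $k$ and of the translation. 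This $\omega$ is the constant claimed in the lemma, and it simultaneously supplies the contradiction with $\inf_\xi\mu_k(\xi,\underline\theta)\to 0$ and establishes the stated lower bound. The hard part is bookkeeping the recentering so that the Harnack box and the eigenvalue rectangle stay fixed while $k\to\infty$, and checking that $c_k$ staying in $[0,c_b^*]$ does not interfere with the comparison; both are routine given the uniform estimates, but must be stated carefully.
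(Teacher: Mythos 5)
There is a genuine gap, and it sits exactly at the point you dismiss as ``routine'': the effect of the drift term $-c\mu_\xi$ for $c\in(0,c_b^*]$. The comparison of \Cref{lem:bottom} that you invoke to force $\mu_k(0,\underline\theta)>\gamma_\infty/(4sC_{r,s})$ is proved there only for $c=0$, and it does not transfer verbatim: the Dirichlet eigenfunction $\varphi(\xi,\theta)=\cos\bigl(\tfrac{\pi\xi}{2r}\bigr)V_{r,s}(\theta)$ is not a sub-solution of the equation with drift, since $c\varphi_\xi=-\tfrac{c\pi}{2r}\sin\bigl(\tfrac{\pi\xi}{2r}\bigr)V_{r,s}(\theta)$ has no sign. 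At the touching point $(\xi_{\max},\theta_{\max})$ of the sliding argument one only gets
\begin{equation*}
0 \;\geq\; \Bigl(\gamma_{r,s}-\tfrac{\gamma_\infty}{4}-C_s\mu\Bigr)\mu \;-\; \alpha_0\,\frac{c\pi}{2r}\,\sin\Bigl(\frac{\pi}{2}\frac{\xi_{\max}-\xi_0}{r}\Bigr)V_{r,s}(\theta_{\max}),
\end{equation*}
and if the touching point lies on the ``wrong'' side of the center the last term is harmful, so no contradiction follows. This is precisely what the paper's proof must (and does) handle: it derives from the display above a quantitative lower bound on $\mu(\xi_{\max},\theta_{\max})$ up to an error of size $O(\alpha_0 c^*/r)$, bounds $\alpha_0\leq\overline\alpha$, and then chooses $r$ large to absorb the error; the hypothesis $\inf_{\xi}\mu(\xi,\underline\theta)>0$ is what makes the sliding start ($\underline\alpha>0$, hence $\alpha_0>0$ well-defined). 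In your write-up that hypothesis plays no role in the final quantitative step, which is a sign the argument is incomplete. (A smaller slip: the Harnack inequality used here is the elliptic one on the rectangle, not the parabolic \Cref{lem:harnack} for the Cauchy problem.)

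A secondary remark: the compactness-plus-strong-maximum-principle half of your proposal does no work in the end. As you yourself note, after recentering the normalization $\mu(0,\underline\theta)=\underline\eps$ is lost, so the conclusion $\mu_\infty\equiv 0$ contradicts nothing; the entire burden then falls on the eigenfunction comparison, i.e.\ on the step that is missing the treatment of the drift. The paper's proof is in fact a direct, translation-uniform argument (compare with $\varphi(\cdot-\xi_0)$ for an arbitrary $\xi_0$, slide, analyze the touching point, absorb the drift term by taking $r$ large), with no limiting procedure needed. To repair your proposal you should drop the compactness part and carry out the drifted touching-point estimate quantitatively, keeping track of how $\alpha_0$, $c\leq c_b^*$, and $r$ enter the error term.
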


\begin{proof}
Fix $\varphi$ as in \Cref{lem:bottom} and fix any point $\xi_0$.  Let $\phi(\xi,\cdot) = \varphi(\xi - \xi_0, \cdot)$.  We construct a sub-solution of $\mu$ using $\phi$ to obtain a bound on $\mu$ at $\xi_0$ independent of $\xi_0$.  Recall that we fix $r$ and $s\leq b$ sufficiently large that $\gamma_{r,s} \geq \gamma_\infty/2$.

By the Harnack inequality, there exists $C_s$ sufficiently large that $\mu(\xi,\theta') \leq C_s \mu(\xi,\theta)$ for any $\theta, \theta'\in(\underline \theta, \underline\theta+s)$ and any $\xi$.  When $s$ is large enough, we may estimate $\nu$ as above to obtain,
\[
	\forall (\xi,\theta) \in \R\times (\underline\theta, \underline\theta + s), \qquad \nu(\xi) \leq C_s \mu(\xi,\theta) + \frac{\gamma_\infty}{4}.
\]

From the $L^\infty$ estimate on $\mu$ and the construction of $\varphi$, there exists $\overline\alpha$ such that $\overline\alpha \phi(\xi_0,\underline\theta) > \mu(\xi_0,\underline\theta)$. 
On the other hand, by the Harnack inequality, we have $\underline\alpha \phi \leq \mu$ where $\underline\alpha := \frac{ \inf_\R \mu(\cdot,\underline\theta)}{ C_s} > 0$ where $V_{r,s}$ is defined in \Cref{lem:bottom} and $\|V_{r,s}\|_\infty = 1$. As a consequence, we can define 
\begin{equation*}
\alpha_0:= \sup \lbrace \alpha > 0 \, : \, \alpha \phi \leq \mu  \text{ on } [-r+\xi_0,r+\xi_0] \times [\underline\theta, \underline\theta + s] \rbrace.
\end{equation*}
As usual, 
there exists $(\xi_{\max} , \theta_{\max})$ such that $\mu - \alpha_0 \phi$ has a minimum of zero at this point.  It is easy to check that this point must occur in the interior of $(-r+\xi_0,r+\xi_0)\times(\underline\theta,\underline\theta+s)$.  Hence,
\begin{equation}\label{eq:phi}
\begin{split}
0 & \geq  - \theta_{\max}( \mu - \alpha_0 \phi)_{\xi\xi} -    ( \mu - \alpha_0 \phi )_{\theta \theta} - c ( \mu - \alpha_0 \phi )_\xi, \\
& \geq  \big(1 - m - C_s \mu - \frac{\gamma_\infty}{4} \big)\mu + \alpha_0 ( \theta_{\max} \phi_{\xi\xi} + \phi_{\theta \theta} + c \phi_\xi ), \\
& \geq  \big(1 - m - C_s \mu - \frac{\gamma_\infty}{4} \big)\mu - \alpha_0 ( (1-m) \phi - \gamma_{r,s} \phi ) - \alpha_0 \frac{c\pi}{2r} \sin\Big( \frac{\pi}{2} \frac{\xi_{\max}-\xi_0}{r} \Big) V_{r,s}(\theta_{\max}),  \\
& \geq  \big( \gamma_{r,s} -\frac{\gamma_\infty}{4}- C_s \mu  \big) \mu - \alpha_0 \frac{c\pi}{2r} \sin\Big( \frac{\pi}{2} \frac{\xi_{\max} - \xi_0}{r}\Big) V_{r,s}(\theta_{\max}).
\end{split}
\end{equation}
From the Harnack inequality, we deduce that $C_s \mu(\xi_{\max},\theta_{\max}) \geq \mu(\xi_{\max},\underline\theta) $.  Recalling also the inequalities $\inf_{\R} \mu(\cdot,\underline\theta) > 0$, $c \leq c_b^*$, $\alpha_0 \leq \overline\alpha$, $\gamma_{r,s} \geq \gamma_\infty/2$, we get
\begin{equation*}
\mu(\xi_{\max} , \theta_{\max}) \geq \displaystyle  \frac{ \gamma_\infty}{4C_s} - \frac{ \pi \overline \alpha c_b^*}{2 r s \mu(\xi_0,\underline\theta)}.
\end{equation*}
Taking $r$ sufficiently large we have that $\mu(\xi_{\max}, \theta_{\max}) \geq \gamma_\infty/(8C_s)$.  
Since $\mu$ and $\alpha_0 \phi$ coincide at $(\xi_{\max},\theta_{\max})$, we have $\alpha_0 \geq \frac{\gamma_\infty }{8C_s}$. We are finished by noting that
\begin{equation*}
	\mu(\xi_0,\underline\theta) \geq \alpha_0 \phi(\xi_0,\underline\theta)
		\geq \frac{\gamma_\infty}{8 C_s } V_{r,s}(\underline\theta).
\end{equation*}
\end{proof}

Next we show that the front has the required limits at infinity.  The second part of this proposition is crucial in the sequel in showing that our front moves with speed $c \geq c^*_b$.

\begin{proposition}\label{prop:limits}
Any solution $(c,\mu)$ of~\eqref{convslab2}
with $c \in[0,c^*_b]$, and $\mu(0,\underline\theta) = \underline\eps$ satisfies\smallskip
\begin{enumerate}
	\item[(i)]\label{point:1} For all sufficiently large $s < b$, there exists $\alpha_s >0$ such that $\mu > \alpha_s Q_s$ on $(-\infty,0)\times(\underline\theta, \underline \theta + s)$;
%
	\item[(ii)]\label{point:2} $\lim_{\xi \to +\infty} \mu(\xi,\cdot) =0.$
\end{enumerate}
\end{proposition}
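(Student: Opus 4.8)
The plan is to prove (i) first, since it shows that $\mu$ stays bounded below at $\xi=-\infty$ and hence, recalling that the solution of \eqref{convslab2} supplied by \Cref{lem:convslab} is the normalized one for which $\inf_{\R}\mu(\cdot,\underline\theta)=0$ and $c=c_b^*$ by \Cref{prop:minspeed}, that this infimum is approached only as $\xi\to+\infty$; (ii) then follows from a compactness argument. Both statements are proved by sub/super-solution touching arguments combined with compactness, parallel to \Cref{lem:bottom}, \Cref{lem:inf}, and the work in \cite{AlfaroCovilleRaoul,BouinCalvez}, the noncompact trait variable being controlled throughout by the tail bound \Cref{lem:tails}.

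For (i), fix $s<b$ large enough that the principal Dirichlet eigenvalue $\gamma_s$ of $\partial_{\theta\theta}+(1-m)$ on $(\underline\theta,\underline\theta+s)$ (Neumann at $\underline\theta$, Dirichlet at $\underline\theta+s$) exceeds $\gamma_\infty/2$, with eigenfunction $Q_s$ normalized by $\|Q_s\|_\infty=1$, and large enough that the tail of $Q_\infty^\delta$ past $\underline\theta+s$ is small. As in the proof of \Cref{lem:bottom}, chaining the Harnack inequality and using \Cref{lem:tails} produces a constant $C_s$ with $\nu(\xi)\le C_s\mu(\xi,\theta)+\gamma_\infty/4$ for all $\xi$ and all $\theta\in(\underline\theta,\underline\theta+s)$. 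Since the $\xi$-independent function $\alpha Q_s$ solves $-c(\alpha Q_s)_\xi-\theta(\alpha Q_s)_{\xi\xi}-(\alpha Q_s)_{\theta\theta}=(1-m-\gamma_s)\alpha Q_s$, I run the sliding argument on the slab solution $\mu_{a,b}$ from which $\mu$ is built: with $\alpha_0:=\sup\{\alpha>0:\alpha Q_s\le\mu_{a,b}\text{ on }[-a,0]\times[\underline\theta,\underline\theta+s]\}$, a touching point of $\mu_{a,b}$ and $\alpha_0Q_s$ cannot lie on $\{\theta=\underline\theta\}$ (Hopf), on $\{\theta=\underline\theta+s\}$ ($Q_s$ vanishes there while $\mu_{a,b}>0$), on $\{\xi=-a\}$ (the left boundary data dominates $\alpha_0 Q_s$ once $\alpha_0$ is below a fixed positive constant independent of $a$), or on $\{\xi=0\}$ (there $\mu_{a,b}(0,\cdot)\ge c_s'\underline\eps>0$ by the Harnack inequality), so it is interior, and there the minimum inequality gives $0\ge\mu_{a,b}(\gamma_s-\nu)\ge\mu_{a,b}(\gamma_\infty/4-C_s\mu_{a,b})$, whence $\mu_{a,b}\ge\gamma_\infty/(4C_s)$ at that point. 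In every case $\alpha_0$ is bounded below by a positive constant $\alpha_s$ independent of $a$; letting $a\to\infty$ gives $\mu\ge\alpha_sQ_s$ on $(-\infty,0]\times[\underline\theta,\underline\theta+s]$, and the strict inequality of (i) follows after shrinking $\alpha_s$ (on the open set $Q_s>0$).

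For (ii), part (i) gives $\mu(\cdot,\underline\theta)\ge\alpha_sQ_s(\underline\theta)>0$ on $(-\infty,0)$, so $\liminf_{\xi\to+\infty}\mu(\xi,\underline\theta)=0$; by the Harnack inequality on bounded $\theta$-ranges and \Cref{lem:tails} for large $\theta$ it suffices to show $\limsup_{\xi\to+\infty}\mu(\xi,\underline\theta)=0$. Assume instead this limsup is positive, so $\mu(\cdot,\underline\theta)$ oscillates at $+\infty$ and also $\bar\ell:=\limsup_{\xi\to+\infty}\sup_{\theta}\mu(\xi,\theta)>0$, the outer supremum being attained at bounded trait by \Cref{lem:tails}. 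Translating $\mu$ along a sequence $(\xi_n,\theta_n)$ with $\xi_n\to+\infty$ along which $\mu$ approaches $\bar\ell$, and using local elliptic estimates, the Arzela--Ascoli theorem, and \Cref{lem:tails} with dominated convergence for the nonlocal term, I extract a bounded nonnegative solution $\mu_\infty$ of the profile equation at the speed $c=c_b^*$, entire in $\xi$, with density $\nu_\infty$ and $\mu_\infty(0,\theta_*)=\bar\ell=\sup\mu_\infty$ for some $\theta_*\in[\underline\theta,\underline\theta+b)$; translating along "valley" abscissae the limit is identically zero by the strong minimum principle. Since $\mu_\infty>0$ and attains its maximum at $(0,\theta_*)$, evaluating the equation there forces $\nu_\infty(0)\le1$, which in particular excludes $\theta_*$ being a large trait (where $1-m(\theta_*)-\nu_\infty(0)<0$ would make the equation fail). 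It remains to exclude a bounded positive entire-in-$\xi$ solution at the minimal speed $c_b^*$ whose value at $\theta=\underline\theta$ does not decay as $\xi\to+\infty$; this is the main obstacle, and -- because the nonlocal term destroys monotonicity in $\xi$ and the coefficient $1-m-\nu$ has no sign for moderate $\theta$, so that neither a sliding argument nor a maximum principle on the semi-infinite strip applies directly -- it must be carried out using the KPP structure and the precise value of the speed, exactly as in \cite{AlfaroCovilleRaoul,BouinCalvez}, to contradict $\inf_{\R}\mu(\cdot,\underline\theta)=0$. Once $\limsup_{\xi\to+\infty}\mu(\xi,\underline\theta)=0$, the Harnack inequality and \Cref{lem:tails} upgrade it to $\mu(\xi,\cdot)\to0$ uniformly, which is (ii).
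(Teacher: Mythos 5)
Your part (i) is essentially correct and is a legitimate variant of the paper's argument: the paper compares $\mu$ directly on $(-\infty,0]$ with the truncated sub-solutions $\varphi_r=\alpha\cos\left(\tfrac{\pi\xi}{2r}\right)V_{r,s}(\theta)$, fixing the small amplitude $\alpha=\min\left(\tfrac{\underline\eps}{2C_s},\tfrac{\gamma_\infty}{8C_s}\right)$ and widening $r\to\infty$, the Harnack bound $\nu\leq C_s\mu+\tfrac{\gamma_\infty}{4}$ playing the same role as in your version; because these sub-solutions vanish at $\xi=\pm r$, no information on $\mu$ at the left is ever needed, so the argument applies to \emph{any} solution of \eqref{convslab2} with the normalization, whereas your touching argument against the $\xi$-independent $\alpha Q_s$ on $[-a,0]$ uses the slab boundary datum $Q_b^\delta$ at $\xi=-a$ and therefore only covers profiles obtained as limits of the slab solutions of \eqref{eq:slab}. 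This restriction is harmless for how the proposition is used, but it is a genuine difference from the stated claim and from the paper's proof.

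The genuine gap is in (ii). After extracting an entire-in-$\xi$ limit profile along a sequence where $\mu(\cdot,\underline\theta)$ stays bounded below, you explicitly leave the decisive exclusion step ("a bounded positive entire solution at the minimal speed whose trace at $\underline\theta$ does not decay") to the references, so the proof is not closed; moreover your stated reason for abandoning a direct comparison (the nonlocal term and the sign-changing coefficient $1-m-\nu$) is exactly what the paper's proof overcomes, and no compactness or classification of entire solutions is needed. The paper's (ii) is a short re-anchoring of (i): if $\mu(\xi_n,\underline\theta)\geq\delta$ along $\xi_n\to+\infty$, the same bound $\nu\leq C_s\mu+\tfrac{\gamma_\infty}{4}$ and the same touching argument, anchored at $\xi_n$ instead of $0$ with $\delta$ playing the role of $\underline\eps$, yield $\mu\geq\alpha Q_s$ on $(-\infty,\xi_n]\times[\underline\theta,\underline\theta+s]$, hence on all of $\R\times[\underline\theta,\underline\theta+s]$ as $n\to\infty$; then $\inf_{\R}\mu(\cdot,\underline\theta)>0$, so \Cref{lem:inf} forces $\inf_{\R}\mu(\cdot,\underline\theta)>\omega$ with $\omega$ \emph{independent of} $\underline\eps$, which contradicts $\mu(0,\underline\theta)=\underline\eps$ once $\underline\eps<\omega$. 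Finally, your opening claim that $\liminf_{\xi\to+\infty}\mu(\xi,\underline\theta)=0$ imports the property $\inf_{\R}\mu(\cdot,\underline\theta)=0$, which is not among the hypotheses of the proposition and is not needed: the contradiction is obtained against the normalization through \Cref{lem:inf}, not against a previously known value of the infimum.
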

\begin{proof}
We start with the proof of (i).  Recall from the proof of Lemma \ref{lem:inf} that $\mu$ satisfies 
\begin{equation}\label{eq:mu_local_eqn}
\forall \left( \xi , \theta \right) \in \R \times (\underline\theta,\underline\theta + s), \qquad - c \mu_{\xi}  - \theta \mu_{\xi\xi} - \mu_{\theta\theta} \geq \left(1 - m - C_s \mu - \frac{\gamma_\infty}{4}  \right)\mu.
\end{equation}

Let $\underline r$ be sufficiently large so that, for $s$ sufficiently large, $\gamma_{\underline r,s}> \gamma_\infty/2$. For any $r\geq \underline r$,
 define $\varphi_r = \alpha \cos\left(\frac{\pi\xi}{2r}\right)V_{r,s}(\theta)$, as above,
with $\alpha = \min \left( \frac{\underline\eps}{2 \tilde C_s}, \frac{\gamma_\infty}{8 C_s s}  \right)$, where $\tilde C_s$ is defined below.  

We first show that $\varphi_{\underline r}\leq \mu$ on $[- \underline r , 0] \times \left[\underline\theta,\underline\theta + s\right]$. We have $\varphi_{\underline r}  = 0 < \mu$ on $\{-\underline r\} \times \left[\underline\theta,\underline\theta + s\right]$. 
The Harnack inequality, applied on $[-\underline r,0] \times \left[\underline\theta,\underline\theta + s\right]$, yields a postive constant $\widetilde C_s$ such that
\begin{equation}\label{eq:HHn}
\widetilde{C}_s \inf_{[-\underline r,0] \times \left[\underline\theta,\underline\theta + s\right]} \mu \geq  \mu(0,\underline\theta) = \underline\eps.
\end{equation}
Recall that $\|V_{r,s}\|_\infty = 1$.  Thus, on $[-\underline r , 0] \times \left[\underline\theta,\underline\theta + s\right]$, using \eqref{eq:HHn}, we have that $\varphi_{\underline r} \leq \mu$.


As a consequence we can define 
\begin{equation*}
r_0:= \sup \lbrace r \geq \underline r \, : \, \varphi_r \leq \mu \text{ on }\left(- r , 0\right] \times \left[\underline\theta,\underline\theta + s\right]  \rbrace.
\end{equation*}
We now prove that $r_0 = \infty$ by contradiction. Suppose that $r_0 < \infty$. Then, as usual,
there exists $(\xi_0 , \theta_0) \in (-\infty,0] \times \left[\underline\theta,\underline\theta + s\right]$ such that $\mu - \varphi_{r_0}$ has a zero minimum at this point. Note that $\theta_0\neq \underline\theta + s$ and $\xi_0 \neq r_0$ since $\varphi_r$ vanishes at those points. 
Moreover, $\xi_0$ cannot be $0$ since $\varphi_r(0,\theta) < \mu(0,\theta)$ on $(\underline\theta,\underline\theta+s)$, by our choice of $\alpha$ and by~\eqref{eq:HHn}.  
Thus, $(\xi_0 , \theta_0) \in (-r_0,0)\times(\underline\theta,\underline\theta+s)$ or $\theta_0 = \underline \theta$.  In either case, the maximum principle and Hopf maximum principle along with~\eqref{eq:phi} and~\eqref{eq:mu_local_eqn} imply that
\begin{equation*}
0
	\geq \left( \frac{\gamma_\infty}{4}- C_s \mu  \right) \mu -  \alpha \frac{c\pi}{2r_0} \sin\left( \frac{\pi \xi_0}{2 r_0}\right) V_{r_0,s}(\theta_0)
	\geq \left( \frac{\gamma_\infty}{4}- C_s \mu  \right) \mu.
\end{equation*}
Above, we used that $s$ and $r$ are large enough that $\gamma_{r,s} > \gamma_\infty/2$.   Thus,
\begin{equation*}
\frac{\gamma_\infty}{4C_s s} \leq \mu(\xi_0,\theta_0) = \varphi_{r_0}(\xi_0,\theta_0) \leq \alpha,
\end{equation*}
which contradicts the definition of $\alpha$.
As a consequence, $r_0 = \infty$.  Since $V_{r,s} \to Q_s$ as $r$ tends to $\infty$, then we have that $\alpha Q_s \leq \mu$, as claimed.

We now prove (ii). By the Harnack inequality and Lemma \ref{lem:tails}, it is sufficient to prove that $\lim_{\xi \to \infty}\mu(\xi,\underline\theta) = 0$. Suppose that there exists $\delta>0$ and a sequence $\xi_n \to + \infty$ such that for all $n \in \N, \;\mu(\xi_n,\underline\theta)\geq \delta$. Adapting the proof of (i), we find $\alpha_{s,\delta}>0$ that for all $n \in \N$,
\begin{equation}\label{eq:last}
\mu(\xi,\underline\theta) \geq \alpha_{s,\delta} Q_s(\underline\theta), \qquad \forall (\xi,\theta) \in \left( -\infty , \xi_n \right] \times [\underline\theta,\underline\theta + s].
\end{equation}
Hence \eqref{eq:last} holds for all $\xi \in \R$.  This contradicts \Cref{lem:inf}, lowering $\underline\eps$ so that $\underline \eps < \omega$ if necessary.
\end{proof}

\begin{proposition}[The front speed is $c_b^*$]\label{prop:minspeed}
%
%
Any nonzero, nonnegative solution $(c,\mu)$ given by \Cref{lem:convslab} satisfying $\inf_{\xi\in\R} \mu(\xi,\underline\theta) = 0$ satisfies $c \geq c_b^*$.
\end{proposition}

\noindent Note that, due to \Cref{lem:upboundc}, Proposition~\ref{prop:limits}, and Proposition~\ref{prop:minspeed}, the solution given by \Cref{lem:convslab} travels with the speed $c=c_b^*$.

\begin{proof}

Assume that $c<c_b^*$.  By analogy with the Fisher-KPP equation, we use oscillating fronts to ``push'' solutions of \eqref{convslab2} up to the speed $c_b^*$. 

Fix $\epsilon_1 > 0$ to be determined.  By choosing $s = b - \epsilon_1/(1+\|\mu\|_\infty)$ and applying the Harnack inequality for any $\theta, \theta' \in [\underline\theta, \underline \theta+s]$, we have that $\mu(\xi,\theta') \leq C_s \mu(\xi,\theta)$.  Note that $C_s$ and $s$ depend only on $b$ and $\epsilon_1$.  Recall that $\|\mu\|_\infty$ is bounded above by \Cref{lem:nc}.  Hence, we have
\[
	\nu(\xi)
		= \int_{\underline\theta}^{\underline\theta + b} \mu(\xi,\theta')d\theta'
		\leq \int_{\underline\theta}^{\underline\theta + s} C_s \mu(\xi,\theta)d\theta' + \int_{\underline\theta+ s}^{\underline\theta + b} \|\mu\|_\infty d\theta'
		= C_s' \mu(\xi,\theta) + \epsilon_1,
\]
where $C_s' = s C_s$.
As a result, $\mu$ satisfies
\begin{equation*}
\forall \left( \xi , \theta \right) \in \R \times (\underline\theta,\underline\theta +s), \qquad - c \mu_{\xi}  - \theta \mu_{\xi\xi} - \mu_{\theta\theta} \geq \left(1 - m - C_s' \mu - \eps_1 \right)\mu.
\end{equation*}

We explain below how to construct a compactly supported sub-solution using a relevant spectral problem in the complex plane, see also \cite{BouinCalvez} for a related argument. 

Recall from~\eqref{eq:spectral_problem} that for any $\lambda, \eps \in \R^+$, $Q_{\lambda,s}$ solves the spectral problem
\begin{equation}\label{eq:spectral_problem_s}
\begin{cases} 
	Q_{\lambda,s}''+ \left( - \lambda c_{\lambda,s,\epsilon} + \theta \lambda^2   + \left( 1 - \epsilon_1 - \epsilon -  m \right) \right) Q_{\lambda,s} = 0\,, \qquad \theta \in (\underline\theta,\underline\theta + s), \\
	Q_{\lambda,s}'\left( \underline\theta \right) = 0, \quad Q_{\lambda,s}\left( \underline\theta + s\right) = 0, \quad Q_{\lambda,s} > 0,
\end{cases}
\end{equation}
with $c_{\lambda, s, \epsilon} = c_{\lambda, s} - (\epsilon_1 + \epsilon)/\lambda$. 
Let $c^*_{s,\epsilon}$ be the minimum, occurring at $\lambda_{s,\epsilon}^*$, of $c_{\lambda,s,\epsilon}$ over all $\lambda \in \R^+$. From the explicit expression of  $c_{\lambda,s,\epsilon}$, we obtain $c^*_{s,\epsilon} < c^*_s <c^*_b$. By fixing $\eps_1$ sufficiently small, we can ensure $c < c^*_{s,\epsilon=0} < c^*_b$. Thus, there exists $\epsilon_c > 0$ such that
\[
	c = c_{s,\epsilon_c}^* = c_{\lambda_{s,\epsilon_c}^*,s,\epsilon_c} = c_{\lambda_{s,\epsilon_c}^*,s} - \frac{\epsilon_1 +\epsilon_c}{\lambda_{s,\epsilon_c}^*}.
\]




Now consider~\eqref{eq:spectral_problem_s} for complex values of $\lambda$. Perturbation theory, see \cite[Chapter 7, \S 1, \S 2, \S 3]{Kato}, yields that the map $\lambda \mapsto c_{\lambda,s,\epsilon}$ is analytic in $\lambda$ at least in a neighborhood of the real axis.

Our aim is now to  find $\epsilon$ and $\lambda_c := \lambda_{c,R} + i \lambda_{c,I}$ (with $\lambda_{c,I} \neq 0$) such that $c_{\lambda_c,s,\epsilon} = c$.  We note that $\lambda_{c,I}\neq 0$ allows us to construct compactly supported sub-solutions; see below.  We argue using Rouché's theorem (around $\lambda_{s,\epsilon_c}^*$).

Define $f(\lambda) = c_{\lambda, s, \epsilon_c} - c$. From above, we have that $f(\lambda_{s,\epsilon_c}^*) = 0$. Since $\lambda \mapsto Q_{\lambda,s}$ is continuous, 
$\partial_\theta Q_{\lambda_{s,\epsilon_c}^*}(\underline\theta+s) < 0$ due to the Hopf lemma, and $Q_{\lambda_{s,\epsilon_c}^*} > 0$ in $(\underline\theta,\underline\theta+s)$, then $\Real(Q_{\lambda,s}) > 0$ for $\lambda$ sufficiently close to $\lambda_{s,\epsilon_c}^*$. 
Moreover, since the zeros of analytic functions are separated, for any sufficiently small $r \in (0,\lambda_{s,\epsilon_c}^*)$ there is $\delta>0$ such that $|f(\partial B_r(\lambda_{s,\epsilon_c}^*))| \geq \delta > 0$. Thus fix $r \in (0,\lambda_{s,\epsilon_c}^*)$ sufficiently small so that $|f(\partial B_r(\lambda_{s,\epsilon_c}^*))| \geq \delta > 0$ and $\Real(Q_{\lambda,s}) > 0$ for any $\lambda \in B_r(\lambda_{s,\epsilon_c}^*)$. 

Define $g(\lambda) =  c_{\lambda, s, \epsilon}  - c$. Fix $\eps < \eps_c$ close enough to $\eps_c$ such that $0 < \vert \eps_c - \eps\vert/(\lambda_{s,\epsilon_c}^* - r) < \delta$.  Then, on $\partial B_r(\lambda_{s,\epsilon_c}^*)$, we have that
\[
	|f(\lambda) - g(\lambda)|
		= \frac{\vert \eps_c - \eps \vert}{|\lambda|}
		\leq \frac{\vert \eps_c - \eps \vert}{\lambda_{s,\epsilon_c}^* - r}
		< \delta
		\leq |f(\lambda)|
		\leq |f(\lambda)| + |g(\lambda)|.
\]
Hence the hypotheses of Rouch\'e's theorem are met.  Thus, $f$ and $g$ have the same number of zeros in $B_r(\lambda_{s,\epsilon_c}^*)$.  Since $f(\lambda_{s,\epsilon_c}^*) = 0$ then $g$ has at least one zero, $\lambda_c$.  Using the definition of $g$, we have that $c_{\lambda_c,s,\eps} = c$. Moreover, since $\eps < \eps_c$, we have necessarily $c_{s,\eps}^* > c_{s,\eps_c}^* = c$ and thus $\lambda_c \not\in \R$.
%
%
%

We now let
\begin{equation*}
\psi(\xi,\theta):= \text{Re} \big( e^{- \lambda_c \xi} Q_{\lambda_c,s} \left( \theta \right)\big) = e^{-\lambda_{R} \xi}\left[ \text{Re} \left(Q_{\lambda_c,s}(\theta)\right) \cos (\lambda_{I} \xi) + \text{Im} \left(Q_{\lambda_c,s}(\theta)\right) \sin (\lambda_{I} \xi) \right].
\end{equation*} 
Notice that $\psi(0,\theta) > 0 > \psi(\pm \lambda_{c,I}^{-1} \pi, \theta)$ for all $\theta \in [\underline\theta, \underline\theta + s)$.
  Hence, by the continuity of $\psi$, there exists an open subdomain $\mathcal{D} \subset [-\lambda_{c,I}^{-1}\pi,\lambda_{c,I}^{-1}\pi]\times [\underline\theta, \underline\theta+s]$ such that $\psi>0$ on $\mathcal{D}$ and vanishes on $\partial \mathcal{D}$,
except possibly where $\mathcal{D}$ intersects $\lbrace \theta = \underline\theta \rbrace$ where $\psi$ satisfies Neumann boundary conditions.

By construction of $\psi$, we have
\begin{equation*}
- c \psi_\xi - \theta \psi_{\xi\xi} - \psi_{\theta \theta} - (1 - \eps_1 - m) \psi = - \eps \psi, \qquad \text{on } \mathcal{D}.
\end{equation*}
Thus, for all $\alpha \geq 0$, the function $v:= \mu - \alpha \psi$ satisfies
\begin{equation}\label{eq:chris}
- c v_\xi - \theta v_{\xi\xi} - v_{\theta \theta} - (1 - \eps_1 -m)v \geq \alpha \eps\psi - \left( C_s' \mu\right) \mu = \left( \eps - C_s' \mu\right) \mu - \eps v.
\end{equation}

Arguing as \Cref{lem:inf}, there exists $\alpha_0$ such that $v$ attains a zero minimum at $(\xi_0,\theta_0) \in \overline{\mathcal{D}}$. The minimum point is in the interior due to the boundary conditions. From \eqref{eq:chris} evaluated at $(\xi_0,\theta_0)$, we deduce $\mu(\xi_0,\theta_0) \geq \frac{\eps}{C_s'}$. Applying the Harnack inequality on $\mathcal{D}$, we conclude that $\mu(0,\underline\theta) \geq \frac{\eps}{C_s'}$ after possibly changing the constant $C_s'$. 

We emphasize that the renormalization $\mu(0,\underline\theta) = \underline\eps$, which is the only reason for which \eqref{eq:slab} is not invariant by translation in $\xi$, is not used here. Hence, we note that our argument did not depend on the spatial variable $\xi$.  As such, we can conclude that $\mu(\xi,\underline\theta) \geq \frac{\eps}{C_s'}$ for all $\xi$.  
We then obtain $\inf_{\xi \in \R} \mu(\xi,\underline\theta) \geq \frac{\eps}{C_s'}$. This contradicts the property $\inf_{\xi \in \R} \mu(\xi,\underline\theta)= 0$.
\end{proof}

\subsubsection*{Taking the limit $b\to\infty$}

Since we have uniform bounds on $\mu_b$, we may take locally uniform limits $\mu_b \to \mu_\infty$ as $b$ tends to infinity.  Since the speed associated with $\mu_b$ is $c_b^*$ and since $c_b^* \to c^*$ then $\mu_\infty$ satisfies equation~\eqref{eqkinwave}.  We need only check that the limit is non-trivial.  Proposition~\ref{prop:limits}.(i) gives that $\liminf_{\xi\to-\infty} \nu_\infty(\xi) > 0$.  On the other hand, we may argue exactly as in Proposition~\ref{prop:limits}.(ii) in order to show that $\limsup_{\xi\to\infty} \mu_\infty(\xi,\underline\theta) = 0$.  Hence $\mu_\infty$ is our traveling wave with speed $c^*$, finishing the proof of Proposition~\ref{prop:tw} in the case when $m$ is super-linear.

\subsection{Case two (the critical case): $\lim_{\theta\to\infty} m(\theta)/\theta = \kappa^2 > 0$}

In this section, we show the differences appearing in the critical case. We now assume that the trade-off function takes the following form
\begin{equation*}
m(\theta) = \kappa^2 \theta + \widetilde m(\theta), 
\end{equation*}
where $\kappa > 0$ and $\widetilde m(\theta)/\theta \to 0$.

We start by constructing the speeds of propagation for the travelling waves. This is where the assumption on $m$ plays the main role. For any spatial decay rate $\lambda > 0$, we re-introduce the spectral problem on $[\underline\theta, \underline\theta + b]$ for $b$ sufficiently large and possibly infinite:
\begin{equation*}
\begin{cases} 
Q_{\lambda,b}''+ \left( - \lambda c_{\lambda,b} + \left( \lambda^2 - \kappa^2 \right) \theta  + 1 - \widetilde m(\theta)  \right) Q_{\lambda,b} = 0\,, \qquad \theta \in (\underline\theta, \underline\theta + b), \\
 Q_{\lambda,b} ' \left( \underline\theta \right)= Q_{\lambda,b}(\underline\theta + b) = 0, \quad Q_{\lambda,b} > 0.
\end{cases}
\end{equation*}

We may observe that as long as either $\lambda < \kappa$ or $b < \infty$, the previous spectral problem has a unique solution $c_{\lambda,b}$ as in the previous sub-section. Moreover, one can prove again that $\lim_{\lambda \to 0} \lambda c_{\lambda,b} = \gamma_b^{\delta=0} \leq \lambda c_{\lambda,b}$.  However, the unique issue of this case is that when $\lambda > \kappa$ and $b = \infty$ this spectral problem does not have any solution since, denoting $c_\lambda = c_{\lambda,\infty}$,
\begin{equation*}
\lim_{\theta \to + \infty} \left( - \lambda c_\lambda + \theta \lambda^2   + \left( 1 - m(\theta) \right) \right) = + \infty.
\end{equation*}
We point out that, the borderline case $\lambda = \kappa$ need not have a solution but it does if $\widetilde m(\theta) \to \infty$.

Hence, the function $\lambda \mapsto c_\lambda$ has an infimum on $(0,\kappa)$ but may not have a minimum. We define the minimal speeds $c^*_b := \inf_{\lambda\in\R^+} c_{\lambda,b}$, if $b < \infty$, and $c^* := \inf_{\lambda \in (0,\kappa)} c_\lambda$ otherwise. Note that
\begin{equation*}
c_{b}^* = \inf_{\lambda \in \R^+} c_{\lambda,b} \leq \inf_{\lambda \in (0,\mu)} c_{\lambda,b} < \inf_{\lambda \in (0,\mu)} c_{\lambda} = c^*. 
\end{equation*}
Moreover $\lim_{b \to +\infty} c_{b}^* = c^*$, since $\lim_{b \to +\infty} c_{\lambda,b} = + \infty$ when $\lambda > \kappa$. As a consequence, the entire proof of case one can be reproduced in this case to prove \Cref{prop:tw}.

\section{Proof of \Cref{thm:cauchy_finite}: linear spreading for the Cauchy problem}\label{sec:cauchy_finite}

We first prove the following lower bound on the propagation speed of any initial data.

\begin{lem}\label{lem:finite_speed_lower_bd}
	Under the assumptions of \Cref{thm:cauchy_finite}, there exists $\underline n>0$ such that if $c < c^*$ then 
	\[
		\liminf_{t\to\infty} \inf_{|x| < ct} n(t,x,\underline\theta) \geq \underline n.
	\]
\end{lem}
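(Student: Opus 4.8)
The plan is to combine two sub-solution constructions in the spirit of the lower bound of \Cref{thm:acceleration}, but without any moving-height gymnastics: a \emph{stationary} sub-solution near the origin, yielding local persistence, and a family of \emph{moving} sub-solutions travelling at a fixed speed $\bar c\in(c,c^*)$ that carry this persistence out to distance $\bar c t$. As in Section~\ref{sec:acceleration}, the nonlocal term will be dealt with through \Cref{prop:uniform_upper_bound} and \Cref{lem:harnack}: on a region of bounded traits $\rho$ is controlled by $Cn+(\text{small})$, and any point at which a sufficiently small sub-solution could first touch $n$ from below is a point where $n$ --- hence $\rho$ --- is small. First, since $n(1,\cdot,\cdot)>0$ everywhere by the strong maximum principle, after a time shift we may assume $n_0>0$; in particular $n(1,\cdot)\geq\delta_0>0$ on any prescribed compact box $\mathcal K_0=[-K_0,K_0]\times[\underline\theta,\underline\theta+K_0]$, where $K_0$ is fixed large below (depending on $\bar c$).

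\emph{Step 1 (local persistence).} Choose $r_0,s_0$ so large that $\mathcal K_0$ lies well inside $(-r_0,r_0)\times[\underline\theta,\underline\theta+s_0)$ and that the principal Dirichlet eigenvalue $\gamma_{r_0,s_0}$ of $\theta\varphi_{\xi\xi}+\varphi_{\theta\theta}+(1-m)\varphi$ on that rectangle --- with Neumann condition at $\theta=\underline\theta$ --- satisfies $\gamma_{r_0,s_0}>\gamma_\infty/2>0$; recall from the proof of \Cref{lem:bottom} that $\gamma_{r,s}\to\gamma_\infty$ and that the eigenfunction $\varphi$ factors explicitly. Using \Cref{lem:harnack} to bound $\rho\leq C_{r_0,s_0}n+\gamma_\infty/4$ on the rectangle, one checks that $\underline w:=\kappa\varphi$ --- with $\kappa$ small enough that $C_{r_0,s_0}\kappa\|\varphi\|_\infty\leq\gamma_\infty/4$ and $\kappa\|\varphi\|_\infty\leq\delta_0$ --- is a sub-solution of the local problem, strict thanks to the gap $\gamma_{r_0,s_0}-\gamma_\infty/2>0$, lying below $n(1,\cdot)$. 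Hence $n\geq\underline w$ on $[1,\infty)\times(-r_0,r_0)\times[\underline\theta,\underline\theta+s_0)$, so, setting $\underline n_1:=\kappa\min_{\mathcal K_0}\varphi>0$, one has $n(t,\cdot)\geq\underline n_1$ on $\mathcal K_0$ for all $t\geq 1$.

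\emph{Step 2 (propagation by time-shifted moving bumps).} Fix $\bar c\in(c,c^*)$. Since $c^*_{s,\epsilon}\to c^*$ as $s\to\infty$ and then $\epsilon\to0$, fix $s$ and $\epsilon>0$ small with $\bar c<c^*_{s,\epsilon}$, and use the Rouché argument from the proof of \Cref{prop:minspeed} (or \Cref{prop:minspeedcrit} in the critical case) to produce a complex decay rate $\lambda_{\bar c}=\lambda_R+i\lambda_I$ with $c_{\lambda_{\bar c},s,\epsilon}=\bar c$ and $\Real Q_{\lambda_{\bar c},s}>0$ on $[\underline\theta,\underline\theta+s]$. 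Put $\psi(\xi,\theta):=\Real\big(e^{-\lambda_{\bar c}\xi}Q_{\lambda_{\bar c},s}(\theta)\big)$ and let $\mathcal D$ be the component of $\{\psi>0\}$ containing $(0,\underline\theta)$; as in \Cref{prop:minspeed}, $\overline{\mathcal D}$ is bounded, contained in $[-\pi/\lambda_I,\pi/\lambda_I]\times[\underline\theta,\underline\theta+s]$, $\psi$ obeys the Neumann condition on $\partial\mathcal D\cap\{\theta=\underline\theta\}$ and vanishes on the rest of $\partial\mathcal D$, and $(\tau,x',\theta)\mapsto\psi\big(x'-\bar c(\tau-t_0),\theta\big)$ is a strict sub-solution, with slack $-2\epsilon\psi$, of $u_t=\theta u_{xx}+u_{\theta\theta}+(1-m)u$ on the corresponding moving copy of $\mathcal D$. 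We now fix $K_0$ large enough that $\mathcal D\subset\mathcal K_0$ and $K_0\geq s$. Let $t\geq(1-c/\bar c)^{-1}$ and $x\in[0,ct]$, and set $t_0:=t-x/\bar c\in[1,t]$. By Step~1, $n(t_0,\cdot)\geq\underline n_1$ on $\mathcal K_0\supset\overline{\mathcal D}$, so the function $\underline n:=\delta\,\psi\,\mathbf 1_{\mathcal D}\big(\,\cdot-\bar c(\,\cdot-t_0),\,\cdot\,\big)$ on $[t_0,t]$, with $\delta:=\min(\underline n_1,\epsilon_0)/\|\psi^+\|_{L^\infty(\mathcal K_0)}$ for a small $\epsilon_0$ fixed below, satisfies $\underline n(t_0,\cdot)\leq n(t_0,\cdot)$, vanishes on the moving boundary, and obeys $\underline n\leq\epsilon_0$. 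If $\underline n$ first touched $n$ from below at $(\tau_*,x_*,\theta_*)$, this point would be interior to the moving $\mathcal D$, where $n=\underline n\leq\epsilon_0$ and $\theta_*\in[\underline\theta,\underline\theta+K_0)$; then \Cref{lem:harnack} (with parameter $\epsilon/2$ and radius $R=K_0$) gives $\rho(\tau_*,x_*)\leq C_{K_0}\epsilon_0+\epsilon/2<2\epsilon$ provided $\epsilon_0\leq\epsilon/(2C_{K_0})$, contradicting the strict sub-solution inequality there. Hence $\underline n\leq n$ on $[t_0,t]$, and evaluating at $\tau=t$, $x'=x$, $\theta=\underline\theta$ yields $n(t,x,\underline\theta)\geq\delta\,\psi(0,\underline\theta)=\delta\,\Real Q_{\lambda_{\bar c},s}(\underline\theta)=:\underline n>0$, a bound independent of $t$ and of $x\in[0,ct]$. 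The range $x\in(-ct,0)$ is treated identically with bumps moving at speed $-\bar c$ --- which also absorbs the left-sidedness of~\eqref{eq:n_0}, since each target position is reached starting from a fixed neighbourhood of the origin. Letting $t\to\infty$ finishes the proof.

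\emph{Main difficulty.} The heart of the argument is the nonlocality: the moving sub-solution must be kept small enough that \Cref{lem:harnack} forces $\rho$ below the spectral slack $2\epsilon$ at every potential touching point, and the long chain of choices $c\mapsto\bar c\mapsto(\epsilon,s)\mapsto\lambda_{\bar c}\mapsto K_0\mapsto(r_0,s_0)\mapsto(\kappa,\underline n_1)\mapsto(\epsilon_0,\delta)$ must be kept consistent. A second, structural point is that for $\bar c<c^*$ the relevant plane-wave sub-solution must be built from a \emph{complex} decay rate $\lambda_{\bar c}$ --- real rates give only speeds $\geq c^*$ --- hence has bounded support in $(\xi,\theta)$; a single moving bump therefore covers only a bounded $x$-window, and the conclusion for all $|x|<ct$ must be obtained by dispatching, for each target position, an appropriately time-shifted copy issuing from the persistence box of Step~1.
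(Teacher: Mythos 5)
Your argument is sound in outline but follows a genuinely different route from the paper. The paper's proof recycles the nonlinear construction of \Cref{sec:tw}: it takes the slab travelling wave $\mu$, built with a reduced growth rate $\alpha<1$ so that its speed $c_{a,b}$ lies in $(c,c^*]$, and uses $A^{-1}\mu(x-c_{a,b}t,\theta)$ with $A$ large as the moving sub-solution. The nonlocal term is handled by the same mechanism you use --- \Cref{lem:harnack} gives $n_t\geq \theta n_{xx}+n_{\theta\theta}+n(1-\epsilon_b-m-C_b n)$, while the elliptic Harnack inequality for $\mu$ gives $\nu\geq C_b^{-1}\mu$ --- so that both functions are ordered by the comparison principle for a common local KPP-type equation; a final ``wait'' in the spirit of Step 3bis then removes the dependence on $b$. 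You instead launch compactly supported bumps built from the complex-decay-rate plane waves $\Real\bigl(e^{-\lambda_{\bar c}\xi}Q_{\lambda_{\bar c},s}\bigr)$ of the proof of \Cref{prop:minspeed} --- an object the paper only uses, in elliptic form, to identify the wave speed --- issued from a persistence box at the origin, with time shifts chosen so that each target $|x|\leq ct$ is reached exactly at time $t$. This buys two things the paper leaves implicit: the comparison takes place on moving domains on whose lateral boundary the sub-solution vanishes (no issue with the slab's left boundary datum $Q_b^\delta$), and the uniformity over the whole range $|x|<ct$, rather than only near the ray $x\approx c_{a,b}t$, is explicit.

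There is, however, one mismatch with the statement as written (and as needed for \Cref{thm:cauchy_finite}): your constant degenerates as $c\uparrow c^*$. The lemma asserts a single $\underline n>0$ valid for every $c<c^*$, whereas your $\underline n=\delta\,\Real Q_{\lambda_{\bar c},s}(\underline\theta)$ is built from the spectral slack $\epsilon$ and the bump width $\pi/\lambda_I$, both of which degenerate as $\bar c\to c^*$ (the complex root collapses onto the real double root), so every constant in your chain, down to $\epsilon_0$ and $\delta$, depends on $c$. The repair is exactly the paper's closing appeal to Step 3bis: deliver the small ($c$-dependent but $t$-independent) mass at the target point a fixed time $T_w(c)$ before $t$, then regrow it there with the time-increasing sub-solution $\kappa e^{\lambda'(\cdot)}\varphi$ capped by the nonlocal saturation; this raises the bound to a level determined only by $\gamma_{r_0,s_0}$ and the Harnack constants, hence independent of $c$. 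Your Step~1 is the stationary version of this and produces no growth by itself. Two smaller points: ``fix $s$ and $\epsilon>0$ small with $\bar c<c^*_{s,\epsilon}$'' followed by ``produce $\lambda_{\bar c}$ with $c_{\lambda_{\bar c},s,\epsilon}=\bar c$'' compresses the Rouch\'e step --- $\epsilon$ must be obtained by perturbing around the critical value $\epsilon_c$ with $\bar c=c^*_{s,\epsilon_c}$, not chosen arbitrarily; and a possible touching on $\{\theta=\underline\theta\}$ should be excluded by the Hopf lemma, as elsewhere in the paper. Finally, the lower bound $\delta_0$ on $n(1,\cdot)$ must be taken on the full eigenfunction rectangle rather than only on $\mathcal{K}_0$ --- trivial, since $n(1,\cdot)$ is positive and continuous on compact sets.
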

\begin{proof}
Fix $c < c^*$, fix large constants $a,b>0$, and fix $\alpha< 1$ to be determined.  Let $\mu$ be the solution to the slab problem on $(-a,a)\times(\underline\theta,\underline\theta+b)$ given in \Cref{sec:slab} solving
\[
	- c_{a,b} \mu_x - \theta \mu_{\xi\xi} - \mu_{\theta\theta} = \mu(\alpha - m - \nu),
\]
with the boundary conditions used above.  We point out that while \Cref{sec:slab} only proves the existence of $\mu$ for $\alpha = 1$, the general case may obtained similarly.  From the work above, it follows that we may choose $a$ and $b$ sufficiently large and $1-\alpha$ sufficiently small that its speed $c_{a,b} \in (c,c^*]$.  Set $\underline n (t,x,\theta) = A^{-1}\mu(x - c_{a,b}t , \theta)$, where $A$ is a positive constant to be determined.

With this definition, using the elliptic Harnack inequality for $\mu$, we have that $\underline n$ satisfies
\[
	\underline n_t \leq \theta \underline n_{xx} + \underline n_{\theta\theta} + \underline n (\alpha - m - A C_b^{-1} \underline n).
\]
On the other hand, arguing as in the proof of \Cref{thm:acceleration}, we use \Cref{lem:harnack} along with the decay of $n$ to obtain that, for $t\geq 1$,
\[
	n_t \geq \theta n_{xx} + n_{\theta\theta} + n(1 -\epsilon_b - m - C_b n),
\]
where $\epsilon_b$ is a parameter which tends to zero as $b$ tends to $\infty$.  We take $b$ sufficiently large so that $1-\epsilon_b \geq \alpha$.  Choosing $A \geq C_b^2$, we see that $\underline n$ is a sub-solution to $n$ for all $t \geq 1$.  Since $n(1,x,\theta) > 0$ for all $(x,\theta)$, we may choose $A$ sufficiently large that $\underline n(0,x,\theta) \leq n(1,x,\theta)$.  As a result, the maximum principle implies that $\underline n(t,x,\theta) \leq n(t+1,x,\theta)$ for all $t\geq 0$ and all $(x,\theta)$.  Recalling the definition of $\underline n$, we have that, for all $(x,\theta) \in (-a,a)\times(\underline\theta,\underline\theta+b)$,
\[
	A^{-1} \mu(x, \theta) \leq n(t+1, x + c_{a,b}t, \theta).
\]

Arguing as in Step \#3bis in \Cref{thm:acceleration}, we may ``wait'' to remove the dependence on $n(1,\cdot,\cdot)$.  Namely, we may find a constant $\mu_0>0$, independent of $n(1,\cdot,\cdot)$, such that $\mu_0 \leq n(t + t_0, x + c_{a,b} t, \theta)$ for all $(x,\theta) \in (-1,1)\times (\underline\theta, \underline \theta+1)$and $t$ sufficiently large. Evaluating at $\theta = \underline\theta$ finishes the claim.\end{proof}

In addition, we obtain a matching upper bound.

\begin{lem}\label{lem:finite_speed_upper_bd}
	Under the assumptions of \Cref{thm:cauchy_finite}, if $c>c^*$ then
	\[
		\lim_{t\to\infty} \sup_{x > ct,~\theta\in\Theta} n(t,x,\theta) = 0.
	\]
\end{lem}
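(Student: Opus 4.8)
The plan is to use the same strategy outlined in the introduction: since solutions to~\eqref{eq:main} are sub-solutions to the linearized equation~\eqref{eq:linearized}, it suffices to bound $\overline n$ from above, and for this I will construct a super-solution built from a travelling wave of a related \emph{local} problem. The obstruction to directly using the travelling wave $\mu$ from \Cref{prop:tw} is that $\mu$ is a solution of the nonlocal problem, not a super-solution of the linearization; moreover, the travelling wave decays like $e^{-\lambda^* \xi}Q^*(\theta)$ only in the bulk, and I need a global super-solution on all of $\R\times\Theta$ that dominates the initial data $n_0$ (which is compactly supported in $\theta$ but has the indicator profile of~\eqref{eq:n_0} in $x$).

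First I would fix $c' \in (c^*, c)$ and recall that for the decay rate $\lambda^*$ realizing $c^* = c_{\lambda^*}$, the function $e^{-\lambda^*(x - c^* t)} Q^*(\theta)$ solves the linearized equation exactly (this is precisely how $c_\lambda$ was defined in~\eqref{eq:specQlambda}). Hence $\overline w(t,x,\theta) := C e^{-\lambda^*(x - c^* t)} Q^*(\theta)$ is a super-solution (in fact a solution) of~\eqref{eq:linearized} for any $C > 0$. The issue is that this is \emph{not} bounded as $x \to -\infty$, so it cannot directly dominate $n_0$ or $\overline n$ globally; but for the upper bound on $\{x > ct\}$ we only need domination for large $x$. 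The standard fix is to cap it: set
\[
	\overline w(t,x,\theta) = \min\Big\{ M,\ C e^{-\lambda^*(x - c^* t)} Q^*(\theta)\Big\},
\]
where $M$ is the uniform bound from \Cref{prop:uniform_upper_bound}; one checks that a minimum of two super-solutions of a linear parabolic equation (with the constant $M$ being a super-solution since $1 - m \le 1$ is not quite enough — here one instead uses that $M$ dominates $n$ by \Cref{prop:uniform_upper_bound} directly, or uses $\overline{n} \le M$ as an a~priori fact) remains a valid comparison function for the region where the exponential is the smaller of the two. Then, choosing $C$ large enough (using the explicit form of $n_0$ in~\eqref{eq:n_0}), we get $\overline n(0,\cdot) = n_0 \le \overline w(0,\cdot)$, and the comparison principle for~\eqref{eq:linearized} gives $n \le \overline n \le \overline w$ for all time. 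Finally, on $\{x \ge ct\}$ with $c > c^*$, we have $x - c^* t \ge (c - c^*) t$, so $C e^{-\lambda^*(x-c^*t)} Q^*(\theta) \le C \|Q^*\|_\infty e^{-\lambda^*(c-c^*)t} \to 0$ uniformly in $\theta$ as $t \to \infty$, which yields the claim.

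The subtle point — and the step I expect to require the most care — is verifying that the capped function $\min\{M, C e^{-\lambda^*(x-c^*t)}Q^*(\theta)\}$ is genuinely usable as a super-solution across the interface where the two expressions are equal, and that the Neumann boundary condition at $\theta = \underline\theta$ is respected (it is, since $(Q^*)'(\underline\theta) = 0$ and the constant $M$ trivially satisfies Neumann). One clean way to sidestep the interface issue entirely: since $n \le M$ always holds by \Cref{prop:uniform_upper_bound}, and separately $n \le \overline n \le C e^{-\lambda^*(x-c^*t)} Q^*(\theta)$ holds \emph{wherever the latter exceeds} — no, that is circular. The honest approach is to note that $\overline n \le M$ need not hold a~priori (only $n \le M$ does), so one should instead run the comparison on the linearized equation using as initial datum $\overline n(0,\cdot) = n_0$, which \emph{is} compactly supported in $\theta$ and has the bound~\eqref{eq:n_0}; then $C e^{-\lambda^*(x-c^*t)}Q^*(\theta)$ alone (uncapped) dominates $n_0$ for $C$ large, and by the comparison principle it dominates $\overline n$ everywhere, hence dominates $n$ everywhere. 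No cap is needed at all, since we never claimed $\overline n$ is bounded — we only need the upper bound $n \le \overline n \le$ (exponential), and the exponential tends to $0$ on $\{x \ge ct\}$. This is the cleanest route and avoids the interface difficulty altogether; I would write the proof this way.
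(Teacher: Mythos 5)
Your final (``cleanest route'') argument is sound, but it is genuinely different from the paper's proof. The paper does not compare with a pure exponential solution of the linearization: it takes $\alpha>1$, builds---by rerunning the slab construction of \Cref{sec:tw} with growth rate $\alpha$---a travelling wave profile of the modified nonlocal problem with speed $c^*_\alpha\in[c^*,c)$, sets $\overline n(t,x,\theta)=A\,\mu(x-c^*_\alpha t,\theta)$ with $A$ large, and then uses the local-in-time Harnack inequality together with the trait-tail decay to show that $\overline n$ is a super-solution of the linear problem of which $n$ is a sub-solution. Your route replaces all of this by the textbook pulled-front comparison with $C e^{-\lambda(x-c_\lambda t)}Q_\lambda(\theta)$, which solves \eqref{eq:linearized} exactly (with the Neumann condition) by the very definition of $c_\lambda$ in \eqref{eq:specQlambda}; this buys you a shorter argument that avoids both the $\alpha$-modified wave and the Harnack step. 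What it costs is that your comparison function is unbounded as $x\to-\infty$, so the comparison principle on $\R\times\Theta$ (unbounded diffusivity $\theta$, unbounded $m$) must be justified for differences that are merely bounded below, by a Phragm\'en--Lindel\"of type argument; since the paper itself invokes $n\le\overline n$ for the linearized problem without comment in the proof of \Cref{prop:upperboundacc}, this is at the paper's level of rigor. You were also right to discard the ``capped'' $\min\{M,\cdot\}$ construction---no cap is needed.

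One genuine, though easily repaired, gap: you invoke ``the decay rate $\lambda^*$ realizing $c^*=c_{\lambda^*}$'' and the eigenfunction $Q^*$. Under the hypotheses of \Cref{thm:cauchy_finite} (through \Cref{prop:tw}) one only assumes $\lim_{\theta\to\infty}m(\theta)/\theta>0$, and in the critical case where this limit is finite the spectral problem \eqref{eq:specQlambda} on the unbounded trait domain is solvable only for decay rates below the critical one, and $c^*$ is defined as an infimum that the paper explicitly warns may not be attained. So $\lambda^*$ and $Q^*$ may simply not exist, and the proof as literally written fails in that case, which is covered by the theorem. The fix is the one your own opening sentence hints at: since $c>c^*=\inf_\lambda c_\lambda$, choose $\lambda_0$ (in the admissible range) with $c_{\lambda_0}\in(c^*,c)$; then $Ce^{-\lambda_0(x-c_{\lambda_0}t)}Q_{\lambda_0}(\theta)$ is an exact solution of \eqref{eq:linearized}, dominates $n_0$ for $C$ large because $n_0$ is supported in $\{x\le C_0,\ \theta\le\underline\theta+C_0\}$ where $e^{-\lambda_0 x}Q_{\lambda_0}$ is bounded below, and on $\{x\ge ct\}$ it is bounded by $C\|Q_{\lambda_0}\|_\infty e^{-\lambda_0(c-c_{\lambda_0})t}\to0$. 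This near-minimizer device is exactly what the paper itself uses in the critical case in \Cref{lem:upboundccrit}; with that substitution your proof goes through in both cases.
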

\begin{proof}
	Fix $c > c^*$ and $c_0 \in (c^*,c)$.  Then there exists $\lambda_0>0$ such that $c_{\lambda_0} = c_0$.  Let $Q_{\lambda_0}$ solve~\eqref{eq:specQlambda} with $c_0$ and $\lambda_0$ as above.  Let $\overline n(t,x,\theta) = C_0 e^{\lambda_0 C_0 - \lambda_0(x-c_0 t)	} Q_{\lambda_0}(\theta)$, where $C_0$ is the constant in~\eqref{eq:n_0}.  Then, by construction, $n_0 \leq \overline n(0,\cdot,\cdot)$ and $\overline n$ satisfies
	\[
		\overline n_t = \theta \overline n_{xx} + \overline n_{\theta\theta} + \overline n(1-m).
	\]
	The maximum principle implies that $n \leq \overline n$. Thus,
	\[
		\lim_{t\to\infty}\sup_{x\geq ct, \theta\in\Theta} n(t,x,\theta)
			\leq \lim_{t\to\infty}\sup_{x\geq ct, \theta\in\Theta} n(t,x,\theta)
			= \lim_{t\to\infty} C_0 e^{\lambda_0 C_0 - \lambda_0(c-c_0)t} \|Q_{\lambda_0}\|_\infty
			= 0,
	\]
	as desired.  This concludes the proof.

\end{proof}

The combination of Lemma~\ref{lem:finite_speed_lower_bd} and Lemma~\ref{lem:finite_speed_upper_bd} yields Theorem~\ref{thm:cauchy_finite}.

\section{Proof of Proposition~\ref{prop:extinction}: Extinction of $n$ when $\gamma_\infty \leq 0$}

\begin{proof}[{\bf Proof of  Proposition~\ref{prop:extinction}}]
Recall that the eigenvector $Q$ is a natural super-solution for $n$. Indeed, let $\overline n(t,x,\theta) = A Q(\theta)e^{\gamma_\infty t}$, where the constant $A$ is chosen such that $n_0 \leq \overline n(0,\cdot,\cdot)$. Then 
\begin{equation}\label{eq:lintoads}
\begin{cases}
	\overline n_t = \theta \overline n_{xx} + \overline n_{\theta\theta} + \left( 1 - m \right)  \overline n,\\
	\overline n_\theta(\cdot,\underline\theta) = 0.
	\end{cases}
\end{equation}
By the comparison principle, since $n$ is a sub-solution to  \eqref{eq:lintoads}, we have $n \leq \overline n$, and the conclusion of the proposition follows from the negativity of $\gamma_\infty$ when $\gamma_\infty<0$.

When $\gamma_\infty$ is zero, we argue as follows.  Define $v = n/Q$ and notice that $v$ satisfies
\[
	v_t = \theta v_{xx} + \frac{1}{Q^2} (Q^2 v_\theta)_\theta - v\rho.
\]
Multiplying by $Q^2 v$ and integrating the equation above, we have that
\begin{equation}\label{eq:energy}
	\frac{1}{2}\frac{d}{dt}\int Q^2 v^2\, dxd\theta = -\int Q^2\left( \theta |v_x|^2 + |v_\theta|^2\right)\, dx d\theta - \int Q^2 v^2 \rho\, dx d\theta.
\end{equation}
It is enough to show that $v$ tends uniformly to zero for bounded $\theta$.  If not, then there are positive constants $\epsilon$ and $H$ and a sequence of times $t_n\to \infty$, places $x_n$, and traits $\theta_n\in(\underline \theta, H)$ such that
\[
	v(t_n,x_n,\theta_n) \geq 2\epsilon.
\]
Using parabolic regularity along with the uniform bound on $n$, Proposition~\ref{prop:uniform_upper_bound}, we can find a $\gamma$, depending only on $u_0$, $H$, and $m$, such that $v \geq \epsilon$ holds on $[t_n, t_n + \gamma]\times [x_n- \gamma, x_n + \gamma]\times [\theta_n - \gamma, \theta_n + \gamma].$ 
We point out that this clearly implies that $\rho(t,x) \geq 2\gamma \epsilon$ for $(t,x) \in [t_n, t_n + \gamma]\times[x_n - \gamma, x_n + \gamma]$.  The combination of these two facts gives us that
\[
	\int_{t_n}^{t_n+\gamma}\int Q^2 v^2 \rho\, dx d\theta ds
		\geq C_{H,\gamma,m} \gamma^2 \epsilon^3.
\]
Since this inequality is true for all $t\in[t_n,t_n + \gamma]$ for every $n$, we may conclude that
\[
	\lim_{t\to\infty} \int_0^t Q^2 v^2 \rho\, dxd\theta ds
		\geq \sum_{n=1}^\infty \int_{t_n}^{t_n+\gamma}\int Q^2 v^2 \rho\, dx d\theta ds = \infty.
\]
%
%
Integrating~\eqref{eq:energy} in time and using the inequality above, we have that
\[
\lim_{t\to\infty}\int Q^2 v^2\, dxd\theta
		\leq \int Q^2 v_0^2\, dxd\theta
		- \lim_{t\to\infty}\int_0^t \int Q^2 v^2 \rho\, dxd\theta ds
		= -\infty.
\]
This is clearly a contradiction since the left hand side is non negative.
\end{proof}

A natural way to obtain a more precise estimate of the rates of decay when $\gamma_\infty = 0$ is to derive a dedicated Nash-type inequality, as in the case for the heat equation.  
This is out of the scope of the present paper, we do not go further in that direction.  
 
\section{A priori bounds for the Cauchy problem and a Harnack inequality}\label{sec:apriori}

\subsection{The uniform bound on $n$}

In this section, we prove a uniform upper bound of $n$ in $L^\infty$.  
Before beginning the technical work, we set some notation that is necessary in the sequel.  First, we define the parabolic cylinder 
\[
	\mathcal{C}_R(z_0) \stackrel{\rm def}{:=} (t_0 - R^2,t_0)\times \left\{ (x,\theta): \vert x - x_0 \vert^2 \leq R^2, \vert \theta - \theta_0 \vert^2 \leq R^2  \right\},
\]
where $z_0 = (t_0,x_0,\theta_0)$ and $R$ is any positive constant.  
In general, we may simply refer to $\mathcal{C}_R(z_0)$ as $\mathcal{C}_R$ when no confusion arises.  
Fixing $\delta > 0$, we recall, on this cylinder, the norms
\[\begin{split}
	& [n]_{\delta/2,\delta, \mathcal{C}_R} = \sup_{(t,x,\theta) \neq (s,y,\eta) \in \mathcal{C}_R} \frac{|n(t,x,\theta) - n(s,y,\eta)|}{(|x-y| + |\theta - \eta| + |t-s|^{1/2})^\delta},~~~~~~\text{ and }\\
	& [n]_{1+\delta/2,2+\delta, \mathcal{C}_R} = [n_t]_{\delta/2,\delta, \mathcal{C}_R} + \sum_{k+\ell = 2} [\partial_x^k\partial_\theta^\ell n]_{\delta/2,\delta,\mathcal{C}_R}.\\
\end{split}\]
We also define the parabolic Sobolev spaces: for any $p \in [1,\infty]$ and $\Omega \subset \R^+ \times \R\times \Theta$, let
\[\begin{split}
	W^{1,2}_p(\Omega) \stackrel{\rm def}{=}
		\left\{ f: \Omega \to \R: \max_{2j + k + \ell \leq 2} \int_{\Omega} |\partial_t^j \partial_x^k \partial_\theta^\ell f|^p dtdxd\theta < \infty \right\}.
\end{split}\]
We endow these with the obvious norm.

Our starting point for obtaining an $L^\infty$ bound on $n$ is the following bound on the tails of the solution, which is very similar to \Cref{lem:tails} for the travelling waves.
\begin{lemma}\label{lem:tailsevol}
Denote, for any $T>0$, $M_T = \sup_{t\in[0,T]} \|n(t,\cdot,\cdot)\|_{L^\infty\left(\R\times\Theta\right)}$. For any $\delta > 0$, there exists $C_\delta$, depending only on $\delta$ such that $n(t,x,\theta) \leq C_\delta M_T Q_\infty^\delta$.  In addition, $\rho(t,x) \leq CM_T$.
\end{lemma}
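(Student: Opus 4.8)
The plan is to mimic the proof of \Cref{lem:tails} for travelling waves, replacing the elliptic super-solution by a parabolic one built from the eigenfunction $Q_\infty^\delta$ of~\eqref{eq:eigenpb} (with $b=\infty$). First I would fix $\delta>0$ small, set $\gamma_\infty^\delta$ and $Q_\infty^\delta$ to be the corresponding principal eigenpair, and define the threshold $\theta_\delta := m^{-1}(\delta^{-1}\gamma_\infty^\delta)$, which exists by \Cref{hyp:m}. The idea is that for $\theta \geq \theta_\delta$ the mortality term $m$ dominates $\delta^{-1}\gamma_\infty^\delta$, so on this region a multiple of $Q_\infty^\delta$ will be a super-solution of the linearized equation~\eqref{eq:linearized}.

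Next, define the candidate super-solution
\[
	\overline{w}(t,x,\theta) = C M_T \, Q_\infty^\delta(\theta),
\]
for a constant $C$ chosen large enough (depending only on $m$ through $\min_{[\underline\theta,\theta_\delta]}Q_\infty^\delta$ and the normalization of $Q_\infty^\delta$) so that $\overline w \geq n$ holds: (i) at the initial time $t=0$ on all of $\R\times\Theta$, using $n(0,\cdot) \leq M_T \leq CM_T\,Q_\infty^\delta/\min_{[\underline\theta,\theta_\delta]}Q_\infty^\delta$ on $\{\theta \le \theta_\delta\}$ and the decay/positivity of $Q_\infty^\delta$; and (ii) on the lateral boundary $[0,T]\times\R\times\{\theta \leq \theta_\delta\}$, using the definition of $M_T$ so that $n(t,x,\theta) \le M_T \le \overline w(t,x,\theta)$ there. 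On the remaining region $[0,T]\times\R\times(\theta_\delta,\infty)$ one checks directly that $\overline w$ is a super-solution: since $n \le \overline n$ (the linearized solution dominates $n$ by the comparison principle, as $n$ is a sub-solution of~\eqref{eq:linearized}), it suffices that $\overline w$ be a super-solution of~\eqref{eq:linearized}, and
\[
	\overline w_t - \theta \overline w_{xx} - \overline w_{\theta\theta} - (1-m)\overline w
		= CM_T\bigl(-\gamma_\infty^\delta + \delta m\bigr)Q_\infty^\delta \geq 0
\]
exactly when $\theta \geq \theta_\delta$, by the definition of $\theta_\delta$ and monotonicity of $m$. The Neumann condition at $\theta=\underline\theta$ is shared by $\overline w$ and $n$, and is irrelevant since we compare on $\{\theta > \theta_\delta\}$ where the ``boundary'' is the line $\{\theta = \theta_\delta\}$ already handled in (ii).

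Then the parabolic comparison principle on the region $[0,T]\times\R\times(\theta_\delta,\infty)$ — valid because it applies to the local linear equation~\eqref{eq:linearized}, not to the nonlocal~\eqref{eq:main} — gives $n \leq \overline n \leq \overline w$ everywhere, i.e. $n(t,x,\theta) \leq CM_T\,Q_\infty^\delta(\theta)$ for all $(t,x,\theta)\in[0,T]\times\R\times\Theta$. Finally, integrating this bound in $\theta$ yields
\[
	\rho(t,x) = \int_{\underline\theta}^\infty n(t,x,\theta)\,d\theta \leq CM_T \int_{\underline\theta}^\infty Q_\infty^\delta(\theta)\,d\theta \le C'M_T,
\]
since $Q_\infty^\delta$ is integrable — this follows from its asymptotics, obtained exactly as in~\eqref{eq:Q_asymptotics} with $m$ replaced by $(1-\delta)m$, so that $Q_\infty^\delta$ decays like $\exp(-(1-\delta)^{1/2}\Phi(\theta))$ and $\Phi(\theta)\to\infty$. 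The main obstacle is a minor bookkeeping one: one must verify that the constant $C$ can be chosen depending only on $m$ (and $\delta$, which is itself fixed in terms of $m$) and not on $T$ — this is where it is essential that the comparison is done on the fixed region $\{\theta > \theta_\delta\}$ with $\theta_\delta$ independent of $T$, and that the lateral boundary bound uses $M_T$ linearly, so the final estimate stays of the form (const)$\cdot M_T$, which is what is needed to close the bootstrap in \Cref{prop:uniform_upper_bound}.
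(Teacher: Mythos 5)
Your proposal is essentially the paper's own argument: the same super-solution $C M_T Q_\infty^\delta$ (the paper writes it as $\max\bigl(1,M_T/\min_{[\underline\theta,\theta_\delta]}Q_\infty^\delta\bigr)Q_\infty^\delta$), the same threshold $\theta_\delta = m^{-1}(\delta^{-1}\gamma_\infty^\delta)$, the same comparison with the linearized equation on $\{\theta\geq\theta_\delta\}$, and integration in $\theta$ for the bound on $\rho$. The only place where you are slightly loose is the initial comparison for $\theta>\theta_\delta$: since $Q_\infty^\delta$ decays, ``decay/positivity of $Q_\infty^\delta$'' does not by itself give $CM_T Q_\infty^\delta\geq n_0$ there; the paper instead also takes $\theta_\delta$ large enough that $n_0$ vanishes for $\theta\geq\theta_\delta$, which is available thanks to the support condition~\eqref{eq:n_0} and fixes this point immediately.
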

\begin{proof}

To find a super-solution, define, for $\theta_\delta$ to be chosen later, the function 
\begin{equation*}
	\psi  := \max \left( 1 , \frac{M_T}{\min_{[\underline\theta,\theta_\delta]} Q_\infty^\delta}\right) \, Q_\infty^\delta,
	\quad\text{ on }  \R\times \Theta.
\end{equation*}
We have $n \leq \psi$ on $[0,T] \times \R \times [\underline\theta,\theta_\delta]$ by construction. It satisfies
\begin{equation*}
\psi_t  - \theta \psi_{\xi\xi} - \psi_{\theta\theta} - \left( 1 - m \right)\psi  = \left( - \gamma_\infty^\delta + \delta m \right) \psi, \quad \text{ on } [0,T] \times \R \times \Theta.
\end{equation*}
Define $\theta_\delta$ such that $\theta_\delta \geq m^{-1}\left(\delta^{-1}  \gamma_\infty^\delta \right)$ and $n_0(x,\theta) = 0$ for $\theta \geq \theta_\delta$. This definition is possible by \Cref{hyp:m}. The function $\psi$ is then a super-solution of the linearized problem on $[0,T] \times \R \times [\theta_\delta,+\infty)$. Hence the comparison principle implies that $\psi \geq n$ on $[0,T] \times \R \times \Theta$. This finishes the proof of the first claim.  The second claim follows by simply integrating the inequality in $\theta$.
\end{proof}

With \Cref{lem:tailsevol}, we are now in a position to prove the $L^\infty$ bound on $n$~(Proposition~\ref{prop:uniform_upper_bound}).


\begin{proof}
We recall the notation that $M_T = \sup_{t\in[0,T]} \|n(t,\cdot,\cdot)\|_{L^\infty\left(\R\times\Theta\right)}$ and point out that $M_T$ must be finite since a basic upper bound for the equation is given by the super-solution $e^t M_0$.  Hence, we have that, at worst, $M_T \leq e^T M_0$.  Our goal is to obtain a bound on $M_T$ independent of $T$.

A consequence of \Cref{lem:tailsevol} is that the supremum of $n$ can only be approached by points $(t,x,\theta)$ with $\theta$ sufficiently small.  Hence, by parabolic regularity and translation, we may assume that $M_T$ is achieved at some point $(t_T, x_T, \theta_T)$ with $t_T \in[0,T]$, similarly to~\cite[Section~2]{Turanova} and~\cite[Section~7]{BerestyckiMouhotRaoul}.  In addition, we assume that $\theta_T + 4 > \underline\theta$ in order to avoid complications due to the boundary; however, such complications may be easily dealt with by a simple reflection procedure outlined in~\cite{BerestyckiMouhotRaoul,Turanova}.

We assume without loss of generality that $t_T \geq 5$.  Then 
\begin{equation*}
	0 \leq n_t - \theta_T n_{xx} - n_{\theta\theta}
		= n(1 - m(\theta_T) - \rho) \leq n(1 - \rho).
\end{equation*}
at the point $(t_T,x_T,\theta_T)$, since this is the location of a maximum. Thus, $\rho(t_T,x_T) \leq 1$.

Fix any $p \in (1,\infty)$, and local parabolic regularity results, see e.g.~\cite[Theorem 7.22]{Lieberman}, give
\[
	\|n\|_{W^{1,2}_p\left( \mathcal{C}_1(z_T) \right)}
		\leq C\left( \|n\|_{L^p\left( \mathcal{C}_2(z_T) \right)} + \|n(1 - m -\rho)\|_{L^p\left( \mathcal{C}_2(z_T) \right)}\right)
		\leq C(M_T + M_T^2),
\]
where we used~\Cref{lem:tailsevol} to bound $\rho$.  We point out that the constant $C$, above, depends only on $p$ and $m$.  
With $p$ large enough, we obtain via Sobolev embedding that for any $\delta > 0$,
\[
	[n]_{C^{(1+\delta)/2,1+\delta}(\mathcal{C}_1)} \leq C (M_T + M_T^2).
\]
Applying the Gagliardo-Nirenberg interpolation inequality to $\theta \mapsto n \left( t_T,x_T, \cdot \right)$, we obtain
\begin{equation*}
\| n \left( t_T,x_T, \cdot \right) \|_{L^\infty_\theta\left(\mathcal{C}_1(\theta_T)\right)} \leq C \| n \left( t_T,x_T, \cdot \right) \|_{L^1_\theta\left(\mathcal{C}_1(\theta_T)\right)}^{\frac{1+\delta}{2+\delta}} \left[ n \left( t_T,x_T, \cdot \right) \right]_{C^{1+\delta}_\theta\left(\mathcal{C}_1(\theta_T)\right)}^{\frac{1}{2+\delta}}.
\end{equation*}
Since $\| n \left( t_T,x_T, \cdot \right) \|_{L^\infty_\theta\left(\mathcal{C}_1(\theta_T)\right)} = M_T$ and $\| n \left( t_T,x_T, \cdot \right) \|_{L^1_\theta\left(\mathcal{C}_1(\theta_T)\right)} \leq \rho(t_T,x_T) \leq 1$, we obtain that
\begin{equation*}
M_T \leq C \left( M_T + M_T^2 \right)^{\frac{1}{2+\delta}}.
\end{equation*}
This clearly gives a bound on $M_T$ since $2/(2+\delta) < 1$.  The bound on $\rho$ follows from the combination of this bound and \Cref{lem:tailsevol}.
\end{proof}

\subsection{Comparing $\rho$ and $n$ via a local-in-time Harnack inequality}

With \Cref{lem:tailsevol} and Proposition~\ref{prop:uniform_upper_bound} in hand, we may now state the following Harnack inequality which allows us to compare solutions of the local and nonlocal problems.
\begin{proof}[{\bf Proof of \Cref{lem:harnack}}]
First, using~\Cref{lem:tailsevol} along with the uniform bound from Proposition~\ref{prop:uniform_upper_bound}, we note that $n \leq CMQ_\infty^\delta$.  Fix $R_1 \geq R$ such that $\int_{R_1}^\infty CMQ_\infty^\delta d\theta\leq \epsilon/2$.  Now, by arguing as in~\cite[Theorem~2.6]{AlfaroBerestyckiRaoul}, we may find $C_{R_1,\epsilon,t_0}$ such that, for all $t\geq t_0$,
\[
	\sup_{|x-x_0|, \theta-\underline\theta < R_1} n(t,x,\theta)
		\leq C_{R_1,\epsilon,t_0} \inf_{|x-x_0|, \theta-\underline\theta < R_1} n(t,x,\theta) + \epsilon/(2R_1).
\]
Since the proof in our setting is a straightforward adaptation, we omit it.  Then,
\[\begin{split}
	\rho(t,&x_0) \leq \int_{\underline\theta}^{R_1} \sup_{|x-x_0|, \theta - \underline\theta < R_1} n(t,x,\theta)d\theta + \int_{R_1}^\infty n(t,x,\theta) d\theta\\
		&\leq R_1 C_{R_1,\epsilon} \inf_{|x-x_0|, \theta-\underline\theta < R_1} n(t,x,\theta) + \frac{\epsilon}{2} + \int_{R_1}^\infty CMQ_\infty^\delta d\theta
		\leq R_1 C_{R_1,\epsilon} \inf_{|x-x_0|, \theta-\underline\theta < R} n(t,x,\theta) + \epsilon.
\end{split}\]
This concludes the proof.
\end{proof}

\appendix

\numberwithin{equation}{section}

\section{Appendix: Applying the results of Li-Yau}\label{sec:appendix}

\subsubsection*{Obtaining the bound used in~\Cref{lem:Li_Yau}}

In this section, we briefly describe how to apply the heat kernel bounds of Li-Yau.  To begin, we compute the scalar curvature of $\R\times \Theta$ endowed with the metric $g$ given by~\eqref{eq:metric}.  The scalar curvature, $R$, is defined to be
\[
	R = g^{ij} \left( \partial_k \Gamma_{ij}^k - \partial_j \Gamma_{ik}^j + \Gamma_{ij}^\ell\Gamma_{k\ell}^k - \Gamma_{ik}^\ell\Gamma_{j\ell}^k\right),
\]
where we use Einstein summation notation, i.e.~repeated indices are implicitly summed over.  Here $g^{ij}$ is the $(i,j)$ entry in $g^{-1}$ and $\Gamma_{ab}^c$ is the Christoffel symbol given by the formula
\[
	\Gamma_{ab}^c = \frac{1}{2} g^{ck} \left( \partial_b g_{ka} + \partial_a g_{kb} - \partial_k g_{ab}\right).
\]
It is straightforward to compute that $\Gamma_{11}^1 = \Gamma_{22}^i = \Gamma_{12}^2 = \Gamma_{21}^2 = 0$, that $\Gamma_{12}^1 = \Gamma_{21}^1 = - 1/2\theta$ and that $\Gamma_{11}^2 = 1/2\theta^2$.  Plugging this into the equation for $R$, we see that $R = -2/\theta^2$.  For surfaces, the Ricci and scalar curvatures are equivalent up to a multiplicative factor.  Hence, the above computations bound the Ricci curvature from below.
Hence, $R$ is bounded uniformly below by a constant $-\underline R$, where we set $\underline R = 2/\underline\theta^2$.

We now show how to obtain \Cref{lem:Li_Yau} from the results of \cite{LiYau}.  In our setting, the statement of~\cite[Corollary~3.2]{LiYau}\footnote{Strictly speaking, this result is only valid for a complete Riemannian manifold without boundary.  See below for a discussion of the adaptations to the proof to obtain the result in our setting.} 
is that, for any $\bar a >0$, 
there exists a positive constant $C_0$, depending only on $\|m'\|_\infty$, $\|m''\|_\infty$, $\underline R$, and $\bar a$, such that
\begin{equation}\label{eq:prelim_heat_kernel_bound}
	G(t,x,\theta, 0, \underline\theta)
		\leq \frac{C_0}{\sqrt{\Vol(S_{\bar a t}(t,x,\theta)) \Vol(S_{\bar a t}(t,0,\underline\theta))}} \exp \left\{ C_0t - \frac{4}{5} \zeta(t,x,\theta) \right\}.
\end{equation}
Here, for any $z$, $\eta$, and $s$, $S_{a}(s,z,\eta) := \left\{(z',\eta') \in \R\times \overline{\Theta} : \zeta(s,z,\eta,z',\eta') \leq a \right\}$.  To arrive at this, we take, in their notation, $\epsilon = 1/4$, $\alpha = 2$, $\theta = \max\{\|m'\|_\infty, \|m''\|_\infty\}$, and $a = \bar at$. 
 In addition, what we refer to as $\zeta$, $G$, $\underline R$, and $m$, they refer to as $\rho$, $H$, $K$, and $q$, respectively.  To finish, we need only show that $\Vol(S_{\bar a t}(x,\theta,t))\Vol(S_{\bar a t}(0,\underline\theta,t))$ is uniformly bounded from below.

\begin{lemma}\label{lem:volume}
There exists $\bar a$ and $t_0 > 0$ such that for any $t>t_0$, $x\in\R$ and $\theta \leq \eta_{\gamma_\infty+1}(t)$, $
\Vol(S_{\bar a t}(t,x,\theta))$ and $\Vol(S_{\bar a t}(t, 0,\underline\theta))$
are bounded away from zero.
\end{lemma}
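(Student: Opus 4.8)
The plan is to show that each set $S_{\bar a t}(x,\theta,t)$ contains a small Riemannian ball of radius independent of $t$, which forces its volume to be bounded below. First I would note that, by definition, $(z',\eta') \in S_{\bar a t}(x,\theta,t)$ as soon as $\zeta(t,z',\eta',x,\theta) \leq \bar a t$, and $\zeta$ is the minimal action of a path joining $(x,\theta)$ to $(z',\eta')$ in time $t$. Taking the \emph{constant} path $Z(s) \equiv (x,\theta)$, which is admissible, gives the crude upper bound $\zeta(t,z',\eta',x,\theta) \leq t\, m(\theta)$ when $(z',\eta') = (x,\theta)$, and more generally, for a short straight-line path that moves from $(x,\theta)$ to $(z',\eta')$ and then sits still, one gets
\[
	\zeta(t, z', \eta', x, \theta) \leq \frac{C}{t}\left( \frac{|z'-x|^2}{\theta} + |\eta'-\theta|^2 \right) + t \max_{s\in[0,t]} m(Z_2(s)).
\]
Since we restrict to $\theta \leq \eta_{\gamma_\infty+1}(t)$, the definition of $\eta_{\gamma_\infty+1}$ and the bound $\Phi(\theta) \leq \theta\sqrt{m(\theta)} \leq D_m \Phi(\theta)$ from~\eqref{eq:phi_to_m} give $m(\theta) \leq C$ for $\theta$ in a fixed neighbourhood of $\eta_{\gamma_\infty+1}(t)$ scaled appropriately — more precisely $m(\eta_{\gamma_\infty+1}(t)) = O(1)$ is false, but $\Phi(\eta_{\gamma_\infty+1}(t)) = (\gamma_\infty+1)t$, so $m(\eta_{\gamma_\infty+1}(t)) \leq C (\gamma_\infty+1)^2 t^2 / \eta_{\gamma_\infty+1}(t)^2$, which combined with $\eta_{\gamma_\infty+1}(t) \gg t^{2/3}$ for large $t$ shows $m(\theta) \leq C t^2/\eta_{\gamma_\infty+1}(t)^2 = o(t^{2/3})$; in particular $t\, m(\theta) = o(t)$ for $\theta$ in the relevant range near the endpoint, and bounded by $\bar a t/2$ once $\bar a$ is chosen large enough.

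With this in hand, I would choose a fixed radius $\delta_0>0$ (independent of $t$) and a threshold time $t_0$ so that, for all $t \geq t_0$, every point $(z',\eta')$ with $\frac{|z'-x|^2}{\theta} + |\eta'-\theta|^2 \leq \delta_0^2$ — staying inside $\Theta$, using a reflection at $\underline\theta$ if needed — satisfies $\zeta(t,z',\eta',x,\theta) \leq \bar a t$, hence lies in $S_{\bar a t}(x,\theta,t)$. The set $\{\frac{|z'-x|^2}{\theta} + |\eta'-\theta|^2 \leq \delta_0^2\}$ is exactly a coordinate ellipse adapted to the metric $g$ of~\eqref{eq:metric}, so its $g$-volume is $\int \sqrt{\det g}\, dz' d\eta' = \int \theta^{-1/2} dz' d\eta'$ over that ellipse, which is a fixed positive number independent of $t$ (the $\theta^{-1/2}$ Jacobian cancels against the $\sqrt\theta$ stretching of the ellipse in the $x$-direction, up to harmless factors since $\theta$ is nearly constant over an ellipse of bounded size). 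This yields $\Vol(S_{\bar a t}(x,\theta,t)) \geq c_0 > 0$. The bound for $\Vol(S_{\bar a t}(0,\underline\theta,t))$ is the same argument with $(x,\theta) = (0,\underline\theta)$, where $m(\underline\theta) = 0$ makes the mortality contribution trivially zero.

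The main obstacle is the bookkeeping needed to verify that $m$ is genuinely small along admissible short paths in the regime $\theta \leq \eta_{\gamma_\infty+1}(t)$: one must be careful that a path staying within a $g$-ball of fixed radius around $(x,\theta)$ never wanders into a region where $m$ is so large that the action exceeds $\bar a t$, and this requires using both the monotonicity of $m$ from~\Cref{hyp:m} and the quantitative control $m(\eta_{\gamma_\infty+1}(t)) t = o(t)$ coming from~\eqref{eq:phi_to_m} and the super-polynomial growth $\eta_{\gamma_\infty+1}(t) \gg t^{2/3}$. Once $\bar a$ is taken large enough relative to these constants and $t_0$ correspondingly large, the ellipse of radius $\delta_0$ is contained in $S_{\bar a t}$ and the volume lower bound follows immediately; combined with~\eqref{eq:prelim_heat_kernel_bound} this produces the stated bound $G(t,x,\theta,0,\underline\theta) \leq C \exp\{Ct - \zeta(t,x,\theta)/2\}$ of \Cref{lem:Li_Yau} after relabelling $\tfrac45 \zeta \geq \tfrac12 \zeta$.
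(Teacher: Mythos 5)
There is a genuine gap at the central step of your argument. You place the candidate ball around the point $(x,\theta)$ itself and claim that the cost of ``sitting still'' near height $\theta$ is affordable, concluding that $t\,m(\theta)\leq \bar a t/2$ for a fixed $\bar a$ whenever $\theta\leq \eta_{\gamma_\infty+1}(t)$. This is false, and the intermediate assertion ``$t\,m(\theta)=o(t)$'' does not follow from your own estimate: from $\Phi(\eta_{\gamma_\infty+1}(t))=(\gamma_\infty+1)t$ and \eqref{eq:phi_to_m} one gets $m(\theta)\leq C\,t^2/\eta_{\gamma_\infty+1}(t)^2$, hence $t\,m(\theta)\leq C\,t^3/\eta_{\gamma_\infty+1}(t)^2$, and since $\eta_{\gamma_\infty+1}(t)\ll t$ (sublinearity of $m$ forces $\eta$ to grow sublinearly, e.g.\ $m(\theta)\sim\theta^p$ gives $\eta_{\gamma_\infty+1}(t)\sim t^{2/(2+p)}$ and $t\,m(\eta_{\gamma_\infty+1}(t))\sim t^{(2+3p)/(2+p)}$), this is \emph{superlinear} in $t$, not $o(t)$. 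Consequently no fixed $\bar a$ can absorb the mortality cost of a trajectory that spends a time of order $t$ at heights comparable to $\eta_{\gamma_\infty+1}(t)$, and the containment of a fixed-size $g$-ellipse centred at $(x,\theta)$ in $S_{\bar a t}(x,\theta,t)$ does not follow. (Your volume computation for such an ellipse, with the $\theta^{-1/2}$ density cancelling the $\sqrt\theta$ stretching, is fine; the problem is purely the containment.)

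The paper circumvents exactly this obstruction by not asking the path to stay near $(x,\theta)$: it builds a trajectory whose trait component descends along $\dot Z_2=-2\sqrt{m(Z_2)}$ from $\theta$ down to a \emph{fixed} level $\theta'\in(\theta_1,\theta_1+1)$, which costs only $\int_{\theta'}^{\theta}\sqrt{m}\,ds\leq \Phi(\eta_{\gamma_\infty+1}(t))=(\gamma_\infty+1)t$ and takes a time $s_t\leq t$ once $\theta_1$ is large; the path then waits at $\theta'$, where the mortality is bounded by $m(\theta_1+1)$, so the total action is at most $\bigl(m(\theta_1+1)+\gamma_\infty+1\bigr)t$ plus the horizontal cost $(x'-x)^2/(4\underline\theta t)\leq t/(4\underline\theta)$. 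This shows that the whole low-trait rectangle $(x-t,x+t)\times(\theta_1,\theta_1+1)$ lies in $S_{\bar a t}(x,\theta,t)$, whence the volume bound. Your strategy could be repaired along the same lines (descend, wait, and if one insists on a neighbourhood of a prescribed endpoint near $(x,\theta)$, climb back up along $\dot Z_2=+2\sqrt{m(Z_2)}$ at an additional cost $\leq(\gamma_\infty+1)t$), but that repair is precisely the paper's construction; as written, the proposal's key inequality fails.
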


\begin{proof}
Let $B_1(0,\underline \theta) = \{(x,\theta) \in \R\times\Theta: x^2 + (\theta-\underline\theta)^2 < 1\}$.  For any $\overline a \geq 1$ and any $t$ sufficiently large, we have that%
\footnote{To see this fix any $(x,\theta) \in B_1(0,\underline\theta)$ and define $Z(s) = (x\max\{1-s,0\}, \max\{\underline\theta, \theta(1-s) + s\underline\theta\})$.  Notice that $\int_0^t (\dot Z_1^2/4Z_2 + \dot Z_2^2/4 + m(Z_2)) ds \leq x^2/4\underline \theta + (\theta-\underline\theta)^2/4 + \max_{B_1(0,\underline\theta)}m \ll \overline a t$.}
 $B_1(0,\underline \theta) \subset S_{\bar a t} (t,0,\underline \theta)$.  Hence we have that $\Vol(S_{\bar a t}(t,0,\underline \theta)) \geq \Vol(B_1(0,\underline \theta))$. Since $\Vol(B_1(0,\underline \theta)) > 0$, $\Vol(S_{\bar a t}(t,0,\underline\theta))$ is uniformly bounded from below for all $t$ sufficiently large.



We now obtain the bound on $\Vol(S_{\bar a t}(t,x,\theta))$.  Fix $\theta_1$ to be determined.  If $\theta \in [\underline\theta, \theta_1 + 1)$, then we argue as in the previous paragraph to obtain a lower bound on $\Vol(S_{\bar a t}(t,x,\theta))$.  Hence, we may assume that $\theta > \theta_1+1$.  The claim now follows by showing that $(x-t,x+t)\times (\theta_1,\theta_1+1) \subset S_{\bar a t}(t,x,\theta)$, for $\theta_1$ sufficiently large, though independent of $t$.


Indeed, fix $(x',\theta')\in (x-t,x+t)\times(\theta_1,\theta_1+1)$, and let $Z(s) = (Z_1(s), Z_2(s))$ where 
\begin{equation*}
Z_1(s) = x' +\frac{s}{t}(x-x'),
\end{equation*}
and $Z_2$ solves 
\begin{equation*}
Z_2(s) = \theta', \quad \text{ if } s \in [0,s_t]
\qquad \text{and} \qquad
\begin{cases}
\dot Z_2 =  2 \sqrt{m(Z_2(s))},\\
Z_2(t)=\theta, \quad Z_2(s_t) = \theta',
\end{cases}
\text{ if } s\in[s_t,t].
\end{equation*}
Such a trajectory is reasonable since it solves the Euler-Lagrange equations on $[s_t,t]$. However, it is not the optimal trajectory in $\theta$ since we cannot ensure that $s_t = t$, but this is not a problem for our purposes. 
We note that $s_t\geq 0$ exists since
\begin{equation}
\begin{split}
	t-s_t = \int_{s_t}^t \frac{\dot Z_2}{2\sqrt{m(Z_2)}} ds 
		= \int_{\theta'}^\theta \frac{d\eta}{2\sqrt{m(\eta)}}
		\leq\int_{0}^\theta \frac{\sqrt{m(s)} }{2m(\theta_1)} \, ds
		\leq \frac{\Phi(\eta_{\gamma_\infty+1}(t))}{2 m(\theta_1)} \leq \frac{(\gamma_\infty+1)t}{2m(\theta_1)} \leq t,
\end{split}
\end{equation}
where we have used the restriction $\theta \leq \eta_{\gamma_\infty+1}(t)$ and where the last inequality follows by possibly increasing $\theta_1$.
We also used that $Z_2$ is increasing on $[s_t,t]$.

We now estimate $\zeta$ to show that $(x',\theta') \in S_{\overline a t}(t,x,\theta)$.  Indeed, 
\begin{equation}\label{eq:appendix1}
\begin{split}
\zeta&(t,x,\theta,x',\theta')
	\leq \int_0^t \left[\frac{\dot Z_1^2}{4 Z_2} + \frac{\dot Z_2^2}{4} + m(Z_2) \right] ds\\
	&\leq \int_{0}^t \frac{1}{4 \underline\theta} \left( \frac{x'-x}{t} \right)^2 ds + \int_0^{t} \left[ \frac{\dot Z_2^2}{4} + m(Z_2) \right] ds
	\leq \frac{t}{4 \underline\theta} + \int_0^{t} \left[ \frac{\dot Z_2^2}{4} + m(Z_2) \right] ds.
\end{split}
\end{equation}

It remains to estimate the second term in~\eqref{eq:appendix1}:
\begin{multline*}
\int_0^{t} \Big[ \frac{\dot Z_2^2}{4} + m(Z_2) \Big] ds
	= \int_0^{s_t} m(\theta') \, ds + 2\int_{s_t}^{t} m(Z_2) \,ds
	= \int_0^{s_t} m(\theta') \,ds + \int_0^{s_t} \dot Z_2 \sqrt{m(Z_2)} \, ds\\
	= s_t m(\theta') + \int_{\theta'}^{\theta} \sqrt{m(s)} ds
	\leq m(\theta_1 + 1) t + \Phi( \eta_{\gamma_\infty}(t))
	= \left( m(\theta_1 + 1) + \gamma_\infty + 1\right)t.
\end{multline*}
Choosing $\bar a = m(\theta_1+1) + \gamma_\infty + 1 + (4\underline \theta)^{-1}$,
%
we conclude that $(x',\theta') \subset S_{\overline a t}(t,x,\theta)$.  Hence, $(x-t,x+t) \times (\theta_1, \theta_1 +1)\subset S_{\bar a t}(t,x,\theta)$, implying that $\Vol(S_{\bar a t}(t,x,\theta)) \geq O(1)$.
\end{proof}

Plugging this into~\eqref{eq:prelim_heat_kernel_bound} yields the bound in \Cref{lem:Li_Yau}.

\subsubsection*{Discussion of the role of the boundary in \cite[Corollary 3.2]{LiYau}}

While the effect of having a boundary is quite well-understood (see, for example, \cite{Wang}), since the analogue of  \cite[Corollary 3.2]{LiYau} is not explicitly stated in~\cite{Wang}, we briefly outline how to modify the arguments of Li and Yau to obtain it in our setting. In this discussion only, we adopt their notation (the notable changes are that $n$ is denoted $u$, $m$ is denoted $q$, and $\zeta$ is denoted $\rho$).

We begin by noticing that, since we are seeking an upper bound on the propagation in terms of $\int_{\underline\theta}^\theta q(s)ds$, we may assume, without loss of generality, that $q_\theta(\underline\theta) = 0$.  Indeed, if this is not the case, we may replace $q$ with $\tilde q = \chi q$ where $\chi$ is any smooth function satisfying $\chi(\theta) = 1$ if $\theta > \underline \theta + 1$, $\chi(\theta) \in(0,1]$ if $\theta > \underline\theta$, and $\chi(\underline\theta) = 0$.  Since this only {\em increases} the 0th order coefficient $(1-q)$, an upper bound for solutions of the equation with $\tilde q$ in place of $q$ provides an upper bound for solutions of the equation with $q$.  Further, $\tilde q_\theta(\underline\theta) = 0$ and $\int_{\underline\theta}^\theta \tilde q(s) ds = \int_{\underline\theta}^\theta q(s) ds + O(1)$.

The first step in obtaining their Corollary 3.2 in our setting is their Theorem 1.3, which gives a differential inequality for $u$ when there is no boundary.  The proof follows from the work in their Theorem 1.2, which we discuss now.  The key idea is to define a function $F$ in terms of $\log u$ and apply the maximum principle to it.  If the maximum of $F$ is in the interior of the manifold, their argument applies directly;  that is, after choosing a parameter $\alpha>0$ carefully, the upper bound comes directly from the fact that, at the maximum, $F_t - \Delta F \leq 0$, along with some computations.  On the other hand, we rule out the maximum occuring on the boundary with the following proof by contradiction.  The Hopf maximum principle implies that, if the maximum were on the boundary, $\partial_n F > 0$, where $n$ is the outward normal vector.  Then, following the computation in their Theorem 1.1, we obtain $\partial_n F = -2 II(\nabla \log u, \nabla \log u) - \alpha t\partial_n q$, where $II$ is the second fundamental form.  This is the equation below (1.6) in~\cite{LiYau}, where the $\partial_n q$ term is an additional term arising in our setting.   In view of paragraph above, $\partial_n q=0$.   Also, as in their setting, we observe that $II\geq 0$.  To see this, we point out that our domain is geodesically convex, i.e.~all geodesics remain within the domain.  This is a consequence of~\cite[Lemma A.2.(iii)]{HendersonPerthameSouganidis}, which shows that the geodesics, denoted $\gamma$, have positive second coordinate; heuristically, it is true since the metric $g$ rewards paths with large $\theta$.  A consequence of the geodesic convexity is that $II \geq 0$.  We conclude that $\partial_n F \leq 0$, which is a contradiction.  Hence, the conclusion of Theorem 1.3 holds in our setting.

The second step is in obtaining Theorem 2.2, a Harnack inequality, from Theorem 1.3.  Since the proof does not ``see'' the boundary and uses only Theorem 1.3, which we have outlined how to obtain in our setting, it follows that Theorem 2.2 holds in our setting as well.

The third step is to obtain Lemma 3.1, an equation for the action $\zeta$.  This is standard in the Hamilton-Jacobi and physics literature.  Since $q_\theta = 0$ and since the metric $g$ rewards paths with larger $\theta$, it is easy to check that any minimizing path of the action, $\zeta$, does not touch the boundary $\R\times\{\underline\theta\}$.  Hence, the standard arguments apply and the identity in Lemma 3.1 holds.

From here, they use Lemma 3.1 to obtain Lemma 3.2.  Then they apply all the above-mentioned results to obtain Theorem 3.1, from which the result that we use, Corollary 3.2, follows.  In each of these steps, the boundary plays no role.  Hence, the conclusion of Corollary 3.2 holds in our setting.

\bibliographystyle{abbrv}
\bibliography{refs-tradeoff}

\end{document}